\documentclass[12pt,twoside]{amsart}
\usepackage{amsmath}
\usepackage{amsthm}
\usepackage{amsfonts}
\usepackage{amssymb}
\usepackage{latexsym}
\usepackage[all]{xy}
\usepackage{extarrows}

\date{}
\allowdisplaybreaks[4] \footskip=15pt
\renewcommand{\uppercasenonmath}[1]{}

\numberwithin{equation}{section} \theoremstyle{plain}
\newtheorem*{thm*}{Main Theorem}
\newtheorem{thm}{Theorem}[section]
\newtheorem{cor}[thm]{Corollary}
\newtheorem*{cor*}{Corollary}
\newtheorem{lem}[thm]{Lemma}
\newtheorem*{lem*}{Lemma}
\newtheorem{prop}[thm]{Proposition}
\newtheorem*{prop*}{Proposition}
\newtheorem{rem}[thm]{Remark}
\newtheorem*{rem*}{Remark}

\newtheorem*{exa*}{Example}
\newtheorem{df}[thm]{Definition}
\newtheorem*{df*}{Definition}

\newtheorem*{conj*}{Conjecture}
\newtheorem*{ack*}{ACKNOWLEDGEMENTS}

\newcommand{\pf}{\noindent\begin {proof}}
\newcommand{\epf}{\end{proof}}

\newcommand{\Ext}{\mbox{\rm Ext}}

\newcommand{\Hom}{\mbox{\rm Hom}}
\newcommand{\Tor}{\mbox{\rm Tor}}
\newcommand{\im}{\mbox{\rm im}}
\newcommand{\coker}{\mbox{\rm coker}}
\begin{document}
\begin{center}
{\bf Homological aspects of $n$-trivial extensions of rings}

\vspace{0.5cm} Lixin Mao\\
%\bigskip
School of Mathematics and Physics, Nanjing Institute of Technology,\\ Nanjing 211167, China\\
E-mail: maolx2@hotmail.com \\
\end{center}
%\begin{figure}[b]
%\rule[-2.5truemm]{5cm}{0.1truemm}\\[2mm]
%{ }
%\end{figure}

%\begin{figure}[b]
%\rule[-2.5truemm]{5cm}{0.1truemm}\\[2mm]
%{ }
%\end{figure}
\bigskip
\centerline { \bf  Abstract}
 \bigskip
\leftskip10truemm \rightskip10truemm
 \noindent
Let $R\ltimes_{n}M$ be the $n$-trivial extension of a ring $R$ by an $R$-bimodule $M$. We first characterize projective, injective, flat  and finitely generated modules over $R\ltimes_{n}M$. Then we describe when the ring $R\ltimes_{n}M$ is left perfect (Noetherian, coherent, Artinian, hereditary, Kasch). Finally, we establish some homological formulas of $n$-trivial extensions of rings.\\
\vbox to 0.3cm{}\\
{\it Key Words:} $n$-trivial extension; projective module; injective module.\\
{\it 2020 Mathematics Subject Classification:} 16D20; 16D40; 16D50.

\leftskip0truemm \rightskip0truemm
\bigskip
\section { \bf Introduction}
Let $R$ be an associative ring and $M$ an $R$-bimodule. Then the Cartesian product $R \times M$, with the natural addition and the multiplication given by $(r_{1}, m_{1})(r_{2}, m_{2}) = (r_{1}r_{2}, r_{1}m_{2}+m_{1}r_{2})$, becomes a ring. This ring is called the \emph{trivial  extension} of the ring $R$ by the bimodule $M$, and denoted
by $R\ltimes M$. The notion of trivial extension of a ring by a bimodule is an important extension of rings and has played a crucial role in ring theory and homological algebra \cite{DMZ, FGR, PR, R}. In \cite{ABFS}, Anderson, Bennis, Fahid and  Shaiea introduced the concept of $n$-trivial extension of rings for a family $(M_{i})_{i=1}^{n}$ of $R$-bimodules, denoted by $R\ltimes_{n} M_{1}\ltimes \cdots\ltimes M_{n}$ and  obtained many ring-theoretic properties of this kind of rings. Later, Benkhadra, Bennis and Garc\'{\i}a Rozas studied some basic module-theoretic properties of $R\ltimes_{n} M_{1}\ltimes \cdots\ltimes M_{n}$ \cite{BBG}.

In the present paper, we will focus on a kind of special $n$-trivial extension $R\ltimes_{n}M$ for an $R$-bimodule $M$ (see Definition \ref{df: 2.1}), which is not only a generalization of  the classical trivial extension  $R\ltimes M$, but also a quotient ring of the tensor ring $T_{R}(M)$ in the sense of \cite{Co}. Such an $n$-trivial extension $R\ltimes_{n}M$ admits concise and interesting homological properties as well as classical trivial  extensions \cite{M}.

In Section 2,  we first give the equivalent category $\Omega$ of $R\ltimes_{n}M$-Mod (see Proposition \ref{prop: 2.4}). Then we characterize  projective, injective and flat modules over $R\ltimes_{n}M$ (see Theorems \ref{thm: 2.8}, \ref{thm: 2.9} and \ref{thm: 2.10}). Finally, we describe when a left $R\ltimes_{n}M$-module is finitely generated or finitely presented (see Theorem \ref{thm: 2.12}). In Section 3, we characterize when the ring $R\ltimes_{n}M$ is a left perfect (Noetherian, coherent, Artinian, hereditary, self-injective, Kasch) ring (see Theorems \ref{thm: 3.1}, \ref{thm: 3.2} and \ref{thm: 3.4}). Section 4 is devoted to several isomorphism formulas of homological groups over $R\ltimes_{n}M$ (see Theorems \ref{thm: 4.2}, \ref{thm: 4.5} and \ref{thm: 4.8}). As an application, some estimates of homological dimensions of modules over $R\ltimes_{n}M$ are established (see Corollaries \ref{cor: 4.3}, \ref{cor: 4.6} and \ref{cor: 4.9}).

Throughout this paper, all rings are nonzero associative rings with identity and all modules are unitary. We write $R$-Mod (resp. Mod-$R)$  for the category of left (resp. right) $R$-modules. $_RX$ (resp. $X_{R}$) denotes a left (resp. right) $R$-module. $pd(_{R}X)$, $id(_{R}X)$ and $fd(_{R}X)$ denote the projective, injective and flat dimensions of $_{R}X$ respectively.
\bigskip
\section {\bf Modules over $n$-trivial extensions of rings}
\bigskip
\begin{df}\label{df: 2.1} {\rm Let $M$ be an $R$-bimodule and $n\geq 1$. Define a multiplication in the direct sum $\coprod_{i=0}^{n}M^{\otimes_{R}i}$ of Abelian groups,
where $M^{\otimes_{R}0} = R$, $M^{\otimes_{R}1}=M$ and $M^{\otimes_{R}i+1} =  M\otimes_{R}M^{\otimes_{R}i}$, by
$(m_{0}, ..., m_{n})(m_{0}^{'}, ..., m_{n}^{'}) = (\sum_{i+j=k} m_{i}\otimes m_{j}^{'})_{0\leq k\leq n}$. Then $\coprod_{i=0}^{n}M^{\otimes_{R}i}$ becomes a ring. This ring is called an \emph{$n$-trivial extension} of the ring $R$ by $M$, denoted by $R\ltimes_{n} M$.}
\end{df}
\begin{rem}\label{rem: 2.2}{\rm (1) Notice that $R\ltimes_{1} M$ is nothing but the classical trivial extension $R\ltimes M$.

(2) Recall that a \emph{tensor ring} of an $R$-bimodule $M$ is $T_{R}(M) = \coprod_{i=0}^{\infty}M^{\otimes_{R}i}$ \cite{Co}. It is easy to verify that there is a ring isomorphism $R\ltimes_{n} M\cong T_{R}(M)/\coprod_{i=n+1}^{\infty}M^{\otimes_{R}i}$. In particular, if $M^{\otimes_{R}n+1}=0$, then $R\ltimes_{n} M\cong T_{R}(M)$.}
\end{rem}
In order to investigate modules over $R\ltimes_{n} M$, we introduce the following category $\Omega$: whose objects are couples $(X, f)$ with $X$ a left $R$-module and $f\in\Hom_{R}(M\otimes_{R}X, X)$ such that the composition of $$M^{\otimes_{R}n+1}\otimes_{R} X\stackrel{M^{\otimes_{R}n}\otimes f}\longrightarrow M^{\otimes_{R}n}\otimes_{R} X\longrightarrow\cdots\longrightarrow  M\otimes_{R}  M\otimes_{R} X\stackrel{ M\otimes f}\longrightarrow  M\otimes_{R} X\stackrel{f}\longrightarrow X$$ is zero and a morphism $(X,f) \rightarrow (Y,g)$ is  $\gamma\in\Hom_{R}(X, Y)$ such that the following diagram commutes. $$\xymatrix{M\otimes_{R}X\ar[d]_{f}\ar[r]^{M\otimes\gamma}&M\otimes_{R}Y\ar[d]_{g}\\X\ar[r]^{\gamma}&Y}$$
A sequence $(X_{1},f_{1})\stackrel{\gamma_{1}}\longrightarrow (X_{2},f_{2})\stackrel{\gamma_{2}}\longrightarrow(X_{3},f_{3})$ in $\Omega$ is exact if and only if the underlying sequence $X_{1}\stackrel{\gamma_{1}}\longrightarrow X_{2}\stackrel{\gamma_{2}}\longrightarrow X_{3}$ in $R$-Mod is exact.

In view of the adjointness relation, the category $\Omega$ is equivalent  to the category $\Upsilon$: whose objects are couples $[Y,g]$ with $Y$ a left $R$-module and $g\in\Hom_{R}(Y, \Hom_{R}(M,Y))$ such that the composition of {\small$$Y\stackrel{g}\longrightarrow \Hom_{R}(M,Y)\rightarrow \Hom_{R}(M,\Hom_{R}(M,Y))\rightarrow\cdots\rightarrow\Hom_{R}^{n}(M,Y)\stackrel{{\rm Hom}_{R}^{n}(M,g)}\longrightarrow  \Hom_{R}^{n+1}(M,Y)$$} is zero, where $\Hom_{R}^{0}(M,Y)=Y, \Hom_{R}^{1}(M,Y)=\Hom_{R}(M,Y), \cdots,
\Hom_{R}^{n}(M,Y)=\Hom_{R}(M,\Hom_{R}^{n-1}(M,Y))$, and a morphism $[X,f] \rightarrow [Y,g]$ is  $\varphi\in\Hom_{R}(X, Y)$ such that the following diagram commutes. $$\xymatrix{X\ar[d]_{f}\ar[rr]^{\varphi}&&Y\ar[d]_{g}\\\Hom_{R}(M,X)\ar[rr]^{{\rm Hom}_{R}(M,\varphi)}&&\Hom_{R}(M,Y)}$$
A sequence $[Y_{1},g_{1}]\stackrel{\varphi_{1}}\longrightarrow [Y_{2},g_{2}]\stackrel{\varphi_{2}}\longrightarrow [Y_{3},g_{3}]$ in $\Upsilon$ is exact if and only if the underlying sequence $Y_{1}\stackrel{\varphi_{1}}\longrightarrow Y_{2}\stackrel{\varphi_{2}}\longrightarrow Y_{3}$ in $R$-Mod is exact.

We define some functors as follows.

The functor $\textbf{T}: R$-Mod $\longrightarrow \Omega$ is given, for every object $X\in R$-Mod, by $\textbf{T}(X) =(\coprod_{i=0}^{n}(M^{\otimes_{R}i}\otimes_{R}X), \mu)$, with $\mu(x_{0}, x_{1}, x_{2},\cdots, x_{n})=(0, x_{0}, x_{1},\cdots, x_{n-1})$  and for morphisms by $\textbf{T}(\alpha)=\coprod_{i=0}^{n}(M^{\otimes_{R}i}\otimes\alpha)$.

The functor $\textbf{U}:\Omega\longrightarrow R$-Mod is given, for every object $(X,f)\in \Omega$, by $\textbf{U}(X,f) =X$ and for morphisms by $\textbf{U}(\alpha)=\alpha$. Since the category $\Upsilon$ is  equivalent  to $\Omega$, $\textbf{U}$ can also be interpreted as the functor $\textbf{U}:\Upsilon\longrightarrow R$-Mod: for every object $[Y,g]\in \Upsilon$, $\textbf{U}[Y,g] =Y$ and for morphisms, $\textbf{U}(\beta)=\beta$.

The functor $\textbf{Z}: R$-Mod $\longrightarrow \Omega$ is given, for every object $X\in R$-Mod, by $\textbf{Z}(X)=(X,0)$ and for morphisms by $\textbf{Z}(\alpha)=\alpha$. Since the category $\Upsilon$ is  equivalent  to $\Omega$, $\textbf{Z}$ can also be interpreted as the functor $\textbf{Z}: R$-Mod $\longrightarrow\Upsilon$: for every object  $Y\in R$-Mod, $\textbf{Z}(Y)=[Y,0]$ and for morphisms, $\textbf{Z}(\beta)=\beta$.

The functor $\textbf{C}:\Omega\longrightarrow R$-Mod is given, for every object $(X,f)\in \Omega$, by $\textbf{C}(X,f) =\coker(f)$ and for morphisms by $\textbf{C}(\alpha)=$ the induced morphism.

The functor $\textbf{H}: R$-Mod $\longrightarrow \Upsilon$ is given, for every object  $Y\in R$-Mod, by $\textbf{H}(Y)=[\coprod_{i=0}^{n}\Hom_{R}^{i}(M,Y),\nu]$,  with $\nu(x_{0}, x_{1}, x_{2},\cdots, x_{n})=(x_{1}, x_{2},\cdots, x_{n},0)$ and for morphisms by $\textbf{H}(\beta)=\coprod_{i=0}^{n}\Hom^{i}_{R}(M,\beta)$.

The functor $\textbf{K}: \Upsilon\longrightarrow R$-Mod is given, for every object $[Y,g]\in \Upsilon$, by $\textbf{K}[Y,g] =\ker(g)$ and for morphisms by $\textbf{K}(\beta)=$ the induced morphism.

There are analogous functors for right modules.
\begin{prop}\label{prop: 2.3} $(\textbf{T}, \textbf{U})$, $(\textbf{U}, \textbf{H})$, $(\textbf{C}, \textbf{Z})$ and $(\textbf{Z}, \textbf{K})$ are adjoint pairs  such that $\textbf{C} \textbf{T}= id_{R{\rm-Mod}}$, $\textbf{U} \textbf{Z}= id_{R{\rm-Mod}}$ and $\textbf{K} \textbf{H}= id_{R{\rm-Mod}}$. Consequently, $\textbf{C}$ and $\textbf{T}$ are right exact, $\textbf{K}$ and $\textbf{H}$ are left exact.
\end{prop}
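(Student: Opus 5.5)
The plan is to verify each adjunction directly by writing down the natural bijection of Hom-sets, and then to read off the composite identities and the exactness properties formally.

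\emph{The pair $(\textbf{T},\textbf{U})$.} Given $X\in R$-Mod and $(Y,g)\in\Omega$, send a morphism $\gamma:\textbf{T}(X)\to (Y,g)$ in $\Omega$ to its restriction $\gamma|_{X}:X\to Y$ on the degree-zero summand. Conversely, send $\alpha\in\Hom_R(X,Y)$ to $\gamma=(\gamma_i)_{0\le i\le n}$, where $\gamma_0=\alpha$ and $\gamma_i=g\circ(M\otimes g)\circ\cdots\circ(M^{\otimes i-1}\otimes g)\circ(M^{\otimes i}\otimes\alpha)$. I will check that compatibility with $\mu$ forces exactly this recursion, namely $\gamma_{i+1}=g\circ(M\otimes\gamma_i)$. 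The only thing needing care is the top degree: $\mu$ kills the component $x_n$, so I must verify $g\circ(M\otimes\gamma_n)=0$. This is precisely the defining condition that the $(n+1)$-fold composite in $\Omega$ vanishes. I expect this to be the one step that is not pure bookkeeping, and it is where the hypothesis on objects of $\Omega$ is used. I will also check that $\textbf{T}(X)$ itself lies in $\Omega$, which holds since applying $\mu$ $n+1$ times shifts everything out. Naturality in both variables is then routine.

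\emph{The pair $(\textbf{U},\textbf{H})$.} This is dual, carried out in $\Upsilon$. A morphism $\varphi:[Y,g]\to\textbf{H}(Z)$ is determined by its first component $\varphi_0:Y\to Z$, with $\varphi_{i+1}=\Hom_R(M,\varphi_i)\circ g$. The vanishing of the last component under $\nu$ again uses the $(n+1)$-fold condition defining $\Upsilon$. Since $\Upsilon\simeq\Omega$ compatibly with $\textbf{U}$, this gives the adjunction on $\Omega$ as well.

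\emph{The pairs $(\textbf{C},\textbf{Z})$ and $(\textbf{Z},\textbf{K})$.} A morphism $(X,f)\to(Y,0)$ is an $R$-map $\gamma:X\to Y$ with $\gamma f=0$, which is the same as a map $\coker f\to Y$. Dually, a morphism $[Y,0]\to[Z,g]$ is a map $\varphi:Y\to Z$ with $g\varphi=0$, which is the same as a map $Y\to\ker g$. Naturality is clear.

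\emph{The identities and exactness.} For $\textbf{C}\textbf{T}=id$, the image of $\mu$ is $\coprod_{i\ge1}M^{\otimes i}\otimes X$, so the cokernel is $X$. The identity $\textbf{U}\textbf{Z}=id$ is immediate. For $\textbf{K}\textbf{H}=id$, the kernel of $\nu$ is the degree-zero summand $Y$. Finally, left adjoints preserve colimits, so $\textbf{C}$ and $\textbf{T}$ are right exact; right adjoints preserve limits, so $\textbf{K}$ and $\textbf{H}$ are left exact. Here exactness in $\Omega$ and $\Upsilon$ is computed on underlying modules, so cokernels and kernels are the evident ones.
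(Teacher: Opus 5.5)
Your proposal is correct, and it is exactly the direct verification the paper leaves to the reader: the paper only says the proof is straightforward and points to Fossum--Griffith--Reiten, Proposition 1.3, for the case $n=1$. You also correctly identify the one non-formal point, namely that the top-component vanishing ($g\circ(M\otimes\gamma_n)=0$ for $(\textbf{T},\textbf{U})$, and dually $\Hom_R(M,\varphi_n)\circ g=0$ for $(\textbf{U},\textbf{H})$) is precisely the defining $(n+1)$-fold condition on objects of $\Omega$ and $\Upsilon$.
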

\begin{proof}It is straightforward (c.f. \cite[Proposition 1.3]{FGR}).
\end{proof}
The following result is virtually due to \cite[Theorem 2.2]{BBG}. We include the proof for the reader's convenience.
\begin{prop}\label{prop: 2.4}The category $R\ltimes_{n}M$-Mod is equivalent to the category $\Omega$.
\end{prop}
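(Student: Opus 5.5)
The plan is to write down explicit functors in both directions and check that they are mutually inverse up to natural isomorphism. Throughout, put $A=R\ltimes_{n}M$. We regard $R$ as the subring $R\times 0\times\cdots\times 0$ of $A$ and $M$ as the summand $0\times M\times 0\times\cdots\times 0$. The key observation is that $A$ is generated as a ring by $R$ and $M$. Indeed, $M^{\otimes_R i}$ is spanned by products $m_1\cdots m_i$ computed in $A$, and the multiplication of Definition \ref{df: 2.1} shows that any product of $n+1$ elements of $M$ is zero in $A$.

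\emph{From $A$-modules to $\Omega$.} Let $N$ be a left $A$-module. Restriction along $R\hookrightarrow A$ makes $N$ a left $R$-module. Define $f:M\otimes_R N\to N$ by $m\otimes x\mapsto (0,m,0,\dots,0)x$. This map is well defined and $R$-linear because $(0,mr,0,\dots)=(0,m,0,\dots)(r,0,\dots)$ and $(0,rm,0,\dots)=(r,0,\dots)(0,m,0,\dots)$ in $A$. The iterated composite
\[
M^{\otimes_R n+1}\otimes_R N\longrightarrow N
\]
sends $m_1\otimes\cdots\otimes m_{n+1}\otimes x$ to $m_1(m_2(\cdots(m_{n+1}x)))$. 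By associativity of the $A$-action this equals $(m_1\cdots m_{n+1})x$, and the product $m_1\cdots m_{n+1}$ lies in the component $M^{\otimes_R n+1}$, which is $0$ in $A$. Hence $(N,f)\in\Omega$. An $A$-linear map is automatically $R$-linear and makes the square with $f$ commute, so we obtain a functor $F:A$-Mod$\to\Omega$.

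\emph{From $\Omega$ to $A$-modules.} Let $(X,f)\in\Omega$. For $i\ge 1$ let $f_i:M^{\otimes_R i}\otimes_R X\to X$ be the composite $f\circ(M\otimes f)\circ\cdots\circ(M^{\otimes_R i-1}\otimes f)$, and set $f_0$ to be the canonical isomorphism $R\otimes_R X\cong X$. Define the action
\[
(m_0,\dots,m_n)\cdot x=\sum_{i=0}^{n} f_i(m_i\otimes x).
\]
This is clearly additive in both variables and unital. Associativity reduces to the identity $f_{i}(a\otimes f_j(b\otimes x))=f_{i+j}(a\otimes b\otimes x)$ for $a\in M^{\otimes_R i}$ and $b\in M^{\otimes_R j}$, where we use $M^{\otimes_R i}\otimes_R M^{\otimes_R j}\cong M^{\otimes_R i+j}$. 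This identity follows from the recursive definition of the $f_i$. When $i+j>n$ we need the corresponding term to vanish, which it does because $f_{i+j}$ factors through $f_{n+1}$, and $f_{n+1}=0$ is exactly the defining condition of $\Omega$. This gives a functor $G:\Omega\to A$-Mod, and morphisms in $\Omega$ become $A$-linear since they commute with every $f_i$.

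\emph{Mutual inverses.} The composites $FG$ and $GF$ are the identity on underlying $R$-modules and on structure maps. For $GF$ we recover the original action because $(m_0,\dots,m_n)$ is a sum of products of elements of $R$ and $M$. The exactness conventions stated for $\Omega$ match those of $A$-Mod, since both are computed on underlying abelian groups. I expect the main obstacle to be the associativity verification, specifically the bookkeeping of the identification $M^{\otimes_R i}\otimes_R M^{\otimes_R j}\cong M^{\otimes_R i+j}$ that is implicit in the multiplication of Definition \ref{df: 2.1}, and the truncation at degree $n$. I would handle it by proving the identity $f_i(a\otimes f_j(b\otimes x))=f_{i+j}(a\otimes b\otimes x)$ by induction on $i$ and applying it to simple tensors.
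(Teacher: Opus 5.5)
Your proposal is correct and follows the paper's proof essentially verbatim. You use the same functors: $F(N)=(N,f_N)$ with $f_N(m\otimes x)=(0,m,0,\dots,0)x$, and $G(X,f)$ with action $\sum_i f_i(m_i\otimes x)$, checked to be mutually inverse. Your explicit reduction of associativity to $f_i\circ(M^{\otimes_{R}i}\otimes f_j)=f_{i+j}$, with vanishing for $i+j>n$ via $f_{n+1}=0$, fills in the step the paper dismisses as ``clearly''.
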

\begin{proof}Let $X$ be a left $R\ltimes_{n}M$-module.  Then $X$ may be viewed as a left $R$-module by the obvious ring homomorphism $R\rightarrow R\ltimes_{n}M$.
Define $f_{X}:  M\otimes_{R}X \rightarrow X$ by $f_{X}(m\otimes x)=(0,m,0,\cdots,0)x$ for any $m\otimes x\in  M\otimes_{R}X$. Note that $$f_{X}(r(m\otimes x))=(0, rm, 0,\cdots)x=(r, 0, 0,\cdots)(0, m, 0,\cdots)x=rf_{X}(m\otimes x)$$ for any $r\in R$. So $f_{X}$ is a left $R$-homomorphism. Also $f_{X}(M\otimes f_{X})\cdots(M^{\otimes_{R}n}\otimes f_{X})(m_{1}\otimes m_{2}\otimes m_{3}\cdots\otimes m_{n+1}\otimes x)=(0,m_{1},0,\cdots,0)(0,m_{2},0,\cdots,0)\cdots(0,m_{n+1},0,\cdots,0)x=0.$ So we can define the functor $\textbf{F}: R\ltimes_{n}M$-Mod $\longrightarrow \Omega$ by $\textbf{F}(X)=(X, f_{X})$. Let $\alpha: A\rightarrow B$ be a left $R\ltimes_{n}M$-homomorphism. Then $\alpha$ may be viewed as a left $R$-homomorphism. Note that $\alpha f_{A}(m\otimes a)=\alpha((0,m,0,\cdots,0)a)=(0,m,0,\cdots,0)\alpha(a)=f_{B}(M\otimes\alpha)(m\otimes a)$ for any $m\otimes a\in M\otimes_{R} A$. Hence $\alpha f_{A}=f_{B}(M\otimes\alpha)$. Thus $\alpha\in\Hom_{\Omega}((A,f_{A}),(B,f_{B}))$. So we can define $\textbf{F}(\alpha)=\alpha$.

Conversely, let $(Y,h)\in\Omega$. For any $(m_{0}, ..., m_{n})\in R\ltimes_{n}M$ and $y\in Y$, define $$(m_{0}, ..., m_{n})y=m_{0}y+\sum\limits_{i=1}^{n}h(M\otimes h)\cdots(M^{\otimes_{R}i-1}\otimes h)(m_{i}\otimes y).$$ Then $Y$ is clearly a left $R\ltimes_{n}M$-module. We define the functor $\textbf{G}: \Omega\longrightarrow R\ltimes_{n}M$-Mod by $\textbf{G}(Y,h)=Y$. Let $\gamma\in\Hom_{\Omega}((Y,h),(G,g))$ and $y\in Y$. Then $$\gamma((m_{0}, ..., m_{n})y)=\gamma(m_{0}y+\sum\limits_{i=1}^{n}h(M\otimes h)\cdots(M^{\otimes_{R}i-1}\otimes h)(m_{i}\otimes y))=(m_{0}, ..., m_{n})\gamma(y).$$ So $\gamma$ is a left $R\ltimes_{n}M$-homomorphism. Thus we can define  $\textbf{G}(\gamma)=\gamma$.

It is easy to check that $\textbf{G}\textbf{F}$ is naturally equivalent to $id_{R\ltimes_{n}M{\rm-Mod}}$ and $\textbf{F}\textbf{G}$ is naturally equivalent to $id_{\Omega}$. So the category $R\ltimes_{n}M$-Mod is equivalent to the category $\Omega$.
\end{proof}
\begin{rem}\label{rem: 2.5}{\rm Let $X$ be a left $R$-module. It is clear that the left $R\ltimes_{n}M$-module $(R\ltimes_{n} M)\otimes_{R}X$ corresponds to $\textbf{T}(X)$ in $\Omega$ by Proposition \ref{prop: 2.4}. In particular, the  regular left $R\ltimes_{n}M$-module $R\ltimes_{n}M$ corresponds to  $\textbf{T}(R)$. Similarly, the left $R\ltimes_{n}M$-module $\Hom_{R}(R\ltimes_{n}M,X)$ corresponds to $\textbf{H}(X)$ in $\Upsilon$.

In the rest of the paper, we will identify  $R\ltimes_{n}M$-Mod  with $\Omega$ and $\Upsilon$.}
\end{rem}
\begin{lem}\label{lem: 2.6}Let $(X,f)$ and $[Y,g]$ be  left $R\ltimes_{n}M$-modules.
\begin{enumerate}\item  There is an exact sequence $0\rightarrow (\coprod_{i=1}^{n}(M^{\otimes_{R}i}\otimes_{R}X), h)\rightarrow\textbf{T}(X)\rightarrow(X,f)\rightarrow 0$.\item There is an exact sequence  $0\rightarrow [Y,g]\rightarrow\textbf{H}(Y)\rightarrow[\coprod_{i=1}^{n}\Hom^{i}_{R}(M,Y),d]\rightarrow 0$.\item There exist  exact sequences $$0\rightarrow (\im(f),u_{1})\rightarrow(X,f)\rightarrow\textbf{Z}(\coker(f))\rightarrow 0,$$ $$0\rightarrow(\im(u_{1}),u_{2})\rightarrow(\im(f),u_{1})\rightarrow\textbf{Z}(\coker(u_{1}))\rightarrow 0,$$$$\cdots\cdots$$ $$0\rightarrow\textbf{Z}(\im(u_{n-1}))\rightarrow(\im(u_{n-2}),u_{n-1})\rightarrow\textbf{Z}(\coker(u_{n-1}))\rightarrow 0.$$
\end{enumerate}
\end{lem}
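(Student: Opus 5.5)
For (1), I will take the epimorphism to be the counit of the adjunction $(\textbf{T},\textbf{U})$ of Proposition \ref{prop: 2.3}. Concretely, $\varepsilon:\textbf{T}(X)\rightarrow (X,f)$ is given by
$$\varepsilon(x_{0},x_{1},\cdots,x_{n})=x_{0}+\sum_{i=1}^{n}f(M\otimes f)\cdots(M^{\otimes_{R}i-1}\otimes f)(x_{i}),$$
where $x_{i}\in M^{\otimes_{R}i}\otimes_{R}X$.

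The steps are as follows.
\begin{enumerate}
\item Check that $\varepsilon$ is a morphism in $\Omega$, i.e.\ $\varepsilon\mu=f(M\otimes\varepsilon)$. Both sides send $m\otimes(x_{0},\dots,x_{n})$ to $f(m\otimes x_{0})+\sum_{i} f(M\otimes f)\cdots(m\otimes x_{i})$. The term coming from $x_{n}$ vanishes on the left because $\mu$ drops it, and it vanishes on the right because the $(n+1)$-fold composite is zero in $\Omega$.
\item Note that $\varepsilon$ is surjective, since it restricts to the identity on the summand $X$.
\item Identify the kernel. The map $(x_{1},\dots,x_{n})\mapsto(-\varepsilon'(x_{1},\dots,x_{n}),x_{1},\dots,x_{n})$, with $\varepsilon'$ the tail part of $\varepsilon$, is an $R$-isomorphism from $\coprod_{i=1}^{n}M^{\otimes_{R}i}\otimes_{R}X$ onto $\ker\varepsilon$.
\item Transport the structure. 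Since $\ker\varepsilon$ is a subobject in $\Omega$ (kernels in $\Omega$ are computed on underlying modules and $\mu$ restricts to it), $h$ is defined as the induced structure map under this isomorphism.
\end{enumerate}
Exactness in $\Omega$ is then exactness of the underlying $R$-sequence.

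Part (2) is dual, using the unit $[Y,g]\rightarrow\textbf{H}(Y)$ of the adjunction $(\textbf{U},\textbf{H})$, given by $y\mapsto(y,g(y),\Hom_{R}(M,g)g(y),\dots)$. This map is injective because its first coordinate is the identity. Its cokernel is identified with $\coprod_{i=1}^{n}\Hom^{i}_{R}(M,Y)$ via projection onto the last $n$ coordinates: given a tuple, subtract the image of its first coordinate. The map $d$ is the induced structure on the cokernel. The verification that the unit commutes with $\nu$ again uses the vanishing of the $(n+1)$-fold composite in $\Upsilon$.

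For (3), the key observation is that $\im(f)\subseteq X$ is stable under $f$, because $f(M\otimes\im f)\subseteq\im f$. So $u_{1}:=f|_{M\otimes\im(f)}$, precomposed with the natural map $M\otimes_{R}\im(f)\rightarrow M\otimes_{R}X$, lands in $\im f$. The nilpotency condition is inherited, and in fact $\im(u_{1})=\im(f(M\otimes f))$. The quotient $X/\im f=\coker f$ carries the zero structure, which gives the first sequence. Iterating, $\im(u_{k})$ equals the image of the $(k+1)$-fold composite $f(M\otimes f)\cdots(M^{\otimes_{R}k}\otimes f)$. Hence the structure map $u_{n}$ on $\im(u_{n-1})$ has image equal to the image of the $(n+1)$-fold composite, which is zero. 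Therefore $(\im(u_{n-1}),u_{n})=\textbf{Z}(\im(u_{n-1}))$, giving the last sequence.

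The main obstacle is bookkeeping rather than ideas. Specifically, it is checking that the induced maps $h$, $d$ and $u_{k}$ satisfy the nilpotency axiom. This is automatic for sub- and quotient objects, so I would argue it once abstractly: sub- and quotient objects in $\Omega$ and $\Upsilon$ inherit the axiom. I would then invoke this for each case.
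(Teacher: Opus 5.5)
Your proof is correct. For (2) and (3) it is the paper's argument: the paper simply dualizes (1) to get (2). For (3) it runs the same iterated image/cokernel construction, using $\im(u_k)=\im(f(M\otimes f)\cdots(M^{\otimes_R k}\otimes f))$ to get $u_n=0$.

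In (1) your $\varepsilon$ is the paper's $\rho$, but you produce the kernel object differently.
\begin{itemize}
\item The paper writes down an explicit structure map $h$ and the monomorphism $\lambda(y_1,\ldots,y_n)=(-f(y_1),\,y_1-(M\otimes f)(y_2),\,\ldots,\,y_{n-1}-(M^{\otimes_R n-1}\otimes f)(y_n),\,y_n)$. It must then check by hand that $h$ satisfies the nilpotency axiom, that $\mu(M\otimes\lambda)=\lambda h$, and, by back-substitution, that $\im(\lambda)=\ker(\rho)$.
\item You parametrize $\ker(\varepsilon)$ by the graph-type map $x\mapsto(-\varepsilon'(x),x)$, whose image is obviously the kernel. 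You then let $h$ be the transported subobject structure, so the nilpotency and compatibility checks come for free. Unwinding, your $h$ is $m\otimes(x_1,\ldots,x_n)\mapsto(-m\otimes\varepsilon'(x_1,\ldots,x_n),\,m\otimes x_1,\ldots,m\otimes x_{n-1})$, which is noticeably simpler than the paper's formula.
\end{itemize}
The statement leaves $h$ unspecified, and both choices are just $\ker(\rho)$ with its induced structure. So the two objects are isomorphic and either one proves the lemma.

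The paper's choice buys one thing: $\lambda$ is the restriction of the natural $R\ltimes_{n}M$-map $\textbf{T}(M\otimes_R X)\to\textbf{T}(X)$ extending $y\mapsto(-f(y),y,0,\ldots,0)$. Hence the kernel is visibly a quotient of $\textbf{T}(M\otimes_R X)$, which is the shape of the presentation used later in the proof of Theorem \ref{thm: 4.4}. Your route gives up that visibility in exchange for a construction that needs no computation.
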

\begin{proof}(1) Define $\rho: \coprod_{i=0}^{n}(M^{\otimes_{R}i}\otimes_{R}X)\rightarrow X$ by $$\rho(x_{0},x_{1},x_{2},\cdots,x_{n})=x_{0}+\sum_{i=1}^{n} f(M\otimes f)\cdots(M^{\otimes_{R}i-1}\otimes f)(x_{i}).$$ It is easy to check that $\rho$ is an epic left $R$-homomorphism such that $\rho\mu=f(M\otimes\rho)$. So $\rho\in\Hom_{R\ltimes_{n}M}(\textbf{T}(X),(X,f))$.

Define $h: M\otimes_{R}\coprod_{i=1}^{n}(M^{\otimes_{R}i}\otimes_{R}X)\rightarrow \coprod_{i=1}^{n}(M^{\otimes_{R}i}\otimes_{R}X)$ by $h(m\otimes(y_{1},y_{2},\cdots,y_{n}))=(-(M\otimes f)
\cdots(M^{{\otimes_{R}n}}\otimes f)(m\otimes y_{n}), m\otimes y_{1}-(M^{{\otimes_{R}2}}\otimes f)\cdots(M^{{\otimes_{R}n}}\otimes f)(m\otimes y_{n}), \cdots, m\otimes y_{n-1}-(M^{{\otimes_{R}n}}\otimes f)(m\otimes y_{n})).$ It is easy to verify that $h$ is a left $R$-homomorphism such that $h(M\otimes h)\cdots(M^{\otimes_{R}n}\otimes h)=0$. Thus $(\coprod_{i=1}^{n}(M^{\otimes_{R}i}\otimes_{R}X), h)$ is a left $R\ltimes_{n}M$-module.

Define $\lambda: \coprod_{i=1}^{n}(M^{\otimes_{R}i}\otimes_{R}X)\rightarrow \coprod_{i=0}^{n}(M^{\otimes_{R}i}\otimes_{R}X)$ by $$\lambda(y_{1},y_{2},\cdots,y_{n})=(-f(y_{1}),y_{1}-(M\otimes f)(y_{2}),\cdots,y_{n-1}-(M^{\otimes_{R}n-1}\otimes f)(y_{n}), y_{n}).$$ It is easy to check that  $\lambda$ is a monic  left $R$-homomorphism such that $\mu(M\otimes\lambda)=\lambda h$. Thus $\lambda\in\Hom_{R\ltimes_{n}M}((\coprod_{i=1}^{n}(M^{\otimes_{R}i}\otimes_{R}X), h), \textbf{T}(X))$.
Note that $$\rho\lambda(y_{1},y_{2},\cdots,y_{n})=\rho(-f(y_{1}),y_{1}-(M\otimes f)(y_{2}),\cdots,y_{n-1}-(M^{\otimes_{R}n-1}\otimes f)(y_{n}), y_{n})=0.$$ So $\im(\lambda)\subseteq\ker(\rho)$. Let $(x_{0},x_{1},x_{2},\cdots,x_{n})\in\ker(\rho)$. Then $$x_{0}+\sum_{i=1}^{n} f(M\otimes f)\cdots(M^{\otimes_{R}i-1}\otimes f)(x_{i})=0.$$ Thus  $(x_{0},x_{1},x_{2},\cdots,x_{n})=\lambda(x_{1}+\sum\limits_{i=2}^{n}(M\otimes f)\cdots(M^{\otimes_{R}i-1}\otimes f)(x_{i}),x_{2}+\sum_{i=3}^{n}\limits(M^{\otimes_{R}2}\otimes f)\cdots(M^{\otimes_{R}i-1}\otimes f)(x_{i}),\cdots,x_{n}).$ Hence $\ker(\rho)\subseteq\im(\lambda)$ and so $\im(\lambda)=\ker(\rho)$. Therefore we get the exact sequence  $0\rightarrow\textbf{T}(M\otimes_{R}X)\stackrel{\lambda}\rightarrow\textbf{T}(X)\stackrel{\rho}\rightarrow(X,f)\rightarrow 0$.

The proof of (2) is dual.

(3) There exists the following commutative diagram with exact rows: $$\xymatrix{&M\otimes_{R}\im(f)\ar[r]\ar[d]_{u_{1}}&M \otimes_{R}X\ar[d]_{f}\ar[r]&M \otimes_{R}\coker(f)\ar[d]_{0}\ar[r]&0\\0\ar[r]&\im(f)\ar[r]&X\ar[r]&\coker(f)\ar[r]&0,}$$ where $u_{1}$ is induced by $f$ such that
$\im(u_{1})= \im(f(M\otimes f))$. We can form this diagram again with $(X,f)$ replaced by $(\im(f),u_{1})$ and get a left $R\ltimes_{n}M$-module $(\im(u_{1}),u_{2})$ such that $\im(u_{2})=\im(f(M\otimes f)(M^{\otimes_{R}2}\otimes f))$. The next step gives us a left $R\ltimes_{n}M$-module $(\im(u_{2}),u_{3})$ such that $\im(u_{3})= \im(f(M\otimes f)(M^{\otimes_{R}2}\otimes f)(M^{\otimes_{R}3}\otimes f))$. Since $f(M\otimes f)\cdots(M^{\otimes_{R}n}\otimes f)=0$, we have $u_{n}=0$. So we will eventually reach the above commutative diagram with the two extreme homomorphisms equal to zero by this process, i.e., there exist  exact sequences $$0\rightarrow (\im(f),u_{1})\rightarrow(X,f)\rightarrow\textbf{Z}(\coker(f))\rightarrow 0,$$ $$0\rightarrow(\im(u_{1}),u_{2})\rightarrow(\im(f),u_{1})\rightarrow\textbf{Z}(\coker(u_{1}))\rightarrow 0,$$$$\cdots\cdots$$ $$0\rightarrow\textbf{Z}(\im(u_{n-1}))\rightarrow(\im(u_{n-2}),u_{n-1})\rightarrow\textbf{Z}(\coker(u_{n-1}))\rightarrow 0.$$

This completes the proof.
\end{proof}
\begin{lem}\label{lem: 2.7}Let $(X,f)$ and $[Y,g]$ be left $R\ltimes_{n}M$-modules.
\begin{enumerate} \item  There is an exact sequence $M\otimes_{R}\coker(f_{i-1})\rightarrow \coker(f_{i})\rightarrow \coker(f)\rightarrow0$ for any $2\leq i\leq n+1$, where  $f_{j}=f(M\otimes f) \cdots(M^{\otimes_{R}j-1}\otimes f): M^{\otimes_{R}j}\otimes_{R} X\rightarrow X$.

Moreover, the sequence $0\rightarrow M\otimes_{R}\coker(f_{i-1})\rightarrow \coker(f_{i})\rightarrow \coker(f)\rightarrow 0$ is exact if and only if the sequence $M^{\otimes_{R}i}\otimes_{R} X\stackrel{M\otimes f_{i-1}}\longrightarrow  M\otimes_{R} X\stackrel{\pi_{i} f}\longrightarrow \coker(f_{i})$ is exact, where $\pi_{j}: X\rightarrow\coker(f_{j})$ is the canonical epimorphism.\item  There is an exact sequence  $0\rightarrow\ker(g)\longrightarrow \ker(g_{i})\longrightarrow \Hom_{R}(M,\ker(g_{i-1}))$ for any $2\leq i\leq n+1$, where  $g_{j}=\Hom_{R}^{j-1}(M,g)\cdots\Hom_{R}(M,g)g: Y\rightarrow \Hom_{R}^{j}(M,Y)$.

Moreover, the sequence $0\rightarrow\ker(g)\longrightarrow \ker(g_{i})\longrightarrow \Hom_{R}(M,\ker(g_{i-1}))\rightarrow 0$ is exact if and only if the sequence $\ker(g_{i})\stackrel{g\iota_{i}}\longrightarrow \Hom_{R}(M,Y)\stackrel{{\rm Hom}_{R}(M,g_{i-1})}\longrightarrow \Hom_{R}^{i}(M,Y)$ is exact, where $\iota_{j}: \ker(g_{j})\rightarrow Y$ is the inclusion.
\end{enumerate}
\end{lem}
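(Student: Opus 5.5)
Statement (1) follows from right exactness of $M\otimes_R-$ together with the factorization $f_i=f\circ(M\otimes f_{i-1})$; statement (2) is the formal dual and will be handled the same way. Throughout, $f_1=f$ and, under the identification $M^{\otimes_R i}\otimes_R X=M\otimes_R(M^{\otimes_R i-1}\otimes_R X)$, we have $f_i=f(M\otimes f_{i-1})$.

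\textbf{The maps.} Since $\im(f_i)\subseteq\im(f)$, there is a canonical epimorphism $\coker(f_i)\to\coker(f)$ induced by the identity of $X$. Its kernel is $\im(f)/\im(f_i)$.

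The map $\theta: M\otimes_R\coker(f_{i-1})\to\coker(f_i)$ is induced by $\pi_i f: M\otimes_R X\to\coker(f_i)$. Apply the right exact functor $M\otimes_R-$ to $M^{\otimes_R i-1}\otimes_R X\xrightarrow{f_{i-1}}X\xrightarrow{\pi_{i-1}}\coker(f_{i-1})\to0$. This gives the exact sequence
$$M^{\otimes_R i}\otimes_R X\xrightarrow{M\otimes f_{i-1}}M\otimes_R X\xrightarrow{M\otimes\pi_{i-1}}M\otimes_R\coker(f_{i-1})\to0.$$
So $M\otimes_R\coker(f_{i-1})$ is the cokernel of $M\otimes f_{i-1}$. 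Since $\pi_i f(M\otimes f_{i-1})=\pi_i f_i=0$, the map $\pi_i f$ factors uniquely as $\theta(M\otimes\pi_{i-1})$.

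\textbf{Exactness.} The image of $\theta$ equals $\im(\pi_i f)=\im(f)/\im(f_i)$, because $M\otimes\pi_{i-1}$ is onto. This is exactly the kernel of $\coker(f_i)\to\coker(f)$, which proves exactness at $\coker(f_i)$. For the ``moreover'' part, injectivity of $\theta$ amounts to $\ker(\pi_i f)=\ker(M\otimes\pi_{i-1})$. The right-hand side is $\im(M\otimes f_{i-1})$ by the displayed sequence, and $\im(M\otimes f_{i-1})\subseteq\ker(\pi_i f)$ always holds. Hence $\theta$ is monic if and only if $M^{\otimes_R i}\otimes_R X\to M\otimes_R X\to\coker(f_i)$ is exact.

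\textbf{Part (2).} Use $g_i=\Hom_R(M,g_{i-1})\,g$, so $\ker(g)\subseteq\ker(g_i)$. Apply the left exact functor $\Hom_R(M,-)$ to $0\to\ker(g_{i-1})\to Y\xrightarrow{g_{i-1}}\Hom^{i-1}_R(M,Y)$. This identifies $\Hom_R(M,\ker(g_{i-1}))$ with $\ker\Hom_R(M,g_{i-1})$. The map $g\iota_i$ lands in this kernel, so it induces $\ker(g_i)\to\Hom_R(M,\ker(g_{i-1}))$, whose kernel is $\ker(g)$. The map is epic if and only if $\im(g\iota_i)=\ker\Hom_R(M,g_{i-1})$, which is the stated exactness condition.

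The only real subtlety is checking that the identification $M\otimes_R(M^{\otimes_R i-1}\otimes_R X)$ makes $f_i=f(M\otimes f_{i-1})$ literally. This is immediate from the definition of $f_j$ and associativity of the tensor product.
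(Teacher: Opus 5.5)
Your proof is correct and follows essentially the same route as the paper. You apply $M\otimes_R-$ (respectively $\Hom_R(M,-)$) to the presentation of $\coker(f_{i-1})$ (respectively $\ker(g_{i-1})$), factor $\pi_i f$ (respectively $g\iota_i$) through the result, and read off images and kernels. You also spell out the ``moreover'' equivalences, via surjectivity of $M\otimes\pi_{i-1}$ and injectivity of $\Hom_R(M,\iota_{i-1})$, more explicitly than the paper, which simply asserts them.
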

\begin{proof}(1) The exact sequence $$M^{\otimes_{R}i-1}\otimes_{R} X\stackrel{f_{i-1}}\longrightarrow  X\stackrel{\pi_{i-1}}\longrightarrow \coker(f_{i-1})\rightarrow 0$$ induces the exact sequence $$M^{\otimes_{R}i}\otimes_{R} X\stackrel{M\otimes f_{i-1}}\longrightarrow  M\otimes_{R} X\stackrel{M\otimes \pi_{i-1}}\longrightarrow M\otimes_{R}\coker(f_{i-1})\rightarrow 0.$$ Since the composition $$M^{\otimes_{R}i}\otimes_{R} X\stackrel{M\otimes f_{i-1}}\longrightarrow  M\otimes_{R} X\stackrel{\pi_{i} f}\longrightarrow \coker(f_{i})$$ is 0, there is $\alpha_{i}: M\otimes_{R}\coker(f_{i-1})\rightarrow \coker(f_{i})$ such that $\alpha_{i}(M\otimes \pi_{i-1})=\pi_{i} f$. Define $\rho_{i}: \coker(f_{i})\longrightarrow \coker(f)$ by $\rho_{i}(\pi_{i}(x))=\pi_{1}(x)$. Note that $\im(\alpha_{i})=\im(\alpha_{i}(M\otimes \pi_{i-1}))=\pi_{i}(\im(f))$. So we get the exact sequence $$M\otimes_{R}\coker(f_{i-1})\stackrel{\alpha_{i}}\longrightarrow \coker(f_{i})\stackrel{\rho_{i}}\longrightarrow \coker(f)\rightarrow 0.$$

Moreover, the sequence $0\rightarrow M\otimes_{R}\coker(f_{i-1})\stackrel{\alpha_{i}}\longrightarrow \coker(f_{i})\stackrel{\rho_{i}}\longrightarrow \coker(f)\rightarrow 0$ is exact if and only if the sequence $M^{\otimes_{R}i}\otimes_{R} X\stackrel{M\otimes f_{i-1}}\longrightarrow  M\otimes_{R} X\stackrel{\pi_{i} f}\longrightarrow \coker(f_{i})$ is exact.

(2) The exact sequence $$0\rightarrow \ker(g_{i-1})\stackrel{\iota_{i-1}}\longrightarrow Y\stackrel{g_{i-1}}\longrightarrow \Hom_{R}^{i-1}(M,Y)$$ induces the exact sequence $$0\rightarrow \Hom_{R}(M,\ker(g_{i-1}))\stackrel{{\rm Hom}_{R}(M,\iota_{i-1})}\longrightarrow \Hom_{R}(M,Y)\stackrel{{\rm Hom}_{R}(M,g_{i-1})}\longrightarrow \Hom_{R}^{i}(M,Y).$$ Since the composition $$\ker(g_{i})\stackrel{g\iota_{i}}\longrightarrow \Hom_{R}(M,Y)\stackrel{{\rm Hom}_{R}(M,g_{i-1})}\longrightarrow \Hom_{R}^{i}(M,Y)$$ is 0, there is $\beta_{i}: \ker(g_{i})\rightarrow \Hom_{R}(M,\ker(g_{i-1}))$ such that $\Hom_{R}(M,\iota_{i-1})\beta_{i}=g\iota_{i}$. Note that $\ker(\beta_{i})=\ker(g)$. So we get the exact sequence $$0\rightarrow\ker(g)\longrightarrow \ker(g_{i})\stackrel{\beta_{i}}\longrightarrow \Hom_{R}(M,\ker(g_{i-1})).$$

Moreover, the sequence $0\rightarrow\ker(g)\longrightarrow \ker(g_{i})\stackrel{\beta_{i}}\longrightarrow \Hom_{R}(M,\ker(g_{i-1}))\rightarrow 0$ is exact if and only if the sequence $\ker(g_{i})\stackrel{g\iota_{i}}\longrightarrow \Hom_{R}(M,Y)\stackrel{{\rm Hom}_{R}(M,g_{i-1})}\longrightarrow \Hom_{R}^{i}(M,Y)$ is exact.
\end{proof}
Now we give characterizations of projective, injective and flat modules over $R\ltimes_{n}M$, which generalize \cite[Corollary 1.6 and Proposition 1.14]{FGR} and \cite[Proposition 1]{PR}. We also point out that the method of proof here  is different from \cite[Corollaries 3.3, 3.6 and 3.10]{BBG}.
\begin{thm}\label{thm: 2.8}The following conditions are equivalent for a left $R\ltimes_{n}M$-module $(X,f)$:
\begin{enumerate} \item $(X,f)$ is a  projective left $R\ltimes_{n}M$-module.\item The sequence $M^{\otimes_{R}n+1}\otimes_{R} X\stackrel{(M\otimes f) \cdots(M^{\otimes_{R}n}\otimes f)}\longrightarrow  M\otimes_{R} X\stackrel{f}\longrightarrow X$ is exact and $\coker(f)$ is a projective  left $R$-module.\item $(X,f)\cong \textbf{T}(P)$ with $P$ a projective  left $R$-module.
\end{enumerate}
\end{thm}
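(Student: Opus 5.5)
The plan is to prove $(1)\Rightarrow(2)\Rightarrow(3)\Rightarrow(1)$, working throughout in $\Omega$ via Proposition \ref{prop: 2.4} and Remark \ref{rem: 2.5}. As in Lemma \ref{lem: 2.7}, write $f_j=f(M\otimes f)\cdots(M^{\otimes_R j-1}\otimes f)$.

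For $(3)\Rightarrow(1)$: $\textbf{T}$ commutes with direct sums, and $\textbf{T}(R)$ is the regular module by Remark \ref{rem: 2.5}. Hence if $P\oplus Q\cong R^{(I)}$, then $\textbf{T}(P)$ is a direct summand of the free module $\textbf{T}(R^{(I)})$, so it is projective.

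For $(1)\Rightarrow(2)$: a projective $(X,f)$ is a direct summand of some $\textbf{T}(F)$ with $F$ free. Both conditions in (2) pass to direct summands, since the relevant sequences and cokernels are additive in $(X,f)$. So it suffices to check $\textbf{T}(F)$. There $\coker(\mu)=\textbf{C}\textbf{T}(F)=F$ is projective. Next, $M\otimes_R\textbf{T}(F)=\coprod_{i=1}^{n+1}M^{\otimes_R i}\otimes_R F$ and $\mu$ is the shift, so $\ker\mu$ is exactly the top summand $M^{\otimes_R n+1}\otimes_R F$. The composite $(M\otimes\mu)\cdots(M^{\otimes_R n}\otimes\mu)$ on $M^{\otimes_R n+1}\otimes_R\textbf{T}(F)$ shifts the $x_0$-component into position $n+1$ and kills the rest. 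Its image is therefore that top summand, which gives exactness.

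For $(2)\Rightarrow(3)$, put $P=\coker(f)$, which is projective, so $\pi_1\colon X\to P$ splits by some $s\colon P\to X$. By the adjunction $(\textbf{T},\textbf{U})$ of Proposition \ref{prop: 2.3}, $s$ induces a morphism $\varphi\colon\textbf{T}(P)\to(X,f)$, namely $\varphi(x_0,\dots,x_n)=s(x_0)+\sum_i f_i(M^{\otimes_R i}\otimes s)(x_i)$. I will show $\varphi$ is bijective by induction along the cokernels $\coker(f_i)$, using Lemma \ref{lem: 2.7}(1).

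The first step is to check the exactness criterion there for every $2\le i\le n+1$, namely that $\ker(\pi_i f)=\im(M\otimes f_{i-1})$. Since $\ker(\pi_i f)=f^{-1}(\im f_i)=\im(M\otimes f_{i-1})+\ker f$, and by hypothesis $\ker f=\im(M\otimes f_n)\subseteq\im(M\otimes f_{i-1})$, this holds. Lemma \ref{lem: 2.7}(1) then gives short exact sequences $0\to M\otimes_R\coker(f_{i-1})\to\coker(f_i)\to P\to0$.

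The second step compares these with the truncations $\coprod_{j<i}M^{\otimes_R j}\otimes_R P$ of $\textbf{T}(P)$. These fit into the analogous split sequences, and $\varphi$ induces maps to $\coker(f_i)$ compatible with both sequences. Inductively $M\otimes(\text{iso})$ is an iso, and the maps on the $P$ terms are the identity, so the five lemma gives isomorphisms at every stage.

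At $i=n+1$ we have $f_{n+1}=0$, so $\coker(f_{n+1})=X$, and the truncation is all of $\textbf{T}(P)$. Hence $\varphi$ is an isomorphism, giving $(X,f)\cong\textbf{T}(P)$.

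The main obstacle is this inductive step. One must verify that the maps induced by $\varphi$ genuinely commute with the maps $\alpha_i$ and $\rho_i$ of Lemma \ref{lem: 2.7}, and that the truncated $\textbf{T}(P)$ terms form the corresponding exact sequences. I would check this directly on generators $m_1\otimes\cdots\otimes m_j\otimes p$.
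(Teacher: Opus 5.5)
Your proposal is correct and follows the paper's route. For $(1)\Rightarrow(2)$ you reduce to a direct summand of a free module $\textbf{T}(R)^{(I)}$, and for $(2)\Rightarrow(3)$ you use the short exact sequences of Lemma \ref{lem: 2.7}(1) together with a splitting of $\pi_1$; your adjunction-induced $\varphi$ is the paper's $\gamma$, since $\alpha_i(M\otimes\pi_{i-1})=\pi_i f$. Your check that $\ker(\pi_i f)=\im(M\otimes f_{i-1})$ and your five-lemma induction on the $\coker(f_i)$ supply the details the paper leaves as ``continuing this process'' and ``easy to check''.
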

\begin{proof}(1) $\Rightarrow (2)$ There is a split epimorphism $\theta: \textbf{T}(R)^{(I)}\rightarrow (X,f)$. Therefore we get the split epimorphism $\textbf{C}(\theta): \textbf{C}(\textbf{T}(R)^{(I)})\rightarrow \textbf{C}(X,f)$. Since $\textbf{C}(\textbf{T}(R)^{(I)})\cong R^{(I)}$, $\coker(f)=\textbf{C}(X,f)$ is a projective  left $R$-module.

Consider the following commutative diagram with exact rows: $$\xymatrix{&M^{\otimes_{R}n+1}\otimes_{R}(\coprod_{i=0}^{n}M^{\otimes_{R}i})^{(I)}\ar[d]_{(M\otimes \mu^{(I)}) \cdots(M^{\otimes_{R}n}\otimes \mu^{(I)})}\ar[rrr]^{M^{\otimes_{R}n+1}\otimes\theta}&&&M^{\otimes_{R}n+1}\otimes_{R} X\ar[r]\ar[d]^{(M\otimes f) \cdots(M^{\otimes_{R}n}\otimes f)}&0\\&M\otimes_{R}(\coprod_{i=0}^{n}M^{\otimes_{R}i})^{(I)}\ar[rrr]^{M\otimes\theta}\ar[d]_{\mu^{(I)}}&&&M\otimes_{R} X\ar[r]\ar[d]^{f}&0\\&(\coprod_{i=0}^{n}M^{\otimes_{R}i})^{(I)}\ar[rrr]^{\theta}&&&X\ar[r]&0.}$$ Since $\theta$ is a split epimorphism and the left column is exact, diagram chasing shows that the right column is also exact.

(2) $\Rightarrow (3)$ Let $f_{i}=f(M\otimes f) \cdots(M^{\otimes_{R}i-1}\otimes f): M^{\otimes_{R}i}\otimes_{R} X\longrightarrow X$ and $\pi_{i}: X\longrightarrow\coker(f_{i})$ be the canonical epimorphism. Since the sequence $M^{\otimes_{R}n+1}\otimes_{R} X\stackrel{M\otimes f_{n}}\longrightarrow  M\otimes_{R} X\stackrel{f}\longrightarrow X$ is exact, we get the exact sequence $$0\longrightarrow M\otimes_{R}\coker(f_{n})\stackrel{\alpha_{n+1}}\longrightarrow X\stackrel{\pi_{1}}\longrightarrow \coker(f)\longrightarrow 0$$  such that $\alpha_{n+1}(M\otimes \pi_{n})=f$ by Lemma \ref{lem: 2.7}(1). Since the sequence $$M^{\otimes_{R}n}\otimes_{R} X\stackrel{M\otimes f_{n-1}}\longrightarrow  M\otimes_{R} X\stackrel{\pi_{n}f}\longrightarrow \coker(f_{n})$$  is exact, we get the exact sequence $$0\longrightarrow M\otimes_{R}\coker(f_{n-1})\stackrel{\alpha_{n}}\longrightarrow \coker(f_{n})\stackrel{\rho_{n}}\longrightarrow \coker(f)\longrightarrow 0$$  such that $\alpha_{n}(M\otimes \pi_{n-1})=\pi_{n} f$ by Lemma \ref{lem: 2.7}(1). Continuing this process, we get the exact sequences   $$0\longrightarrow M\otimes_{R}\coker(f_{i})\stackrel{\alpha_{i+1}}\longrightarrow \coker(f_{i+1})\stackrel{\rho_{i+1}}\longrightarrow \coker(f)\longrightarrow 0,$$ $$\cdots\cdots$$ $$0\longrightarrow M\otimes_{R}\coker(f)\stackrel{\alpha_{2}}\longrightarrow \coker(f_{2})\stackrel{\rho_{2}}\longrightarrow \coker(f)\longrightarrow 0$$  such that $\alpha_{i}(M\otimes \pi_{i-1})=\pi_{i} f$. Since $\coker(f)$ is a projective  left $R$-module, the sequence $$0\longrightarrow M\otimes_{R}\coker(f_{n})\stackrel{\alpha_{n+1}}\longrightarrow X\stackrel{\pi_{1}}\longrightarrow \coker(f)\longrightarrow 0$$ is split. So there is $\iota:  \coker(f)\longrightarrow X$ such that $\pi_{1}\iota=1$.

Define $\gamma: \coprod_{i=0}^{n}(M^{\otimes_{R}i}\otimes_{R}\coker(f))\rightarrow X$ by $\gamma(x_{0}, x_{1},\cdots, x_{n})=\iota(x_{0})+\alpha_{n+1}(M\otimes\pi_{n}\iota)(x_{1})+\alpha_{n+1}(M\otimes\alpha_{n})(M\otimes_{R} M\otimes\pi_{n-1}\iota)(x_{2})+\cdots+\alpha_{n+1}(M\otimes\alpha_{n})\cdots
(M^{\otimes_{R}n-1}\otimes\pi_{2}\iota)(x_{n-1})+\alpha_{n+1}(M\otimes\alpha_{n})\cdots(M^{\otimes_{R}n-1}\otimes\alpha_{2})(x_{n})$. It is easy to check that $\gamma$ is a left $R$-isomorphism such that $\gamma\mu=f(M\otimes\gamma)$. So $(X,f)\cong \textbf{T}(\coker(f))$.

(3) $\Rightarrow (1)$ is obvious.
\end{proof}
\begin{thm}\label{thm: 2.9}The following conditions are equivalent for a left $R\ltimes_{n}M$-module $[Y,g]$:
\begin{enumerate} \item $[Y,g]$ is an injective left $R\ltimes_{n}M$-module.\item The sequence $Y\stackrel{g}\longrightarrow \Hom_{R}(M,Y)\stackrel{{\rm Hom}_{R}^{n}(M,g)\cdots{\rm Hom}_{R}(M,g)}\longrightarrow \Hom^{n+1}_{R}(M,Y)$ is exact and $\ker(g)$ is an injective  left $R$-module.\item $[Y,g]\cong \textbf{H}(E)$ with $E$ an injective  left $R$-module.
\end{enumerate}
\end{thm}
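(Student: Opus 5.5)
We prove this dually to Theorem \ref{thm: 2.8}, working in $\Upsilon$ and using $\textbf{H}$, $\textbf{K}$ and Lemma \ref{lem: 2.7}(2) in place of $\textbf{T}$, $\textbf{C}$ and Lemma \ref{lem: 2.7}(1). The cycle of implications is (1) $\Rightarrow$ (2) $\Rightarrow$ (3) $\Rightarrow$ (1).

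For (3) $\Rightarrow$ (1), note that $\textbf{H}$ is right adjoint to the exact functor $\textbf{U}$ by Proposition \ref{prop: 2.3}. Hence $\Hom_{R\ltimes_{n}M}(-,\textbf{H}(E))\cong\Hom_{R}(\textbf{U}(-),E)$ is exact when $E$ is injective, so $\textbf{H}(E)$ is injective. Equivalently, by Remark \ref{rem: 2.5}, $\textbf{H}(E)=\Hom_{R}(R\ltimes_{n}M,E)$ is the coinduced module.

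For (1) $\Rightarrow$ (2), first embed $[Y,g]$ into $\textbf{H}(E)$ with $E$ an injective left $R$-module containing $Y$. The unit $[Y,g]\to\textbf{H}(Y)$ of Lemma \ref{lem: 2.6}(2) composed with $\textbf{H}(Y\hookrightarrow E)$ is monic, since $\textbf{H}$ is left exact. Because $[Y,g]$ is injective, we get a split monomorphism $\theta:[Y,g]\to\textbf{H}(E)$.
\begin{enumerate}
\item Applying $\textbf{K}$, which preserves split monos, gives $\ker(g)$ as a direct summand of $\textbf{K}\textbf{H}(E)=E$. So $\ker(g)$ is injective.
\item For exactness, first check it for $\textbf{H}(E)$ directly: $\nu$ is the shift $(x_0,\dots,x_n)\mapsto(x_1,\dots,x_n,0)$, and the composite $\Hom^{n}_{R}(M,\nu)\cdots\Hom_{R}(M,\nu)$ applied to $\Hom_R(M,-)$ of it kills exactly the image of $\nu$.
\item Then form the commutative ladder dual to the diagram in Theorem \ref{thm: 2.8}, with columns $Y\to\Hom_{R}(M,Y)\to\Hom^{n+1}_{R}(M,Y)$ and the corresponding column for $\textbf{H}(E)$, joined by $\theta$, $\Hom_R(M,\theta)$ and $\Hom^{n+1}_R(M,\theta)$. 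These horizontal maps are split monos with compatible retractions, because $\Hom$ functors preserve splitting.
\item A diagram chase, using the retraction to push elements back, shows the left column is exact.
\end{enumerate}

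For (2) $\Rightarrow$ (3), set $g_j$ and $\iota_j$ as in Lemma \ref{lem: 2.7}(2). Since $g_{n+1}=0$, we have $\ker(g_{n+1})=Y$, and the hypothesis is exactly the condition in Lemma \ref{lem: 2.7}(2) for $i=n+1$. This yields a short exact sequence
$$0\to\ker(g)\to Y\xrightarrow{\beta_{n+1}}\Hom_{R}(M,\ker(g_{n}))\to 0.$$
Next, we need the exactness condition for each smaller $i$, namely that $\ker(g_i)\xrightarrow{g\iota_i}\Hom_R(M,Y)\to\Hom_R^{i}(M,Y)$ is exact. To get it, take $\varphi\in\Hom_R(M,Y)$ with $\Hom_R(M,g_{i-1})\varphi=0$. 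Then $\Hom_R(M,g_n)\varphi=0$, so by hypothesis $\varphi=g(y)$ for some $y\in Y$. From $g_i(y)=\Hom_R(M,g_{i-1})g(y)=0$ we get $y\in\ker(g_i)$. This descending induction produces short exact sequences
$$0\to\ker(g)\to\ker(g_i)\to\Hom_R(M,\ker(g_{i-1}))\to0\quad\text{for } 2\le i\le n+1.$$
Since $\ker(g)$ is injective, every one of these sequences splits. Choosing retractions $\kappa$ and iterating, exactly dually to the map $\gamma$ in Theorem \ref{thm: 2.8}, we define
$$\delta:Y\to\coprod_{i=0}^{n}\Hom^{i}_{R}(M,\ker(g)),\qquad \delta(y)=(\kappa(y),\ \Hom(M,\kappa)\beta_{n+1}(y),\ \dots).$$
Its components are obtained by composing the $\beta$'s, the $\Hom_R(M,-)$ of the $\beta$'s, and the retractions.

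The main obstacle is checking that $\delta$ is an isomorphism intertwining $g$ and $\nu$, i.e.\ $\nu\delta=\Hom_R(M,\delta)g$. Bijectivity follows by induction on the filtration $\ker(g)=\ker(g_1)\subseteq\ker(g_2)\subseteq\cdots\subseteq\ker(g_{n+1})=Y$, using the split short exact sequences and the five lemma. The compatibility with $\nu$ uses $\Hom_R(M,\iota_{i-1})\beta_i=g\iota_i$. It may be cleanest to build $\delta$ inductively on $\ker(g_i)$, as an isomorphism onto $\coprod_{j<i}\Hom^{j}_R(M,\ker g)$ compatible with the structure maps, rather than by a single explicit formula.
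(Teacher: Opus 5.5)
Your proposal is correct and follows the paper's proof essentially step for step. The paper realizes $[Y,g]$ as a summand of $(\textbf{T}(R)^{+})^{I}$, which is just $\textbf{H}((R^{+})^{I})$, applies $\textbf{K}$ and runs the same ladder chase; for (2) $\Rightarrow$ (3) it iterates Lemma \ref{lem: 2.7}(2), splits the resulting sequences by injectivity of $\ker(g)$, and calls $[Y,g]\cong\textbf{H}(\ker(g))$ ``easy to verify''.

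Your descending-induction check of the intermediate exactness conditions supplies a step the paper only asserts. Your final step goes through provided all retractions are restrictions of one $\kappa:Y\to\ker(g)$. Then $\delta$ is the adjunction morphism $y\mapsto(\Hom^{i}_{R}(M,\kappa)g_{i}(y))_{i=0}^{n}$ (with $g_{0}=1_{Y}$), so it is automatically a morphism, and your five-lemma induction along the $\ker(g_{i})$ gives bijectivity.
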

\begin{proof}(1) $\Rightarrow (2)$ There is a split monomorphism $\beta: [Y,g]\rightarrow(\textbf{T}(R)^{+})^{I}$. Then we get the split monomorphism $\textbf{K}(\beta): \textbf{K}[Y,g]\rightarrow\textbf{K}((\textbf{T}(R)^{+})^{I})$. Since $\textbf{K}((\textbf{T}(R)^{+})^{I})\cong (R^{+})^{I}$ is injective, $\ker(g)=\textbf{K}[Y,g]$ is an injective  left $R$-module.

Consider the following commutative diagram with exact rows: $$\xymatrix{0\ar[r]&Y\ar[d]\ar[rrr]^{\beta}&&&(\coprod_{i=0}^{n}\Hom^{i}_{R}(M,R^{+}))^{I}\ar[d] \\0\ar[r]&\Hom_{R}(M,Y)\ar[d]
\ar[rrr]^{{\rm Hom}_{R}(M,\beta)}&&&(\coprod_{i=0}^{n}\Hom^{i+1}_{R}(M,R^{+}))^{I}\ar[d]\\0\ar[r]&\Hom^{n+1}_{R}(M,Y)\ar[rrr]^{{\rm Hom}_{R}^{n+1}(M,\beta)}&&&(\coprod_{i=0}^{n}\Hom^{i+n+1}_{R}(M,R^{+}))^{I}.}$$ Since $\beta$ is a split monomorphism and the right column is exact, diagram chasing shows that the left column is also exact.

(2) $\Rightarrow (3)$ Let $g_{i}=\Hom_{R}^{i-1}(M,g)\cdots\Hom_{R}(M,g)g: Y\rightarrow \Hom_{R}^{i}(M,Y)$ and $\iota_{i}: \ker(g_{i})\rightarrow Y$ be the inclusion. Since the sequence $Y\stackrel{g}\longrightarrow \Hom_{R}(M,Y)\stackrel{{\rm Hom}_{R}^{n}(M,g)\cdots{\rm Hom}_{R}(M,g)}\longrightarrow \Hom^{n+1}_{R}(M,Y)$ is exact, we get the exact sequence $$0\rightarrow\ker(g)\longrightarrow Y\longrightarrow \Hom_{R}(M,\ker(g_{n}))\rightarrow 0$$ by Lemma \ref{lem: 2.7}(2). Since the sequence $\ker(g_{n})\stackrel{g\iota_{n}}\longrightarrow \Hom_{R}(M,Y)\stackrel{{\rm Hom}_{R}^{n-1}(M,g)\cdots{\rm Hom}_{R}(M,g)}\longrightarrow \Hom^{n}_{R}(M,Y)$ is exact,  we get the exact sequence $$0\rightarrow\ker(g)\longrightarrow \ker(g_{n})\longrightarrow \Hom_{R}(M,\ker(g_{n-1}))\rightarrow 0$$ by Lemma \ref{lem: 2.7}(2). Continuing this process, we get the exact sequences $$0\rightarrow\ker(g)\longrightarrow \ker(g_{i+1})\longrightarrow \Hom_{R}(M,\ker(g_{i}))\rightarrow 0,$$ $$\cdots\cdots$$ $$0\rightarrow\ker(g)\longrightarrow \ker(g_{2})\longrightarrow \Hom_{R}(M,\ker(g))\rightarrow 0.$$  Since $\ker(g)$ is an injective  left $R$-module, all exact sequences $$0\rightarrow\ker(g)\longrightarrow \ker(g_{i+1})\longrightarrow \Hom_{R}(M,\ker(g_{i}))\rightarrow 0$$ are split. Thus $Y\cong\coprod_{i=0}^{n}\Hom_{R}^{i}(M,\ker(g))$. It is easy to verify that $[Y,g]\cong \textbf{H}(\ker(g))$.

(3) $\Rightarrow (1)$ is clear.
\end{proof}
\begin{thm}\label{thm: 2.10} $(X,f)$ is a flat left $R\ltimes_{n}M$-module if and only if the sequence $M^{\otimes_{R}n+1}\otimes_{R} X\stackrel{(M\otimes f) \cdots(M^{\otimes_{R}n}\otimes f)}\longrightarrow  M\otimes_{R} X\stackrel{f}\longrightarrow X$ is exact and $\coker(f)$ is a flat left $R$-module.
\end{thm}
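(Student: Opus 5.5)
The plan is to use the Lazard--Govorov theorem: a module is flat if and only if it is a direct limit of finitely generated free modules, equivalently of projective modules. In the category $\Omega$, direct limits are computed on underlying $R$-modules, so we have $\varinjlim (X_j,f_j)=(\varinjlim X_j,\varinjlim f_j)$. The two conditions in the statement are then compared with the corresponding conditions in Theorem \ref{thm: 2.8}.

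\emph{Necessity.} Suppose $(X,f)$ is flat over $R\ltimes_{n}M$. Write $(X,f)=\varinjlim (X_j,f_j)$ with each $(X_j,f_j)$ a finitely generated free module, so $(X_j,f_j)\cong\textbf{T}(R^{m_j})$.

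By Theorem \ref{thm: 2.8}, each sequence
$$M^{\otimes_{R}n+1}\otimes_{R} X_j\stackrel{(M\otimes f_j)\cdots(M^{\otimes_{R}n}\otimes f_j)}\longrightarrow M\otimes_{R} X_j\stackrel{f_j}\longrightarrow X_j$$
is exact, and $\coker(f_j)$ is projective. Tensor products commute with direct limits, and direct limits are exact in $R$-Mod. Hence the limit sequence
$$M^{\otimes_{R}n+1}\otimes_{R} X\longrightarrow M\otimes_{R} X\longrightarrow X$$
is exact. The functor $\textbf{C}$ is a left adjoint (Proposition \ref{prop: 2.3}), so it commutes with colimits. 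Therefore $\coker(f)=\varinjlim\coker(f_j)$ is a direct limit of projectives, hence flat.

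\emph{Sufficiency.} Here the idea is to reduce to the projective case via Lazard applied to $\coker(f)$, or to use Tor directly. The route I would try first is a character-module argument combined with Theorem \ref{thm: 2.9}, applied to right modules over $R\ltimes_{n}M$ via the analogous right-sided functors. The steps are:

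\begin{enumerate}
\item $(X,f)$ is flat if and only if the character module $X^{+}=\Hom_{\mathbb Z}(X,\mathbb{Q}/\mathbb{Z})$ is an injective right $R\ltimes_{n}M$-module.
\item In the right-module version of $\Upsilon$, $X^{+}$ corresponds to $[X^{+},g]$, where $g\colon X^{+}\to\Hom_{R}(M,X^{+})\cong(M\otimes_{R}X)^{+}$ is $f^{+}$.
\item Via the adjunction isomorphisms $\Hom^{k}_{R}(M,X^{+})\cong(M^{\otimes_{R}k}\otimes_{R}X)^{+}$, the sequence in Theorem \ref{thm: 2.9}(2) for $[X^{+},g]$ is the character dual of the sequence in our statement. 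Since $(-)^{+}$ is faithfully exact, one is exact if and only if the other is.
\item $\ker(f^{+})\cong\coker(f)^{+}$, and this is injective if and only if $\coker(f)$ is flat.
\end{enumerate}

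Theorem \ref{thm: 2.9} then gives the equivalence. This argument actually proves both directions at once, so I would use it as the whole proof.

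The main obstacle is step 3: checking that the maps correspond correctly under the natural isomorphisms. Specifically, one must verify that $\Hom^{n}_{R}(M,g)\cdots\Hom_{R}(M,g)$ corresponds to $\bigl((M\otimes f)\cdots(M^{\otimes_{R}n}\otimes f)\bigr)^{+}$. One must also make sure the right-module analogue of Theorem \ref{thm: 2.9} is set up with $\Hom$ over the correct side, namely $\Hom_{R}(M,X^{+})$ for right $R$-modules, taken as $\Hom_{R^{op}}$ of right modules. This is a naturality check of the tensor--hom adjunction, which I expect to be routine once it is written out.
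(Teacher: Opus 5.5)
Your final argument, which passes to $X^{+}$, applies the right-module version of Theorem \ref{thm: 2.9}, and uses $\Hom^{k}_{R}(M,X^{+})\cong(M^{\otimes_{R}k}\otimes_{R}X)^{+}$ together with $\ker(f^{+})\cong\coker(f)^{+}$, is exactly the paper's proof, and it is correct. The Lazard direct-limit argument you sketch for necessity is also valid, but it is unnecessary once the duality argument handles both directions.
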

\begin{proof}We write $\omega: (M\otimes_{R} X)^{+}\rightarrow\Hom_{R}(M, X^{+})$ to be the natural isomorphism. Then $(X,f)$ is a flat left $R\ltimes_{n}M$-module  if and only if  $[X^{+},\omega f^{+}]$ is an injective right $R\ltimes_{n}M$-module if and only if the sequence $X^{+}\stackrel{\omega f^{+}}\longrightarrow \Hom_{R}(M,X^{+})\stackrel{{\rm Hom}_{R}^{n}(M,\omega f^{+})\cdots{\rm Hom}_{R}(M,\omega f^{+})}\longrightarrow \Hom^{n+1}_{R}(M,X^{+})$ is exact and $\ker(\omega f^{+})$ is an injective right $R$-module by Theorem \ref{thm: 2.9} if and only if the sequence $M^{\otimes_{R}n+1}\otimes_{R} X\stackrel{(M\otimes f) \cdots(M^{\otimes_{R}n}\otimes f)}\longrightarrow  M\otimes_{R} X\stackrel{f}\longrightarrow X$ is exact and $\coker(f)$ is a flat left $R$-module.
\end{proof}
\begin{cor}\label{cor: 2.11}Let $X$ be a left $R$-module. Then
\begin{enumerate}\item $X$ is a projective left $R$-module if and only if $\textbf{T}(X)$ is a projective  left $R\ltimes_{n}M$-module.
\item $X$ is an injective left $R$-module if and only if $\textbf{H}(X)$ is an injective left $R\ltimes_{n}M$-module.\item  $X$ is a flat left $R$-module if and only if $\textbf{T}(X)$ is a flat left $R\ltimes_{n}M$-module.
\end{enumerate}
\end{cor}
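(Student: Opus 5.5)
Each part follows from Theorems \ref{thm: 2.8}, \ref{thm: 2.9} and \ref{thm: 2.10}, applied to $\textbf{T}(X)$ or $\textbf{H}(X)$, using the identities $\textbf{C}\textbf{T}=id_{R{\rm-Mod}}$ and $\textbf{K}\textbf{H}=id_{R{\rm-Mod}}$ from Proposition \ref{prop: 2.3}.

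For (1), the forward direction is the implication (3) $\Rightarrow$ (1) of Theorem \ref{thm: 2.8}. For the converse, suppose $\textbf{T}(X)=(\coprod_{i=0}^{n}M^{\otimes_{R}i}\otimes_{R}X,\mu)$ is projective. By Theorem \ref{thm: 2.8}(2), $\coker(\mu)$ is a projective left $R$-module. Since $\mu$ shifts components, its image is $\coprod_{i=1}^{n}M^{\otimes_{R}i}\otimes_{R}X$, so $\coker(\mu)=\textbf{C}\textbf{T}(X)\cong X$. Hence $X$ is projective.

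Part (2) is dual. If $X$ is injective, then $\textbf{H}(X)$ is injective by Theorem \ref{thm: 2.9}, (3) $\Rightarrow$ (1). Conversely, if $\textbf{H}(X)=[\coprod_{i=0}^{n}\Hom_{R}^{i}(M,X),\nu]$ is injective, then $\ker(\nu)$ is injective by Theorem \ref{thm: 2.9}(2). Because $\nu$ shifts components leftward, $\ker(\nu)$ is exactly the $0$-th summand, so $\ker(\nu)=\textbf{K}\textbf{H}(X)\cong X$.

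For (3), I use Theorem \ref{thm: 2.10}. The converse direction is immediate: if $\textbf{T}(X)$ is flat, then $\coker(\mu)\cong X$ is flat. For the forward direction, I must check two things. The first is that $\coker(\mu)\cong X$ is flat, which holds by assumption. The second, and the only step with real content, is exactness of
$$M^{\otimes_{R}n+1}\otimes_{R}\textbf{T}(X)\stackrel{(M\otimes\mu)\cdots(M^{\otimes_{R}n}\otimes\mu)}\longrightarrow M\otimes_{R}\textbf{T}(X)\stackrel{\mu}\longrightarrow\textbf{T}(X).$$
I will prove this for every $X$, with no flatness hypothesis. Identify $M\otimes_{R}\textbf{T}(X)$ with $\coprod_{i=0}^{n}M^{\otimes_{R}i+1}\otimes_{R}X$. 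Under this identification $\mu$ sends the summand $M^{\otimes_{R}i+1}\otimes_{R}X$ isomorphically onto the $(i+1)$-th component for $i<n$ and kills the top summand $M^{\otimes_{R}n+1}\otimes_{R}X$. So $\ker(\mu)$ is precisely that top summand.

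Next I describe the left map. The composite $(M\otimes\mu)\cdots(M^{\otimes_{R}n}\otimes\mu)$ applies $n$ shifts, so it sends the $0$-th summand $M^{\otimes_{R}n+1}\otimes_{R}X$ of $M^{\otimes_{R}n+1}\otimes_{R}\textbf{T}(X)$ identically onto the top summand $M^{\otimes_{R}n+1}\otimes_{R}X$ of $M\otimes_{R}\textbf{T}(X)$. It kills all other summands, since they would be pushed beyond degree $n$. Its image therefore equals $\ker(\mu)$, which gives exactness.

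The main obstacle is bookkeeping the indices in this composite, making sure that the shifts truncate correctly at degree $n$. Alternatively, (3) follows directly from (1), since $\textbf{T}$ commutes with direct limits: by Lazard, if $X=\varinjlim P_j$ with each $P_j$ projective, then $\textbf{T}(X)=\varinjlim\textbf{T}(P_j)$ is a direct limit of projective modules and hence flat.
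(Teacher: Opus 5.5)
Your proof is correct and follows the paper's route: the paper calls the corollary an immediate consequence of Theorems \ref{thm: 2.8}, \ref{thm: 2.9} and \ref{thm: 2.10}, and you supply the details it leaves implicit, namely $\coker(\mu)\cong X$, $\ker(\nu)\cong X$, and the exactness of $M^{\otimes_{R}n+1}\otimes_{R}\textbf{T}(X)\rightarrow M\otimes_{R}\textbf{T}(X)\rightarrow\textbf{T}(X)$ for every $X$. Your index bookkeeping for that composite is right, since only the $0$-th summand survives the $n$ shifts and it lands exactly on $\ker(\mu)$; the Lazard argument for the forward direction of (3) is also valid, because $\textbf{T}$ is a left adjoint and so commutes with direct limits.
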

\begin{proof}It is an immediate consequence of Theorems \ref{thm: 2.8}, \ref{thm: 2.9} and \ref{thm: 2.10}.
\end{proof}
Next we describe finitely generated and finitely presented modules over $R\ltimes_{n}M$.
\begin{thm}\label{thm: 2.12}Let $(X, f)$ be a left $R\ltimes_{n}M$-module.
\begin{enumerate}\item  $(X, f)$ is a finitely generated left $R\ltimes_{n}M$-module if and only if  $\coker(f)$ is a finitely generated left $R$-module.\item  If $(X, f)$ is a finitely presented left $R\ltimes_{n}M$-module, then $\coker(f)$ is a finitely presented left $R$-module. The converse holds if $\ker(f)$ is a finitely generated left $R$-module.
\end{enumerate}
\end{thm}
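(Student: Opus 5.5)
For part (1) I will use the functor $\textbf{C}$ and the nilpotency condition built into the objects of $\Omega$. For the forward direction, take an epimorphism $\textbf{T}(R)^{m}\rightarrow (X,f)$ (Remark \ref{rem: 2.5}). Since $\textbf{C}$ is right exact and $\textbf{C}\textbf{T}=id$ (Proposition \ref{prop: 2.3}), this gives an epimorphism $R^{m}\rightarrow \coker(f)$.

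For the converse, choose $x_{1},\dots,x_{m}\in X$ whose images generate $\coker(f)$. Let $\theta:\textbf{T}(R^{m})\rightarrow (X,f)$ be the morphism corresponding under the adjunction $(\textbf{T},\textbf{U})$ to the map $R^{m}\rightarrow X$ sending the basis to the $x_{i}$. Let $(Q,q)$ be the cokernel of $\theta$ in $\Omega$. Then $\textbf{C}(Q,q)$ is a quotient of $\coker(f)$ by the image of $R^{m}$, hence $\coker(q)=0$, i.e. $q$ is onto. The key observation is that any object $(Q,q)$ with $q$ surjective is zero. Indeed, right exactness of $M\otimes_{R}-$ shows that $M^{\otimes_{R}j}\otimes q$ is onto for each $j$. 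Hence
$$Q=\im(q)=\im(q(M\otimes q))=\cdots=\im(q(M\otimes q)\cdots(M^{\otimes_{R}n}\otimes q))=0.$$
So $\theta$ is onto and $(X,f)$ is finitely generated.

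For the first assertion of (2), take an exact sequence $0\rightarrow (K,h)\rightarrow \textbf{T}(R^{m})\rightarrow (X,f)\rightarrow 0$ with $(K,h)$ finitely generated. Applying the right exact functor $\textbf{C}$ gives the exact sequence $\coker(h)\rightarrow R^{m}\rightarrow \coker(f)\rightarrow 0$. Here $\coker(h)$ is finitely generated by part (1), so $\coker(f)$ is finitely presented.

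For the converse, assume $\coker(f)$ is finitely presented and $\ker(f)$ is finitely generated. Build $\theta:\textbf{T}(F)\rightarrow(X,f)$ as in (1), with $F=R^{m}$, so that $F\rightarrow \coker(f)$ is onto. Then $\theta$ is onto, and the kernel $L$ of $F\rightarrow\coker(f)$ is finitely generated. Let $(K,h)=\ker\theta$. By (1) it suffices to show that $\coker(h)$ is finitely generated. Apply the snake lemma to the commutative diagram whose top row is $M\otimes_{R}K\rightarrow M\otimes_{R}\textbf{U}\textbf{T}(F)\rightarrow M\otimes_{R}X\rightarrow 0$ and whose bottom row is $0\rightarrow K\rightarrow \textbf{U}\textbf{T}(F)\rightarrow X\rightarrow 0$, with vertical maps $h,\mu,f$. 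The top row is exact (though not necessarily left exact). The version of the snake lemma for a right exact top row still yields an exact segment
$$\ker(f)\rightarrow \coker(h)\rightarrow \coker(\mu)=F\rightarrow \coker(f)\rightarrow 0.$$
The image of $\coker(h)$ in $F$ is $L$, which is finitely generated. The kernel of $\coker(h)\rightarrow F$ is a quotient of the finitely generated module $\ker(f)$. Hence $\coker(h)$ is an extension of finitely generated modules and is itself finitely generated. Then $(K,h)$ is finitely generated by (1), so $(X,f)$ is finitely presented.

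The main obstacle is this snake-lemma step: one must confirm that the connecting map $\ker(f)\rightarrow\coker(h)$ exists and the segment is exact even though $M\otimes_{R}K\rightarrow M\otimes_{R}\textbf{U}\textbf{T}(F)$ need not be injective. I would check this by a direct diagram chase rather than quoting the lemma.
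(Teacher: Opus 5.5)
Your proof is correct. Part (2) follows the paper's argument: apply $\textbf{C}$ to a presentation for the forward direction, and use the snake lemma on the same diagram for the converse.

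The step you flag as the main obstacle is harmless. The usual snake lemma needs only the top row $A'\to B'\to C'\to 0$ and the bottom row $0\to A\to B\to C$ to be exact, and the paper invokes it in exactly this form. You also make explicit something the paper leaves tacit: $L=\ker(F\to\coker(f))$ is finitely generated because $\coker(f)$ is finitely presented.

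The genuine difference is in the converse of (1).
\begin{itemize}
\item The paper uses the exact sequences $M\otimes_{R}\coker(f_{i-1})\to\coker(f_{i})\to\coker(f)\to 0$ of Lemma \ref{lem: 2.7}(1). It then chases elements to show successively that $\pi_{i}(x)\in\pi_{i}\bigl(\sum_{j}(R\ltimes_{n}M)x_{j}\bigr)$ for $i=2,\dots,n+1$. It finishes because $f_{n+1}=0$ makes $\pi_{n+1}$ the identity of $X$.
\item Your argument is Nakayama's lemma for the nilpotent ideal $\coprod_{i=1}^{n}M^{\otimes_{R}i}$, in categorical form. Right exactness of $\textbf{C}$ forces the cokernel $(Q,q)$ of $\theta$ to have $q$ surjective. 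Surjectivity then propagates along $q(M\otimes q)\cdots(M^{\otimes_{R}n}\otimes q)=0$, giving $Q=0$. This is the same device the paper uses later in Lemma \ref{lem: 3.3}.
\end{itemize}

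Your version is shorter and bypasses Lemma \ref{lem: 2.7} entirely. It also shows that the only inputs are right exactness and the nilpotency condition built into $\Omega$. The paper's version is more explicit: it writes each $x\in X$ in terms of the chosen generators. It also reuses machinery the paper already needs for Theorem \ref{thm: 2.8}.
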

\begin{proof}(1) $``\Rightarrow"$ There is an epimorphism $\theta: \textbf{T}(R)^{m}\rightarrow (X,f)$ with  $m\in\mathbb{N}$. Therefore we get the  epimorphism $\textbf{C}(\theta): \textbf{C}(\textbf{T}(R)^{m})\rightarrow \textbf{C}(X,f)$. Since $\textbf{C}(\textbf{T}(R)^{m})\cong R^{m}$, $\coker(f)=\textbf{C}(X,f)$ is a  finitely generated left $R$-module.

$``\Leftarrow"$  Let $f_{i}=f(M\otimes f) \cdots(M^{\otimes_{R}i-1}\otimes f): M^{\otimes_{R}i}\otimes_{R} X\rightarrow X$ and $\pi_{i}: X\rightarrow\coker(f_{i})$ be the canonical epimorphism. By Lemma \ref{lem: 2.7}(1), there exist exact sequences $$M\otimes_{R}\coker(f)\stackrel{\alpha_{2}}\longrightarrow \coker(f_{2})\longrightarrow \coker(f)\rightarrow 0,$$  $$0\rightarrow M\otimes_{R}\coker(f_{2})\stackrel{\alpha_{3}}\longrightarrow \coker(f_{3})\longrightarrow \coker(f)\rightarrow 0,$$ $$\cdots\cdots$$ $$0\rightarrow M\otimes_{R}\coker(f_{n})\stackrel{\alpha_{n+1}}\longrightarrow X\longrightarrow \coker(f)\rightarrow 0$$ such that $\alpha_{i}(M\otimes \pi_{i-1})=\pi_{i} f$. Let $\coker(f)=\sum\limits_{j=1}^{l}R\pi_{1}(x_{j})$ with $x_{j}\in X$ and $l\in \mathbb{N}$.  For any $x\in X$, $\pi_{1}(x)=\sum\limits_{j=1}^{l}r_{j}\pi_{1}(x_{j})=\pi_{1}(\sum\limits_{j=1}^{l}r_{j}x_{j})$
with $r_{j}\in R$.  Hence $x-\sum\limits_{j=1}^{l}r_{j}x_{j}\in \ker(\pi_{1})=\im(f)$. So $\pi_{2}(x-\sum\limits_{j=1}^{l}r_{j}x_{j})\in\im(\alpha_{2}).$ Thus there exists some $y\in M\otimes_{R}\coker(f)$ such that $\pi_{2}(x-\sum\limits_{j=1}^{l}r_{j}x_{j})=\alpha_{2}(y).$
Let $y=\sum\limits_{t=1}^{k}m_{t}\otimes\sum\limits_{j=1}^{l}s_{tj}\pi_{1}(x_{j})$ with $s_{tj}\in R$. Then $$\pi_{2}(x)=\pi_{2}(\sum_{j=1}^{l}r_{j}x_{j})+\alpha_{2}(y)=\pi_{2}(\sum_{j=1}^{l}r_{j}x_{j})+\alpha_{2}(\sum\limits_{t=1}^{k}m_{t}\otimes\sum\limits_{j=1}^{l}s_{tj}\pi_{1}(x_{j}))$$
$$=\pi_{2}(\sum_{j=1}^{l}r_{j}x_{j})+\pi_{2}(\sum\limits_{t=1}^{k}\sum_{j=1}^{l} f(m_{t}s_{tj}\otimes x_{j}))$$$$=\pi_{2}(\sum_{j=1}^{l}(r_{j},0,0,\cdots)x_{j}+\sum_{j=1}^{l}\sum\limits_{t=1}^{k}(0,m_{t}s_{tj},0,0,\cdots)x_{j})\in\pi_{2}(\sum_{j=1}^{l}(R\ltimes_{n}M)x_{j}).$$
Similarly, $\pi_{3}(x)\in\pi_{3}(\sum\limits_{j=1}^{l}(R\ltimes_{n}M)x_{j}),$ $\cdots$, $\pi_{n+1}(x)\in\pi_{n+1}(\sum\limits_{j=1}^{l}(R\ltimes_{n}M)x_{j}),$ i.e., $x\in\sum\limits_{j=1}^{l}(R\ltimes_{n}M)x_{j}.$
So $(X, f)=\sum\limits_{j=1}^{l}(R\ltimes_{n}M)x_{j}$ is a finitely generated left $R\ltimes_{n}M$-module.

(2) If $(X, f)$ is a finitely presented left $R\ltimes_{n}M$-module, then there is an exact sequence $ \textbf{T}(R)^{k}\rightarrow \textbf{T}(R)^{m}\rightarrow (X, f)\rightarrow 0$ in $R\ltimes_{n}M$-Mod with  $m,k\in\mathbb{N}$, which induces the exact sequence $ R^{k}\rightarrow R^{m}\rightarrow \coker(f)\rightarrow 0$. So $\coker(f)$ is a finitely presented left $R$-module.

Conversely, if $\coker(f)$ is a finitely presented left $R$-module and $\ker(f)$ is a finitely generated left $R$-module, then  $(X, f)$ is a finitely generated left $R\ltimes_{n}M$-module by (1). So there is an exact sequence $0\rightarrow (A,h)\rightarrow \textbf{T}(R)^{k}\rightarrow (X, f)\rightarrow 0$ with $k\in\mathbb{N}$.  Consider the commutative diagram with exact rows: $$\xymatrix{&M\otimes_{R}A\ar[r]\ar[d]_{h}&M\otimes_{R}(\coprod_{i=0}^{n}M^{\otimes_{R}i})^{k}\ar[d]_{\mu^{k}}\ar[r]&M\otimes_{R}X\ar[d]_{f}\ar[r]&0\\
0\ar[r]&A\ar[r]&(\coprod_{i=0}^{n}M^{\otimes_{R}i})^{k}\ar[r]&X\ar[r]&0.}$$ By the snake lemma, we get the exact sequence $$\ker(f)\rightarrow\coker(h)\rightarrow R^{k}\rightarrow\coker(f)\rightarrow 0.$$ Thus $\coker(h)$ is a finitely generated left $R$-module.  By (1), $(A,h)$ is a finitely generated left $R\ltimes_{n}M$-module. So $(X, f)$ is a finitely presented left $R\ltimes_{n}M$-module.
\end{proof}
\begin{cor}\label{cor: 2.13}Let $X$ be a left $R$-module. Then
\begin{enumerate}\item  $X$ is a finitely generated left $R$-module if and only if $\textbf{T}(X)$ is a finitely generated left $R\ltimes_{n}M$-module.
\item $X$ is a finitely presented left $R$-module if and only if $\textbf{T}(X)$ is a finitely presented left $R\ltimes_{n}M$-module.
\end{enumerate}
\end{cor}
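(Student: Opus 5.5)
The corollary follows from Theorem \ref{thm: 2.12} once we compute $\coker(\mu)$ and $\ker(\mu)$ for $\textbf{T}(X)=(\coprod_{i=0}^{n}(M^{\otimes_{R}i}\otimes_{R}X),\mu)$. Here $\mu\colon M\otimes_{R}\coprod_{i=0}^{n}(M^{\otimes_{R}i}\otimes_{R}X)\rightarrow \coprod_{i=0}^{n}(M^{\otimes_{R}i}\otimes_{R}X)$ shifts the component $M\otimes_{R}M^{\otimes_{R}i}\otimes_{R}X$ for $i\leq n-1$ identically onto the $(i+1)$-st summand and kills the top component $M\otimes_{R}M^{\otimes_{R}n}\otimes_{R}X=M^{\otimes_{R}n+1}\otimes_{R}X$. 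Hence $\im(\mu)=\coprod_{i=1}^{n}(M^{\otimes_{R}i}\otimes_{R}X)$, so $\coker(\mu)\cong X$, which is just $\textbf{C}\textbf{T}=id$ from Proposition \ref{prop: 2.3}. Also $\ker(\mu)=M^{\otimes_{R}n+1}\otimes_{R}X$.

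For (1), Theorem \ref{thm: 2.12}(1) says $\textbf{T}(X)$ is finitely generated if and only if $\coker(\mu)\cong X$ is finitely generated, and we are done.

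For (2), the forward direction of Theorem \ref{thm: 2.12}(2) gives: if $\textbf{T}(X)$ is finitely presented, then $\coker(\mu)\cong X$ is finitely presented. The converse is the only real obstacle, because the sufficient condition in Theorem \ref{thm: 2.12}(2) asks that $\ker(\mu)=M^{\otimes_{R}n+1}\otimes_{R}X$ be finitely generated, and that need not hold for arbitrary $M$. I would therefore bypass that criterion and argue directly. If $X$ is finitely presented, take an exact sequence $R^{k}\rightarrow R^{m}\rightarrow X\rightarrow 0$. The functor $\textbf{T}$ is right exact (Proposition \ref{prop: 2.3}), so this gives an exact sequence $\textbf{T}(R)^{k}\rightarrow \textbf{T}(R)^{m}\rightarrow \textbf{T}(X)\rightarrow 0$. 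Alternatively, $\textbf{T}(X)$ corresponds to $(R\ltimes_{n}M)\otimes_{R}X$ by Remark \ref{rem: 2.5}, and tensoring is right exact. Since $\textbf{T}(R)$ is the regular module $R\ltimes_{n}M$, this is a finite presentation of $\textbf{T}(X)$ over $R\ltimes_{n}M$.

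The one point to confirm is that $\textbf{T}$ commutes with finite direct sums, i.e.\ $\textbf{T}(R^{m})\cong\textbf{T}(R)^{m}$. This is immediate from the definition of $\textbf{T}$, since tensor products commute with finite direct sums.
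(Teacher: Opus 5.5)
Your proposal is correct. For (1) and for the forward implication of (2) you do what the paper does, namely apply Theorem~\ref{thm: 2.12} to $\textbf{T}(X)=(\coprod_{i=0}^{n}(M^{\otimes_{R}i}\otimes_{R}X),\mu)$ using $\coker(\mu)\cong X$.

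The paper's entire proof is the sentence that the corollary is an immediate consequence of Theorem~\ref{thm: 2.12}. For the converse of (2) that citation does not literally apply, as you noticed. The sufficient condition in Theorem~\ref{thm: 2.12}(2) needs $\ker(\mu)=M^{\otimes_{R}n+1}\otimes_{R}X$ to be finitely generated, and this can fail. For example, take $R=\mathbb{Z}$, $M=\mathbb{Q}$, $n=1$, $X=\mathbb{Z}$: then $\ker(\mu)\cong\mathbb{Q}\otimes_{\mathbb{Z}}\mathbb{Q}\cong\mathbb{Q}$, even though $\textbf{T}(\mathbb{Z})$ is the free regular module.

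Your direct argument handles this step. Right exactness of $\textbf{T}$ turns a presentation $R^{k}\rightarrow R^{m}\rightarrow X\rightarrow 0$ into $\textbf{T}(R)^{k}\rightarrow\textbf{T}(R)^{m}\rightarrow\textbf{T}(X)\rightarrow 0$; this is the base-change fact that $(R\ltimes_{n}M)\otimes_{R}-$ preserves finite presentation. One can also stay closer to the paper's wording. The kernel of $\textbf{T}(R)^{m}\rightarrow\textbf{T}(X)$ is the image of $\textbf{T}(K)$ with $K=\ker(R^{m}\rightarrow X)$ finitely generated, so it is finitely generated by Theorem~\ref{thm: 2.12}(1). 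Either way right exactness of $\textbf{T}$ is needed, and your version states it explicitly where the paper's one-line proof leaves it implicit.
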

\begin{proof}It is an immediate consequence of Theorem \ref{thm: 2.12}.
\end{proof}
\bigskip
\section {\bf Properties passing over to  $n$-trivial extensions of rings}
\bigskip
Using the foregoing results, we first characterize when the ring $R\ltimes_{n}M$ is left perfect (Noetherian, Artinian, coherent, hereditary), which generalizes \cite[Proposition 1.15]{FGR} and \cite[Propositions 1.5.1(b,c) and 2.3.3]{R}.
\begin{thm}\label{thm: 3.1}Let $R\ltimes_{n} M$ be an $n$-trivial extension. Then
\begin{enumerate}\item $R\ltimes_{n}M$ is a left  perfect ring if and only if $R$ is a  left  perfect  ring.\item  $R\ltimes_{n}M$ is a left Noetherian ring if and only if $R$ is a left Noetherian ring and $M$ is a finitely generated left $R$-module.\item  $R\ltimes_{n}M$ is a left Artinian ring if and only if $R$ is a left Artinian ring and $M$ is a finitely generated left $R$-module.\item $R\ltimes_{n}M$ is a left coherent ring if and only if $R$ is a left coherent ring, the sequence $M^{\otimes_{R}n+1}\otimes_{R} (\coprod_{i=0}^{n}M^{\otimes_{R}i})^{I}\stackrel{(M\otimes h) \cdots(M^{\otimes_{R}n}\otimes h)}\longrightarrow  M\otimes_{R} (\coprod_{i=0}^{n}M^{\otimes_{R}i})^{I}\stackrel{h}\longrightarrow (\coprod_{i=0}^{n}M^{\otimes_{R}i})^{I}$  is exact for any index set $I$ and $\coker(\gamma)$ is a flat right $R$-module, where $\gamma: M\otimes_{R} (\coprod_{i=0}^{n-1}M^{\otimes_{R}i})^{I}\rightarrow (\coprod_{i=1}^{n}M^{\otimes_{R}i})^{I}$ is defined in an obvious way.\item $R\ltimes_{n} M$ is a left hereditary ring if and only if  $R$ is a left hereditary ring, $M^{\otimes_{R}n+1}=0$, $M_{R}$ is flat, $M\otimes_{R}X$ is a projective left $R$-module for every left $R$-module $X$.
\end{enumerate}
\end{thm}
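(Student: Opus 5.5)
I will treat the five parts separately, using the identification of $R\ltimes_{n}M$-Mod with $\Omega$ and the results of Section 2. The basic tool throughout is the ideal $N=\coprod_{i=1}^{n}M^{\otimes_{R}i}$ of $A=R\ltimes_{n}M$. It satisfies $N^{n+1}=0$ and $A/N\cong R$. Moreover, the functors $\textbf{C}$ and $\textbf{Z}$ realize, respectively, the passage $X\mapsto X/NX$ and restriction of scalars along $A\to R$.

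For (1): since $N$ is nilpotent, it lies in the Jacobson radical, and $A/N\cong R$. A ring is left perfect iff it is semiperfect with left $T$-nilpotent radical. Both properties lift along a nilpotent ideal: idempotents lift modulo a nil ideal, and $J(A)/N=J(R)$. Left $T$-nilpotence of $J(A)$ then follows from that of $J(R)$ together with $N^{n+1}=0$. Alternatively, I would use Bass' characterization (every flat left module is projective) together with Theorems \ref{thm: 2.8} and \ref{thm: 2.10}, whose conditions differ only in "$\coker(f)$ projective" versus "$\coker(f)$ flat"; this gives the forward direction directly.

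For (2) and (3): if $A$ is left Noetherian (resp. Artinian), then so is its quotient $R$. Also, $\textbf{Z}(M)=(0,M,0,\ldots)A/N^2\cdots$; more simply, $N/N^{2}\cong M$ is a finitely generated left $A$-module annihilated by $N$, hence finitely generated over $R$. Conversely, filter ${}_AA$ by $N^{k}=\coprod_{i\ge k}M^{\otimes_{R}i}$. Each factor $N^{k}/N^{k+1}\cong M^{\otimes_{R}k}$ is an $A$-module killed by $N$. The key point is that $M^{\otimes_{R}k}$ is finitely generated as a left $R$-module: if $M=\sum Rm_{j}$, then $M\otimes_{R}M^{\otimes k-1}$ is generated by the elements $m_{j}\otimes(\text{generators})$. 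Hence each factor is Noetherian (Artinian) over $R$, and therefore over $A$, so $A$ is.

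For (4): $A$ is left coherent iff every direct product $A^{I}$ is a flat right $A$-module (Chase's theorem). I would apply the right-module version of Theorem \ref{thm: 2.10} to $A^{I}=\textbf{T}(R)^{I}$ as a right module. Note that products do not commute with tensors, so the structure map $h$ on $(\coprod M^{\otimes i})^{I}$ is the product map, not of $\textbf{T}$-form. Theorem \ref{thm: 2.10} turns flatness into two conditions: exactness of the displayed sequence, and flatness of the cokernel of the structure map. That cokernel is $R^{I}$ modulo the image of the relevant map; the $R^{I}$ component splits off, and what remains is $\coker(\gamma)$ on the $\coprod_{i\ge1}$ part. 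This has to be interpreted carefully, and it is the step I expect to be the main obstacle: identifying $\coker(h)$ with $R^{I}\oplus\coker(\gamma)$ and matching the statement's sides correctly. Flatness of $R^{I}_{R}$ for all $I$ gives coherence of $R$.

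For (5): $A$ is left hereditary iff every submodule of a projective is projective, and by Theorem \ref{thm: 2.8} the projectives are the $\textbf{T}(P)$. For necessity, I would use the left ideal $N$: it must be projective, hence of the form $\textbf{T}(Q)$, yet it is killed by $N^{n}$. Since $\textbf{T}(Q)$ contains $M^{\otimes n}\otimes Q$ acted on faithfully by $N^{n}$, this forces the required vanishing, giving $M^{\otimes n+1}=0$. Next, $\textbf{Z}(X)$ has projective dimension at most $1$. By Lemma \ref{lem: 2.6}(1) its first syzygy is $(\coprod_{i\ge1}M^{\otimes i}\otimes X,h)$, which must be projective; Theorem \ref{thm: 2.8} then forces $M\otimes_{R}X$ projective and the exactness conditions, which yield Tor-vanishing, i.e. $M_{R}$ flat. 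Hereditariness of $R$ comes from submodules $\textbf{Z}(P')\subseteq \textbf{T}(P)$, or more simply from $\mathrm{gldim}\,R\le\mathrm{gldim}\,A$ via $\textbf{T}$, which is exact when $M$ is flat. For sufficiency, I would show $pd\,\textbf{Z}(X)\le1$ through the syzygy above, and then use the filtration of Lemma \ref{lem: 2.6}(3) to conclude $pd(X,f)\le1$ for every module, so that $A$ is left hereditary.
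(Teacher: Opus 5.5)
Parts (1)--(4) are sound. For (1) and (2)/(3) you argue ring-theoretically: you lift perfectness along the nilpotent ideal $N=\coprod_{i\ge1}M^{\otimes_R i}$, and you filter ${}_AA$ by the $N^k$, whose factors $M^{\otimes_R k}$ are finitely generated over $R$. The paper instead goes through Theorems \ref{thm: 2.8}, \ref{thm: 2.10} and \ref{thm: 2.12]}. Your route is more elementary and proves (3) directly rather than from (1) and (2). Your (4) is the paper's argument via Chase's theorem. The splitting $\coker(h)\cong R^{I}\oplus\coker(\gamma)$ is immediate there, because the right structure map raises degree by one.

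The gap is in the necessity half of (5). The sequence $0\to(\coprod_{i\ge1}M^{\otimes_R i}\otimes_R X,h)\to\textbf{T}(X)\to\textbf{Z}(X)\to0$ of Lemma \ref{lem: 2.6}(1) is not a projective presentation. $\textbf{T}(X)$ is projective only when $X$ is (Corollary \ref{cor: 2.11}), so hereditariness of $A$ says nothing about this kernel.

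Even granting projectivity, it would not give flatness. For $f=0$ the structure map is $m\otimes(y_1,\dots,y_n)\mapsto(0,m\otimes y_1,\dots,m\otimes y_{n-1})$, with kernel $M^{\otimes_R n+1}\otimes_R X$. So the exactness condition of Theorem \ref{thm: 2.8} reduces to $M^{\otimes_R n+1}\otimes_R X=0$, and no Tor group appears. Your proof that $R$ is hereditary goes through flatness of $M_R$, and the alternative via $\textbf{Z}(P')\subseteq\textbf{T}(P)$ is not justified. Hence three of the four necessary conditions are left unproved.

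The repair is to use an honest presentation. Take $0\to X_1\to P\to X\to0$ with $P$ projective, and $0\to(K,h)\to\textbf{T}(P)\to\textbf{Z}(X)\to0$.

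First, $(K,h)$ is projective, being a submodule of $\textbf{T}(P)$. As $R$-modules $K=X_1\oplus\coprod_{i=1}^{n}M^{\otimes_R i}\otimes_R P$, and $h$ acts on $M\otimes_R X_1$ by $M\otimes(X_1\hookrightarrow P)$ into degree one and by the shift elsewhere. Hence $\coker(h)\cong X_1\oplus(M\otimes_R X)$.

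Next, use $M^{\otimes_R n+1}=0$, which your argument with the ideal $N$ does give once you note $\textbf{C}(N)=N/N^{2}\cong M$. With this, the exactness condition of Theorem \ref{thm: 2.8} says exactly that $M\otimes_R X_1\to M\otimes_R P$ is injective, i.e.\ $\Tor^R_1(M,X)=0$.

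This yields $R$ hereditary, $M\otimes_R X$ projective and $M_R$ flat at once. The paper reaches the same conclusions from the left ideals $\coprod_i M^{\otimes_R i}I$ and a snake-lemma argument on $\textbf{T}(R)^{(I)}\to\textbf{Z}(X)$.

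In the sufficiency direction the same misreading of $\textbf{T}(X)$ as projective appears, but it is harmless once stated correctly. The kernel is $\textbf{T}(M\otimes_R X)$, using $M^{\otimes_R n+1}=0$, and this is projective. $\textbf{T}(X)$ has projective dimension at most $pd(_{R}X)\le 1$, because $\textbf{T}$ is exact and preserves projectives when $M_R$ is flat. Hence $\textbf{Z}(X)$ has projective dimension at most $1$. With that, your filtration argument via Lemma \ref{lem: 2.6}(3) is a valid alternative to the paper's direct check on left ideals.
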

\begin{proof}(1) $``\Rightarrow"$  Let $Y$ be a flat left $R$-module. Then $\textbf{T}(Y)$ is a flat left $R\ltimes_{n}M$-module by Corollary \ref{cor: 2.11}(3) and so is a projective  left $R\ltimes_{n}M$-module. Thus $Y$ is a projective  left $R$-module by Corollary \ref{cor: 2.11}(1). Hence $R$ is a  left  perfect  ring.

$``\Leftarrow"$ Let $(X,f)$ be a  flat  left $R\ltimes_{n}M$-module. By Theorem \ref{thm: 2.10}, the sequence $M^{\otimes_{R}n+1}\otimes_{R} X\stackrel{(M\otimes f) \cdots(M^{\otimes_{R}n}\otimes f)}\longrightarrow  M\otimes_{R} X\stackrel{f}\longrightarrow X$ is exact and $\coker(f)$ is a  flat  left $R$-module and so is a  projective left $R$-module. Thus $(X,f)$ is a projective left $R\ltimes_{n}M$-module by Theorem  \ref{thm: 2.8}. Hence $R\ltimes_{n}M$ is a left perfect ring.

(2) $``\Rightarrow"$  Let $Y$ be a finitely generated left $R$-module. Then $\textbf{T}(Y)$ is a finitely generated left $R\ltimes_{n}M$-module by Corollary \ref{cor: 2.13}(1) and so is finitely presented. Hence $Y$ is finitely presented  by Corollary \ref{cor: 2.13}(2). Thus $R$ is a left Noetherian ring. Also $\coprod_{i=1}^{n}M^{\otimes_{R}i}$ is a left ideal of $R\ltimes_{n}M$ and so is finitely generated. Thus  $M$ is a finitely generated left $R$-module.

$``\Leftarrow"$ Let $(X,f)$ be a finitely generated left $R\ltimes_{n}M$-module. Then $\coker(f)$ is a finitely generated left $R$-module  by  Theorem  \ref{thm: 2.12}(1)  and so is finitely presented. Since  $X$ is a finitely generated left $R$-module, $\ker(f)$ is a finitely generated left $R$-module. So $(X,\alpha)$ is finitely presented  by Theorem  \ref{thm: 2.12}(2). Hence $R\ltimes_{n}M$ is a left Noetherian ring.

(3) follows from (1) and (2).

(4) $R\ltimes_{n}M$ is a left coherent ring if and only if $\textbf{T}(R)^{I}\cong((\coprod_{i=0}^{n}M^{\otimes_{R}i})^{I},h)$  is a flat right $R\ltimes_{n}M$-module for any index set $I$ by \cite[Theorem 2.1]{C} if and only if  $\coker(h)\cong R^{I}\oplus \coker(\gamma)$ is a flat right $R$-module and the sequence $M^{\otimes_{R}n+1}\otimes_{R} (\coprod_{i=0}^{n}M^{\otimes_{R}i})^{I}\stackrel{(M\otimes h) \cdots(M^{\otimes_{R}n}\otimes h)}\longrightarrow  M\otimes_{R} (\coprod_{i=0}^{n}M^{\otimes_{R}i})^{I}\stackrel{h}\longrightarrow (\coprod_{i=0}^{n}M^{\otimes_{R}i})^{I}$ is exact  by Theorem \ref{thm: 2.10}  if and only if $R$ is a left coherent ring, $\coker(\gamma)$ is a flat right $R$-module and the sequence $M^{\otimes_{R}n+1}\otimes_{R} (\coprod_{i=0}^{n}M^{\otimes_{R}i})^{I}\stackrel{(M\otimes h) \cdots(M^{\otimes_{R}n}\otimes h)}\longrightarrow  M\otimes_{R} (\coprod_{i=0}^{n}M^{\otimes_{R}i})^{I}\stackrel{h}\longrightarrow (\coprod_{i=0}^{n}M^{\otimes_{R}i})^{I}$  is exact.

(5) $``\Rightarrow"$ Note that $\coprod_{i=1}^{n}M^{\otimes_{R}i}$ is a left ideal of $R\ltimes_{n} M$ and so is projective. By Theorem \ref{thm: 2.8}, the sequence $M^{\otimes_{R}n+1}\otimes_{R} \coprod_{i=1}^{n}M^{\otimes_{R}i}\rightarrow M\otimes_{R}\coprod_{i=1}^{n}M^{\otimes_{R}i}\rightarrow \coprod_{i=1}^{n}M^{\otimes_{R}i}$ is exact and $_{R}M$ is projective. Thus $M^{\otimes_{R}n+1}=0$.

Let $I$ be any left ideal of $R$. Then $\coprod_{i=0}^{n}M^{\otimes_{R}i}I$ is a left ideal of $R\ltimes_{n} M$ and so is projective. By Theorem \ref{thm: 2.8}, the sequence $0=M^{\otimes_{R}n+1}\otimes_{R} \coprod_{i=0}^{n}M^{\otimes_{R}i}I\rightarrow M\otimes_{R}\coprod_{i=0}^{n}M^{\otimes_{R}i}I\rightarrow \coprod_{i=0}^{n}M^{\otimes_{R}i}I$ is exact and $I$ is projective. Thus $M \otimes_{R}I\rightarrow MI$ is a monomorphism. So $M_{R}$ is flat and $R$ is a left hereditary ring.

For every left $R$-module $X$, there is an exact sequence $0\rightarrow (K,h)\rightarrow \textbf{T}(R)^{(I)}\rightarrow\textbf{Z}(X)\rightarrow 0$. By the snake lemma, we get the exact sequence $0\rightarrow M\otimes_{R}X\rightarrow \coker(h)$. Since $(K,h)$ is a projective left $R\ltimes_{n} M$-module, $\coker(h)$ is a projective left $R$-module. Thus  $M\otimes_{R}X$ is projective left $R$-module.

$``\Leftarrow"$ Let $(X,f)$ be a left ideal of $R\ltimes_{n} M$. Then  there is an exact sequence $0\rightarrow (X,f)\rightarrow \textbf{T}(R)\rightarrow (L,g)\rightarrow 0$. By the snake lemma, we get the exact sequence $$0\rightarrow \ker(f)\rightarrow 0\rightarrow \ker(g)\rightarrow \coker(f)\rightarrow R\rightarrow \coker(g)\rightarrow 0.$$ Thus $f$ is a monomorphism. Since $M\otimes_{R} L$ is projective and $\ker(g)$ is a submodule of $M\otimes_{R} L$, we have $\ker(g)$  is projective. So  $\coker(f)$ is projective. Hence $(X,f)$ is projective by Theorem \ref{thm: 2.8}. Thus $R\ltimes_{n} M$ is a left hereditary ring.
\end{proof}
Reiten characterized when a trivial extension $R\ltimes M$ is a left self-injective  ring \cite[Theorem 1.4.1]{R}. Next we give a sufficient and necessary  condition  for $R\ltimes_{n} M$ to be left self-injective in a different way.

Let $A_{R}$ and $_{R}B$ be modules. Write $Ann_{B}(A)=\{b\in B: a\otimes b=0$ in $A\otimes_{R}B$ for all $a\in A\}$.

Define $\tau_{1}:\ _{R}R\rightarrow \Hom_{R}(M,M)$  by $\tau_{1}(r)(m)=mr$ and  $\tau_{i}: M^{\otimes_{R} i-1}\rightarrow\Hom_{R}(M,M^{\otimes_{R} i})$ by $\tau_{i}(m)(x)=x\otimes m$ for $i\geq 2$.
\begin{thm}\label{thm: 3.2}$R\ltimes_{n} M$ is a left self-injective  ring if and only if the following conditions are satisfied:
\begin{enumerate}\item $M$ and $\coprod_{i=0}^{n-1}Ann_{M^{\otimes_{R}i}}(M)$ are injective left $R$-modules,\item $\Hom_{R}(M, Ann_{R}(M^{\otimes_{R} n}))=0$, \item $\tau_{i}: M^{\otimes_{R} i-1}\rightarrow\Hom_{R}(M,M^{\otimes_{R} i})$  is an epimorphism  for any $1\leq i\leq n$.
\end{enumerate}
\end{thm}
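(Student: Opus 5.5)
The plan is to apply Theorem \ref{thm: 2.9} to the regular module. $R\ltimes_{n}M$ is left self-injective if and only if the left module $\textbf{T}(R)$ is injective. By Remark \ref{rem: 2.5} and the adjunction, $\textbf{T}(R)$ corresponds in $\Upsilon$ to $[Y,g]$, where $Y=\coprod_{i=0}^{n}M^{\otimes_{R}i}$ and $g:Y\rightarrow\Hom_{R}(M,Y)$ is the adjoint of the shift $\mu$. Explicitly,
$$g(x_{0},\ldots,x_{n})(m)=(0,\,mx_{0},\,m\otimes x_{1},\ldots,m\otimes x_{n-1}).$$
Theorem \ref{thm: 2.9} then says that $\textbf{T}(R)$ is injective if and only if two things hold. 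First, $\ker(g)$ is an injective left $R$-module. Second, the sequence $Y\stackrel{g}\rightarrow\Hom_{R}(M,Y)\stackrel{g'}\rightarrow\Hom_{R}^{n+1}(M,Y)$ is exact, where $g'$ is the $n$-fold composite of the maps $\Hom_{R}^{j}(M,g)$. The proof consists of translating these two requirements into (1)--(3), degree by degree.

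\textbf{Step 1: the kernel.} From the formula above,
$$\ker(g)=\Bigl(\coprod_{i=0}^{n-1}Ann_{M^{\otimes_{R}i}}(M)\Bigr)\oplus M^{\otimes_{R}n}.$$
Hence $\ker(g)$ is injective if and only if $\coprod_{i=0}^{n-1}Ann_{M^{\otimes_{R}i}}(M)$ and $M^{\otimes_{R}n}$ are both injective.

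\textbf{Step 2: exactness.} An element $\varphi\in\Hom_{R}(M,Y)$ is a tuple $(\varphi_{0},\ldots,\varphi_{n})$ with $\varphi_{k}:M\rightarrow M^{\otimes_{R}k}$. I will compute $g'(\varphi)$ explicitly. Iterating the shift shows that $g'(\varphi)=0$ says exactly that, after $n$ further tensorings by $M$ on the left, only the component $\varphi_{0}$ survives, and it lands in degree $n$. So $g'(\varphi)=0$ should be equivalent to $\varphi_{0}(M)\subseteq Ann_{R}(M^{\otimes_{R}n})$, where the annihilator is taken on the appropriate side. Thus $\ker(g')=\Hom_{R}(M,Ann_{R}(M^{\otimes_{R}n}))\oplus\coprod_{k=1}^{n}\Hom_{R}(M,M^{\otimes_{R}k})$.

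On the other hand, $\im(g)$ consists of the tuples with $\varphi_{0}=0$ and $\varphi_{k}=\tau_{k}(x_{k-1})$ for $k\geq 1$. Comparing the two descriptions component by component, $\im(g)=\ker(g')$ holds if and only if $\Hom_{R}(M,Ann_{R}(M^{\otimes_{R}n}))=0$, which is (2), and every $\tau_{k}$ ($1\le k\le n$) is onto, which is (3).

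\textbf{Step 3: reconciling $M^{\otimes_{R}n}$ with $M$.} Steps 1 and 2 characterize self-injectivity by (2), (3), the injectivity of $\coprod Ann_{M^{\otimes_{R}i}}(M)$, and the injectivity of $M^{\otimes_{R}n}$. Condition (1), however, asks for $M$ to be injective, so the remaining task is to show that $M^{\otimes_{R}n}$ is injective if and only if $M$ is, given the other conditions.

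For the forward direction, use (3) of Theorem \ref{thm: 2.9}: $\textbf{T}(R)\cong\textbf{H}(E)$ with $E=\ker(g)$. Reading off degree $1$ (or using the split sequences in the proof of (2)$\Rightarrow$(3) there) should exhibit $M$ as a direct summand of a module built from $\Hom_{R}(M,-)$ applied to injective summands, hence injective.

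For the converse, I would use the surjections $\tau_{i}$ together with the split exact sequences $0\rightarrow\ker(g)\rightarrow\ker(g_{i+1})\rightarrow\Hom_{R}(M,\ker(g_{i}))\rightarrow0$ from Lemma \ref{lem: 2.7}(2). The aim is to propagate injectivity from $M$ and the annihilator modules up to $M^{\otimes_{R}n}$, noting that $\Hom_{R}(M,E')$ is injective whenever $E'$ is injective and $M_{R}$ is flat.

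I expect this last step to be the main obstacle. I would first try to extract directly from $\textbf{T}(R)\cong\textbf{H}(\ker g)$ a decomposition of $M$ and of $M^{\otimes_{R}n}$ in terms of the same injective pieces.
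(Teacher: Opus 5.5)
\textbf{Steps 1--2.} These are the paper's proof. It too applies Theorem 2.9 to the regular module, viewed as $[\coprod_{i=0}^{n}M^{\otimes_{R}i},\xi]$ in $\Upsilon$. It then reads off (2) and (3) from the same degreewise description of $\im(\xi)$ and of the kernel of the $n$-fold composite.

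Your kernel is the correct one. $\xi$ never sees the top component, so
$$\ker(\xi)=\coprod_{i=0}^{n-1}Ann_{M^{\otimes_{R}i}}(M)\oplus M^{\otimes_{R}n}.$$
The paper writes ``$\oplus M$'' at this point, which is right only for $n=1$. That slip is the only reason condition (1) mentions $M$. The paper contains nothing corresponding to your Step 3.

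\textbf{Step 3 is a genuine gap.} The tools you name do not close it, and there is no general reason for it to hold. Read degreewise, the split sequences of Lemma 2.7(2) for $\textbf{T}(R)$ are just restrictions of the maps $\tau_{j+1}$. By (3), $\tau_{j+1}$ is onto with kernel $Ann_{M^{\otimes_{R}j}}(M)$, so once the annihilators are injective you get
$$M^{\otimes_{R}j}\cong Ann_{M^{\otimes_{R}j}}(M)\oplus\Hom_{R}(M,M^{\otimes_{R}j+1})\qquad(1\le j\le n-1).$$
Here $\Hom_{R}(M,-)$ carries the left structure coming from $M_{R}$. The isomorphism $\textbf{T}(R)\cong\textbf{H}(\ker g)$ likewise only expresses ${}_{R}(R\ltimes_{n}M)$ through the modules $\Hom_{R}^{i}(M,\ker g)$.

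This links $M^{\otimes_{R}j}$ to $\Hom_{R}(M,M^{\otimes_{R}j+1})$, not to $M^{\otimes_{R}j+1}$. Each direction of your proposed bridge then fails:
\begin{itemize}
\item Descending from $M^{\otimes_{R}n}$ to $M$ needs $\Hom_{R}(M,-)$ to preserve injectivity at each stage. That is essentially flatness of $M_{R}$, which you invoke but which is not a hypothesis.
\item Climbing from $M$ to $M^{\otimes_{R}n}$ is worse. As a left module, $M^{\otimes_{R}n}$ is only a quotient of a direct sum of copies of ${}_{R}M$ (apply $M\otimes_{R}-$ to a free cover of $M^{\otimes_{R}n-1}$). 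So its injectivity is not controlled by that of $M$. It would be if, for example, ${}_{R}M$ were finitely generated projective.
\end{itemize}
Conditions (2) and (3) supply neither ingredient.

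\textbf{What is actually proved.} Your Steps 1--2 prove, and so does the paper's argument once its kernel is corrected, the theorem with $M^{\otimes_{R}n}$ in place of $M$ in (1). That coincides with the stated theorem only for $n=1$, which is Reiten's case. For $n\ge 2$, state and prove that corrected version rather than trying to finish Step 3.
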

\begin{proof}By Theorem \ref{thm: 2.9}, $R\ltimes_{n} M$ is a left self-injective  ring if and only if $[\coprod_{i=0}^{n}M^{\otimes_{R}i},\xi]$ is an injective left $R\ltimes_{n} M$-module if and only if $\ker(\xi)$ is an  injective left $R$-module and the sequence of left $R$-modules $$\coprod_{i=0}^{n}M^{\otimes_{R}i}\stackrel{\xi}\longrightarrow\coprod_{i=0}^{n}\Hom_{R}(M,M^{\otimes_{R}i})\stackrel{{\rm Hom}_{R}^{n}(M,\xi)\cdots{\rm Hom}_{R}(M,\xi)}\longrightarrow\coprod_{i=0}^{n}\Hom^{n+1}_{R}(M,M^{\otimes_{R}i})$$ is exact, where $\xi$ is defined by $$\xi(m_{0},m_{1}, ..., m_{n})=(0,\gamma_{1}, ..., \gamma_{n}), \gamma_{1}(x)=xm_{0},\gamma_{2}(x)=x\otimes m_{1},\cdots,\gamma_{n}(x)=x\otimes m_{n-1}.$$ Then it is easy to check that $\Hom_{R}^{n}(M,\xi)\cdots\Hom_{R}(M,\xi)(g_{0}, ..., g_{n})=(0,0, ...,0,\delta_{n}),$ where $\delta_{n}(m_{1})(m_{2})...( m_{n+1})=(m_{n+1}\otimes m_{n}\otimes\cdots\otimes m_{2})g_{0}(m_{1}).$
Note that $$\ker(\xi)=\coprod_{i=0}^{n-1}Ann_{M^{\otimes_{R}i}}(M)\oplus M,$$ {\Small$$ \im(\xi)=\{\gamma_{1}\in\Hom_{R}(M,M):\gamma_{1}(x)=xm_{0}, m_{0}\in R\}\coprod_{i=2}^{n}\{\gamma_{i}\in\Hom_{R}(M,M^{\otimes_{R}i}):\gamma_{i}(x)=x\otimes m_{i}, m_{i}\in M^{\otimes_{R}i-1}\},$$} {\small $$
\ker(\Hom_{R}^{n}(M,\xi)\cdots\Hom_{R}(M,\xi))=\{g_{0}\in\Hom_{R}(M,R): M^{\otimes_{R} n} \im(g_{0})=0\} \coprod_{i=1}^{n}\Hom_{R}(M,M^{\otimes_{R}i}).$$}
Therefore $R\ltimes_{n} M$ is a left self-injective  ring if and only if $\coprod_{i=0}^{n-1}Ann_{M^{\otimes_{R}i}}(M)\oplus M$ is an injective left $R$-module, $\{g_{0}\in\Hom_{R}(M,R): M^{\otimes n} \im(g_{0})=0\}=0$, $\{\gamma_{1}\in\Hom_{R}(M,M):\gamma_{1}(x)=xm_{0}, m_{0}\in R\}=\Hom_{R}(M,M)$ and $\coprod_{i=2}^{n}\{\gamma_{i}\in\Hom_{R}(M,M^{\otimes_{R}i}):\gamma_{i}(x)=x\otimes m_{i}, m_{i}\in M^{\otimes_{R}i-1}\}=\coprod_{i=2}^{n}\Hom_{R}(M,M^{\otimes_{R}i})$ if and only if $M$ and $\coprod_{i=0}^{n-1}Ann_{M^{\otimes_{R}i}}(M)$ are injective left $R$-modules, $\Hom_{R}(M, Ann_{R}(M^{\otimes_{R} n}))=0$ and $\tau_{i}: M^{\otimes_{R} i-1}\rightarrow\Hom_{R}(M,M^{\otimes_{R} i})$  is an epimorphism  for any $1\leq i\leq n$.
\end{proof}
\begin{lem}\label{lem: 3.3}The following conditions are equivalent for a left $R\ltimes_{n}M$-module $(X,f)$:
\begin{enumerate}\item $(X, f)$ is a simple left $R\ltimes_{n}M$-module.\item $X$ is a simple left $R$-module and $ f=0$. \item $X$ is a simple left $R$-module.
\end{enumerate}
\end{lem}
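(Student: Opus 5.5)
I would prove the cycle (1)$\Rightarrow$(2)$\Rightarrow$(3)$\Rightarrow$(1). The key tools are the filtration of Lemma \ref{lem: 2.6}(3) and the observation that $R\ltimes_{n}M$-submodules of $(X,f)$ are exactly the $R$-submodules $X'\subseteq X$ with $f(M\otimes X')\subseteq X'$.

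\emph{(1)$\Rightarrow$(2).} Let $(X,f)$ be simple. Since $f(M\otimes f)\cdots(M^{\otimes_{R}n}\otimes f)=0$, the submodule $\im(f)$ cannot equal $X$ when $X\neq 0$. Otherwise iterating would give $X=\im(f_{j})$ for all $j$, where $f_{j}=f(M\otimes f)\cdots(M^{\otimes_{R}j-1}\otimes f)$ as in Lemma \ref{lem: 2.7}. Each such equality holds because $\im(f_{j+1})=f(M\otimes\im(f_{j}))$ (using right exactness of $M\otimes_R-$). At $j=n+1$ this gives $X=0$, a contradiction. Concretely, I would use the first exact sequence of Lemma \ref{lem: 2.6}(3), $0\to(\im(f),u_{1})\to(X,f)\to\textbf{Z}(\coker(f))\to0$. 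Here $(\im(f),u_{1})$ is a proper submodule by the nilpotency argument, so by simplicity $\im(f)=0$, that is, $f=0$. Then $(X,f)=\textbf{Z}(X)$, and every $R$-submodule of $X$ is an $R\ltimes_{n}M$-submodule (the compatibility condition is vacuous when $f=0$). Hence $X$ is a simple $R$-module.

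\emph{(2)$\Rightarrow$(3)} is trivial.

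\emph{(3)$\Rightarrow$(1).} Suppose $X$ is a simple $R$-module. Any nonzero $R\ltimes_{n}M$-submodule of $(X,f)$ is in particular a nonzero $R$-submodule of $X$, hence equals $X$. Also $(X,f)\neq0$ since $X\neq0$. So $(X,f)$ is simple.

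The only point needing care is the nilpotency step in (1)$\Rightarrow$(2), namely showing $\im(f)\neq X$ for $X\neq 0$. I would do it directly. If $\im(f)=X$, then $M\otimes_{R}X\to M\otimes_{R}\im(f)$ is surjective, so $\im(f(M\otimes f))=f(M\otimes\im(f))=f(M\otimes X)=X$. Inductively $\im(f_{n+1})=X$, but $f_{n+1}=0$ by the definition of $\Omega$, so $X=0$. (Alternatively, (3)$\Rightarrow$(2) also follows from the same argument, since $\im(f)$ is an $R$-submodule of the simple module $X$ and cannot be all of $X$.)
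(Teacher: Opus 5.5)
Your proposal is correct and follows the paper's proof: the same exact sequence $0\to(\im(f),u_{1})\to(X,f)\to\textbf{Z}(\coker(f))\to 0$ from Lemma \ref{lem: 2.6}(3), combined with the nilpotency condition, forces $f=0$, and the remaining implications use the same submodule arguments. Your right-exactness argument that $\im(f)=X$ would give $\im(f_{n+1})=X$ fills in a step the paper states without justification.
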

\begin{proof}(1) $\Rightarrow$ (2)  By Lemma \ref{lem: 2.6}(3), there is an epimorphism $(X, f)\rightarrow  \textbf{Z}(\coker(f))$. So $\im(f)=X$ or $\im(f)=0$. If $\im(f)=X$, then $ f$ is an epimorphism. Since $f(M\otimes f)\cdots(M^{\otimes_{R}n}\otimes f)=0$,  $X=0$, a contradiction. Thus $ f= 0$.

Let $Y$ be a submodule of $X$. Then $(Y,0)$ is a submodule of $(X,0)$. So $Y=X$ or $Y=0$.  Thus $X$ is a simple  left $R$-module.

(2) $\Rightarrow$ (3) is trivial.

(3) $\Rightarrow$ (1) Let $(B,g)$ be a submodule of $(X, f)$. Then  $B$ is a submodule of $X$ and so $B=0$ or $B=X$. Hence $(B,g)=0$ or $(B,g)=(X, f)$. Thus $(X, f)$ is a simple left $R\ltimes_{n}M$-module.
\end{proof}
Recall that $R$ is a \emph{left $SF$ ring} if every simple left $R$-module is flat.
$R$ is called a \emph{left $V$-ring} if every simple left $R$-module is injective.
$R$ is called a \emph{left Kasch ring} \cite{L} if every simple left $R$-module can be embedded in $_{R}R$.
\begin{thm}\label{thm: 3.4}Let $R\ltimes_{n}M$ be an $n$-trivial  extension.
\begin{enumerate}\item $R\ltimes_{n} M$ is a semisimple Artinian ring if and only if  $R$ is a semisimple Artinian ring and $M\otimes_{R}X=0$ for any simple left $R$-module $X$.\item $R\ltimes_{n}M$ is a left $SF$ ring if and only if $R$ is a left $SF$ ring and $M\otimes_{R}X=0$ for any simple left $R$-module $X$.\item $R\ltimes_{n}M$ is a left $V$-ring if and only if $R$ is a left $V$-ring and $\Hom_{R}(M,X)=0$ for any simple left $R$-module $X$.\item  $R\ltimes_{n}M$ is a left Kasch ring  if and only if
$\Hom_{R}(X, \coprod_{i=0}^{n-1}Ann_{M^{\otimes_{R}i}}(M)\oplus M)\neq 0$  for any simple left $R$-module $X$.
\end{enumerate}
\end{thm}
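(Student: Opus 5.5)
The plan is to reduce all four statements to the description of simple modules in Lemma \ref{lem: 3.3}. By that lemma, the simple left $R\ltimes_{n}M$-modules are exactly the $\textbf{Z}(X)=(X,0)$ with $X$ a simple left $R$-module. In the $\Upsilon$ picture they are the $[X,0]$ with $X$ simple. Each ring property is a condition on simple modules, or on $R\ltimes_{n}M$ itself, so I will test these conditions using Theorems \ref{thm: 2.8}, \ref{thm: 2.9}, \ref{thm: 2.10} and the computation of $\ker(\xi)$ from the proof of Theorem \ref{thm: 3.2}.

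For (2), $R\ltimes_{n}M$ is left $SF$ iff every $(X,0)$ with $X$ simple is flat. By Theorem \ref{thm: 2.10} this holds iff $\coker(0)=X$ is flat and $M^{\otimes n+1}\otimes X\to M\otimes X\xrightarrow{0}X$ is exact. The first map is $(M\otimes 0)\cdots=0$, so exactness means $M\otimes_{R}X=0$. Note that $M\otimes_R X=0$ forces the first term to vanish as well, so the condition is consistent.

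For (1), use that a ring is semisimple Artinian iff every simple module is projective. Theorem \ref{thm: 2.8} gives the same two conditions, with "projective" in place of "flat". Hence $R\ltimes_{n}M$ is semisimple iff every simple $R$-module is projective and $M\otimes X=0$ for all simple $X$, i.e. $R$ is semisimple and the tensor condition holds.

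For (3), dually apply Theorem \ref{thm: 2.9} to $[X,0]$. We get $\ker(0)=X$ injective, and $X\xrightarrow{0}\Hom_{R}(M,X)\to\Hom^{n+1}_{R}(M,X)$ exact, where the second map is a composite involving $\Hom(M,0)=0$. Exactness then means $\Hom_{R}(M,X)=0$. So $R\ltimes_n M$ is a left $V$-ring iff $R$ is a left $V$-ring and $\Hom_R(M,X)=0$ for every simple $X$.

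For (4), $R\ltimes_{n}M$ is left Kasch iff each simple $(X,0)$ embeds in the regular module $[\coprod M^{\otimes i},\xi]$. Since simple modules are involved, nonzero maps are monic, so I only need a nonzero morphism. Using the adjunction $(\textbf{Z},\textbf{K})$ of Proposition \ref{prop: 2.3}, I get
\[
\Hom_{R\ltimes_{n}M}(\textbf{Z}(X),[Y,\xi])\cong\Hom_{R}(X,\ker\xi),
\]
and $\ker(\xi)=\coprod_{i=0}^{n-1}Ann_{M^{\otimes i}}(M)\oplus M$ by the proof of Theorem \ref{thm: 3.2}. This yields the stated criterion.

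The main obstacle is making sure the identification of the regular module in $\Upsilon$ via $\xi$, and its kernel, is correct, in particular the $M=M^{\otimes n}$ top component. I would rely on Theorem \ref{thm: 3.2}'s computation rather than redo it. The remaining steps are routine checks that $M\otimes X=0$ gives exactness and that semisimplicity follows.
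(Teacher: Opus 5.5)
Parts (1)–(3) of your proposal are the paper's argument and are fine. By Lemma \ref{lem: 3.3} the simples are the $\textbf{Z}(X)=(X,0)$. Theorems \ref{thm: 2.8}, \ref{thm: 2.10} and \ref{thm: 2.9} applied with $f=0$ (resp.\ $g=0$) then give ``$X$ projective/flat and $M\otimes_RX=0$'' and ``$X$ injective and $\Hom_R(M,X)=0$''. Part (4) also follows the paper's route, the $(\textbf{Z},\textbf{K})$-adjunction plus $\ker(\xi)$. However, the step you flagged and chose not to check is exactly where it fails. We have $\xi(m_0,\dots,m_n)=(0,\gamma_1,\dots,\gamma_n)$ with $\gamma_1(x)=xm_0$ and $\gamma_{i+1}(x)=x\otimes m_i$ for $1\le i\le n-1$. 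The last coordinate $m_n\in M^{\otimes_R n}$ never enters, so
\[
\ker(\xi)=\coprod_{i=0}^{n-1}Ann_{M^{\otimes_R i}}(M)\oplus M^{\otimes_R n},
\]
not $\cdots\oplus M$; the two agree only for $n=1$. The formula you import from the proof of Theorem \ref{thm: 3.2} has this slip, and the paper's own proof of (4) uses the same formula. So your argument proves the Kasch criterion with $M^{\otimes_R n}$ in the last slot. It cannot prove the statement as written, which is false for $n\ge 2$.

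Here is a counterexample. Let $k$ be a field and $R$ the $k$-algebra with basis $e_0,e_1,e_2,c$, where the $e_i$ are orthogonal idempotents summing to $1$, $c=e_0ce_2$ and $c^2=0$. Then $J=\mathrm{rad}R=kc$ and the simples are $S_0,S_1,S_2$. Let $M=k\beta\oplus k\delta$ with $\beta=e_1\beta e_0$, $\delta=e_2\delta e_1$, and $J$ acting as zero on both sides. Then:
\begin{itemize}
\item $M\otimes_RM=k(\delta\otimes\beta)\cong S_2$ and $M^{\otimes_R3}=0$;
\item $Ann_R(M)=ke_2\oplus kc$, with socle $kc\cong S_0$;
\item $Ann_M(M)=k\delta\cong S_2$;
\item ${}_RM\cong S_1\oplus S_2$.
\end{itemize}
Take $n=2$ (the same works for every $n\ge 2$, since $M^{\otimes_R3}=0$). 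Each of $S_0,S_1,S_2$ maps nonzero into $Ann_R(M)\oplus Ann_M(M)\oplus M$, so the stated condition holds. On the other hand, $e_1(R\ltimes_2M)=ke_1\oplus k\beta$ and $\delta(ae_1+b\beta)=a\delta+b\,\delta\otimes\beta$. So no nonzero element of $e_1(R\ltimes_2M)$ is annihilated by the radical, $\textbf{Z}(S_1)$ does not embed in $R\ltimes_2M$, and the ring is not left Kasch. With $M^{\otimes_R2}\cong S_2$ in the last slot, the corrected criterion correctly fails at $X=S_1$.
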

\begin{proof}(1) $R\ltimes_{n} M$ is a semisimple Artinian ring if and only if any simple  left $R\ltimes_{n} M$-module is projective  if and only if any left $R\ltimes_{n} M$-module $\textbf{Z}(X)$ with $X$ a simple left $R$-module is projective by Lemma \ref{lem: 3.3} if and only if any simple  left $R$-module $X$ is projective and $M\otimes_{R}X=0$ by Theorem \ref{thm: 2.8} if and only if  $R$ is a semisimple Artinian ring  and $M\otimes_{R}X=0$ for any simple left $R$-module $X$.

(2) $R\ltimes_{n} M$ is a left $SF$ ring  if and only if any left $R\ltimes_{n} M$-module $\textbf{Z}(X)$ with $X$  a simple left $R$-module is flat by Lemma \ref{lem: 3.3}  if and only if any simple  left $R$-module $X$ is flat and $M\otimes_{R}X=0$ by Theorem \ref{thm: 2.10} if and only if  $R$ is a  left $SF$ ring  and $M\otimes_{R}X=0$ for any simple left $R$-module $X$.

(3) $R\ltimes_{n} M$ is a left $V$-ring  if and only if any left $R\ltimes_{n} M$-module $\textbf{Z}(X)$ with $X$ a simple left $R$-module is injective by Lemma \ref{lem: 3.3}  if and only if   any simple  left $R$-module $X$ is injective  and $\Hom_{R}(M,X)=0$ by Theorem \ref{thm: 2.9} if and only if  $R$ is a  left $V$-ring  and $\Hom_{R}(M,X)=0$ for any simple left $R$-module $X$.

(4) $R\ltimes_{n} M$ is a left Kasch ring if and only if $\Hom_{R\ltimes_{n} M}(\textbf{Z}(X), \textbf{T}(R))\neq 0$ for any simple left $R$-module $X$ by \cite[Corollary 8.28]{L} and Lemma \ref{lem: 3.3}. Note that $$\Hom_{R\ltimes_{n}M}(\textbf{Z}(X), \textbf{T}(R))\cong \Hom_{R}(X, \textbf{K}\textbf{T}(R))\cong \Hom_{R}(X, \coprod_{i=0}^{n-1}Ann_{M^{\otimes_{R}i}}(M)\oplus M).$$ So $R$ is a  left Kasch ring if and only if $\Hom_{R}(X, \coprod_{i=0}^{n-1}Ann_{M^{\otimes_{R}i}}(M)\oplus M)\neq 0$  for any simple left $R$-module $X$.
\end{proof}
\bigskip
\section {\bf Some homological formulas of $n$-trivial extensions of rings}
\bigskip
\begin{lem}\label{lem: 4.1} Let $G$ be a right $R$-module and $(X,f)$ a left $R\ltimes_{n}M$-module. Then
\begin{enumerate}\item $\textbf{T}(G)\otimes_{R\ltimes_{n}M}(X,f)\cong G\otimes_{R} X$. \item $\textbf{Z}(G)\otimes_{R\ltimes_{n}M}(X,f)\cong G\otimes_{R}\coker(f)$.
\end{enumerate}
\end{lem}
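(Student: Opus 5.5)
For (1), I will first identify $\textbf{T}(G)$ with the right $R\ltimes_{n}M$-module $G\otimes_{R}(R\ltimes_{n}M)$. This is the right-module analogue of Remark \ref{rem: 2.5}: the right $R\ltimes_{n}M$-module $G\otimes_R(R\ltimes_nM)=\coprod_{i=0}^{n}G\otimes_R M^{\otimes_R i}$ has the shift structure map, so it corresponds to $\textbf{T}(G)$. Associativity of tensor products then gives natural isomorphisms
$$\textbf{T}(G)\otimes_{R\ltimes_{n}M}X\cong (G\otimes_{R}(R\ltimes_{n}M))\otimes_{R\ltimes_{n}M}X\cong G\otimes_{R}((R\ltimes_{n}M)\otimes_{R\ltimes_{n}M}X)\cong G\otimes_{R}X.$$
The $R$-structure on $X$ used in the last term is the one obtained by restriction along $R\rightarrow R\ltimes_{n}M$, which is exactly the underlying module $\textbf{U}(X,f)=X$. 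The only checking needed is that the identification $\textbf{T}(G)\cong G\otimes_R(R\ltimes_nM)$ respects the right action, and this is a routine verification.

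For (2), I will use right exactness. Apply the right-module version of Lemma \ref{lem: 2.6}(1) to $\textbf{Z}(G)=(G,0)$. Since the structure map is zero, the kernel term is $\textbf{T}(G\otimes_R M)$ with the plain shift map. The sequence $\textbf{T}(G\otimes_R M)\rightarrow \textbf{T}(G)\rightarrow \textbf{Z}(G)\rightarrow 0$ is exact. Concretely, the first map sends $(y_1,\dots,y_n)$ (with $y_i\in G\otimes M^{\otimes i}$) to $(0,y_1,\dots,y_n)$, with the last component dropped appropriately. Equivalently, and more cleanly, I can use the exact sequence $(G\otimes_R M)\otimes_R(R\ltimes_nM)\xrightarrow{\ \partial\ } G\otimes_R(R\ltimes_nM)\rightarrow G\rightarrow 0$, where $\partial(g\otimes m\otimes a)=g\otimes (0,m,0,\dots,0)a$.

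Applying $-\otimes_{R\ltimes_nM}X$, which is right exact, and using (1) on the first two terms yields an exact sequence
$$G\otimes_R M\otimes_R X\xrightarrow{\ G\otimes f\ } G\otimes_R X\rightarrow \textbf{Z}(G)\otimes_{R\ltimes_nM}X\rightarrow 0.$$
The key point is to verify that the induced map is indeed $G\otimes f$. Under the isomorphism of (1), $g\otimes m\otimes x$ goes to $g\otimes (0,m,0,\dots)x=g\otimes f(m\otimes x)$, by the definition of the module structure in Proposition \ref{prop: 2.4}.

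Finally, right exactness of $G\otimes_R-$ applied to $M\otimes_RX\xrightarrow{f}X\rightarrow\coker(f)\rightarrow0$ identifies the cokernel of $G\otimes f$ with $G\otimes_R\coker(f)$, which gives (2). I expect the main obstacle to be the bookkeeping in matching the connecting map with $G\otimes f$ under the isomorphism of (1). Naturality of that isomorphism in $G$ alone is not sufficient, because $\partial$ is not of the form $\textbf{T}(\text{map})$. I would therefore handle this by computing directly on elementary tensors, which is straightforward.
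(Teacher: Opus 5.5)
Your proof is correct. Part (1) is exactly the paper's proof: $\textbf{T}(G)\cong G\otimes_R\textbf{T}(R)$ followed by associativity.

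For part (2) the paper again uses associativity. It writes $\textbf{Z}(G)\cong G\otimes_R\textbf{Z}(R)$, so that $\textbf{Z}(G)\otimes_{R\ltimes_nM}(X,f)\cong G\otimes_R(\textbf{Z}(R)\otimes_{R\ltimes_nM}(X,f))$, and then uses $\textbf{Z}(R)\otimes_{R\ltimes_nM}(X,f)\cong\coker(f)$. That last step is the standard isomorphism $(A/I)\otimes_AX\cong X/IX$ for the two-sided ideal $I=\coprod_{i=1}^nM^{\otimes_Ri}$, together with $IX=\im(f)$. The equality $IX=\im(f)$ holds because $(0,m_1,\dots,m_n)x=\sum_i f_i(m_i\otimes x)$, where $f_i=f(M\otimes f)\cdots(M^{\otimes_Ri-1}\otimes f)$ and $\im(f_i)\subseteq\im(f)$.

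You instead tensor a $\textbf{T}$-presentation of $\textbf{Z}(G)$ with $(X,f)$ and identify the induced map with $G\otimes f$ on elementary tensors. The two arguments are the same computation packaged differently:
\begin{itemize}
\item your $\partial$ is $G\otimes_R-$ applied to the presentation $M\otimes_R(R\ltimes_nM)\to R\ltimes_nM\to\textbf{Z}(R)\to0$;
\item your identification of the map with $G\otimes f$ is precisely the fact $IX=\im(f)$.
\end{itemize}
Reducing to $G=R$ first, as the paper does, removes the bookkeeping you flagged as the main obstacle. Your version has the mild advantage of exhibiting the isomorphism explicitly as the one induced by $G\otimes_R\pi$, where $\pi$ is the projection $X\to\coker(f)$.

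One small inaccuracy, which does not affect the argument: the kernel of $\textbf{T}(G)\to\textbf{Z}(G)$ is not $\textbf{T}(G\otimes_RM)$. The kernel is $\coprod_{i=1}^nG\otimes_RM^{\otimes_Ri}$, whereas $\textbf{T}(G\otimes_RM)$ has the extra summand $G\otimes_RM^{\otimes_Rn+1}$ and only surjects onto the kernel. Since you only use the right exact sequence $\textbf{T}(G\otimes_RM)\to\textbf{T}(G)\to\textbf{Z}(G)\to0$, nothing breaks.
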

\begin{proof}(1) $ \textbf{T}(G)\otimes_{R\ltimes_{n}M}(X,f)\cong G\otimes_{R}\textbf{T}(R)\otimes_{R\ltimes_{n}M}(X,f)\cong G\otimes_{R} X$.

(2) $\textbf{Z}(G)\otimes_{R\ltimes_{n}M}(X,f)\cong G\otimes_{R}\textbf{Z}(R)\otimes_{R\ltimes_{n}M}(X,f)\cong G\otimes_{R}\coker(f)$.
\end{proof}
\begin{thm}\label{thm: 4.2}Let $X$ be a left $R$-module, $Y$ a left $R\ltimes_{n}M$-module, $W$ a right $R\ltimes_{n}M$-module and $m\geq 1$.
\begin{enumerate}\item If $\Tor^{R}_{j}(M^{\otimes_{R}i},X)=0$ for any $i,j\geq 1$, then $\Ext^{m}_{R\ltimes_{n}M}(\textbf{T}(X),Y)\cong\Ext^{m}_{R}(X,\textbf{U}(Y))$.\item If $\Ext^{j}_{R}(M^{\otimes_{R}i},X)=0$ for  any $i,j\geq 1$, then $\Ext^{m}_{R\ltimes_{n}M}(Y, \textbf{H}(X))\cong\Ext^{m}_{R}(\textbf{U}(Y),X)$.\item If $\Tor^{R}_{j}(M^{\otimes_{R}i},X)=0$ for any $i,j\geq 1$,  then $\Tor^{R\ltimes_{n}M}_{m}(W, \textbf{T}(X))\cong\Tor^{R}_{m}(\textbf{U}(W),X)$.
\end{enumerate}
\end{thm}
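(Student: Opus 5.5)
The plan is the standard change-of-rings argument: build a resolution of $X$ over $R$, push it through $\textbf{T}$ (resp. $\textbf{H}$), check that the result is still a resolution over $R\ltimes_{n}M$ by modules that are projective (resp. injective, flat) by Corollary \ref{cor: 2.11}, and then compare complexes using the adjunctions of Proposition \ref{prop: 2.3} (resp. Lemma \ref{lem: 4.1}).

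For (1), take a projective resolution $\cdots\rightarrow P_{1}\rightarrow P_{0}\rightarrow X\rightarrow 0$ of $X$ in $R$-Mod. Applying $\textbf{T}$ gives the complex $\cdots\rightarrow\textbf{T}(P_{1})\rightarrow\textbf{T}(P_{0})\rightarrow\textbf{T}(X)\rightarrow 0$ in $\Omega$. Exactness in $\Omega$ is checked on underlying $R$-modules, and the underlying module of $\textbf{T}(P_{k})$ is $\coprod_{i=0}^{n}M^{\otimes_{R}i}\otimes_{R}P_{k}$. So in each summand $i$ the homology of the complex is $\Tor^{R}_{j}(M^{\otimes_{R}i},X)$. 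For $i=0$ this vanishes for $j\geq 1$ trivially, and for $i\geq 1$ it vanishes by hypothesis. Hence $\textbf{T}(P_{\bullet})\rightarrow\textbf{T}(X)$ is exact, and by Corollary \ref{cor: 2.11}(1) it is a projective resolution of $\textbf{T}(X)$ over $R\ltimes_{n}M$. By the adjunction $(\textbf{T},\textbf{U})$ of Proposition \ref{prop: 2.3}, there is an isomorphism of complexes $\Hom_{R\ltimes_{n}M}(\textbf{T}(P_{\bullet}),Y)\cong\Hom_{R}(P_{\bullet},\textbf{U}(Y))$; it is an isomorphism of complexes because the adjunction is natural in the first variable. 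Taking cohomology in degree $m$ gives $\Ext^{m}_{R\ltimes_{n}M}(\textbf{T}(X),Y)\cong\Ext^{m}_{R}(X,\textbf{U}(Y))$.

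Part (2) is the dual argument. Take an injective resolution $0\rightarrow X\rightarrow E^{0}\rightarrow E^{1}\rightarrow\cdots$ over $R$ and apply $\textbf{H}$. The underlying modules are $\coprod_{i}\Hom^{i}_{R}(M,E^{k})$. This is the step I expect to need the most care, because I have to identify $\Hom^{i}_{R}(M,-)$ with $\Hom_{R}(M^{\otimes_{R}i},-)$ by iterated adjunction, so that the cohomology in summand $i$ is $\Ext^{j}_{R}(M^{\otimes_{R}i},X)$. That identification requires the $R$-structure on $\Hom_{R}(M,Y)$ used by $\textbf{H}$ to be the one coming from the right action of $M$, which it is, making $\Hom_{R}(M,\Hom_{R}(M,Y))\cong\Hom_{R}(M\otimes_{R}M,Y)$. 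Under this identification the hypothesis gives exactness. Corollary \ref{cor: 2.11}(2) says each $\textbf{H}(E^{k})$ is injective, and the adjunction $(\textbf{U},\textbf{H})$ gives $\Hom_{R\ltimes_{n}M}(Y,\textbf{H}(E^{\bullet}))\cong\Hom_{R}(\textbf{U}(Y),E^{\bullet})$. Taking cohomology yields the claim.

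For (3), the same resolution $\textbf{T}(P_{\bullet})$ from (1) is a flat (indeed projective) resolution of $\textbf{T}(X)$. Lemma \ref{lem: 4.1}(1), in its left–right symmetric version, gives $W\otimes_{R\ltimes_{n}M}\textbf{T}(P_{k})\cong\textbf{U}(W)\otimes_{R}P_{k}$, naturally in $P_{k}$. Concretely, $\textbf{T}(P)=(R\ltimes_{n}M)\otimes_{R}P$ by Remark \ref{rem: 2.5}, so $W\otimes_{R\ltimes_{n}M}\textbf{T}(P)\cong W\otimes_{R}P$ as a natural isomorphism. Taking homology in degree $m$ gives $\Tor^{R\ltimes_{n}M}_{m}(W,\textbf{T}(X))\cong\Tor^{R}_{m}(\textbf{U}(W),X)$.
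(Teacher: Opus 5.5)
Your proposal is correct and follows the paper's route. You push an $R$-resolution of $X$ through $\textbf{T}$ (resp. $\textbf{H}$), use the vanishing of $\Tor^{R}_{j}(M^{\otimes_{R}i},X)$ (resp. $\Ext^{j}_{R}(M^{\otimes_{R}i},X)$) to keep it exact, get projectivity/injectivity from Corollary \ref{cor: 2.11}, and compare via the adjunctions of Proposition \ref{prop: 2.3} (resp. Lemma \ref{lem: 4.1}(1)). The only differences are cosmetic: you compare the full Hom/tensor complexes at once, whereas the paper truncates the resolution at stage $m$ and dimension-shifts down to $\Ext^{1}$/$\Tor_{1}$, and in (3) the paper uses a flat rather than a projective resolution.
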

\begin{proof}(1) There is an exact sequence in $R$-Mod $0\rightarrow K_{m}\rightarrow P_{m-1}\rightarrow\cdots\rightarrow P_{1}\rightarrow P_{0}\rightarrow X\rightarrow 0$ with each $P_{k}$ projective. Since $\Tor^{R}_{j}(M^{\otimes_{R}i},X)=0$ for any $i,j\geq 1$,  we get the exact sequence $0\rightarrow\textbf{T}(K_{m})\rightarrow\textbf{T}(P_{m-1})\rightarrow\cdots \rightarrow\textbf{T}(P_{1})\rightarrow\textbf{T}(P_{0})\rightarrow\textbf{T}(X)\rightarrow 0$  with each $ \textbf{T}(P_{k})$ projective. Let $K_{m-1}=\ker(P_{m-2}\rightarrow P_{m-3}), K_{0}=X, K_{1}=\ker(P_{0}\rightarrow X)$. Then the exact sequences $0\rightarrow K_{m}\rightarrow P_{m-1}\rightarrow  K_{m-1}\rightarrow 0$ and $0\rightarrow\textbf{T}(K_{m})\rightarrow\textbf{T}(P_{m-1})\rightarrow\textbf{T}(K_{m-1})\rightarrow 0$ induce the following commutative diagram  $$\xymatrix{\Hom_{R\ltimes_{n}M}(\textbf{T}(P_{m-1}),Y)\ar[d]_{\cong}\ar[r]&\Hom_{R\ltimes_{n}M}(\textbf{T}(K_{m}),Y)\ar[d]_{\cong}
\ar[r]&\Ext_{R\ltimes_{n}M}^{1}(\textbf{T}(K_{m-1}),Y)\ar[d]\ar[r]&0\\
\Hom_{R}(P_{m-1},\textbf{U}(Y))\ar[r]&\Hom_{R}(K_{m},\textbf{U}(Y))\ar[r]&\Ext_{R}^{1}(K_{m-1},\textbf{U}(Y))\ar[r]&0.}$$ Hence $\Ext_{R\ltimes_{n}M}^{1}(\textbf{T}(K_{m-1}),Y)\cong\Ext_{R}^{1}(K_{m-1},\textbf{U}(Y))$. Therefore
$$\Ext_{R\ltimes_{n}M}^{m}(\textbf{T}(X),Y)\cong\Ext_{R\ltimes_{n}M}^{1}(\textbf{T}(K_{m-1}),Y)\cong\Ext_{R}^{1}(K_{m-1},\textbf{U}(Y))\cong\Ext^{m}_{R}(X,\textbf{U}(Y)).$$

The proof of (2) is dual to that of (1).

(3) There is an exact sequence in $R$-Mod $0\rightarrow K_{m}\rightarrow F_{m-1}\rightarrow\cdots\rightarrow F_{1}\rightarrow F_{0}\rightarrow X\rightarrow 0$  with each $F_{k}$ flat. Since $\Tor^{R}_{j}(M^{\otimes_{R}i},X)=0$ for any $i,j\geq 1$,  we get the exact sequence $0\rightarrow\textbf{T}(K_{m})\rightarrow\textbf{T}(F_{m-1})\rightarrow\cdots \rightarrow\textbf{T}(F_{1})\rightarrow\textbf{T}(F_{0})\rightarrow\textbf{T}(X)\rightarrow 0$ with each $ \textbf{T}(F_{k})$ flat. Let $K_{m-1}=\ker(F_{m-2}\rightarrow F_{m-3}), K_{0}=X, K_{1}=\ker(F_{0}\rightarrow X)$. By Lemma \ref{lem: 4.1}(1), the exact sequences $0\rightarrow K_{m}\rightarrow F_{m-1}\rightarrow  K_{m-1}\rightarrow 0$ and $0\rightarrow\textbf{T}(K_{m})\rightarrow\textbf{T}(F_{m-1})\rightarrow\textbf{T}(K_{m-1})\rightarrow 0$ induce the following commutative diagram $$\xymatrix{0\ar[r]&\Tor^{R\ltimes_{n}M}_{1}(W, \textbf{T}(K_{m-1}))\ar[d]\ar[r]&W\otimes_{R\ltimes_{n}M} \textbf{T}(K_{m})\ar[d]_{\cong}\ar[r]&W\otimes_{R\ltimes_{n}M} \textbf{T}(F_{m-1})\ar[d]_{\cong}\\0\ar[r]&\Tor^{R}_{1}(\textbf{U}(W),K_{m-1})\ar[r]&\textbf{U}(W)\otimes_{R}K_{m}\ar[r]&\textbf{U}(W)\otimes_{R}F_{m-1}.}$$ Thus $\Tor^{R\ltimes_{n}M}_{1}(W, \textbf{T}(K_{m-1})) \cong\Tor^{R}_{1}(\textbf{U}(W),K_{m-1}) $. Therefore
$$\Tor^{R\ltimes_{n}M}_{m}(W, \textbf{T}(X))\cong\Tor^{R\ltimes_{n}M}_{1}(W, \textbf{T}(K_{m-1})) \cong\Tor^{R}_{1}(\textbf{U}(W),K_{m-1}) \cong\Tor^{R}_{m}(\textbf{U}(W),X).$$

This completes the proof.
\end{proof}
\begin{cor}\label{cor: 4.3}Let $X$ be a left $R$-module.
\begin{enumerate}\item If $\Tor^{R}_{j}(M^{\otimes_{R}i},X)=0$ for any $i,j\geq 1$,  then $pd(_{R\ltimes_{n}M}\textbf{T}(X))=pd(_{R}X)$ and $fd(_{R\ltimes_{n}M}\textbf{T}(X))=fd(_{R}X)$.\item If $\Ext^{j}_{R}(M^{\otimes_{R}i},X)=0$ for  any $i,j\geq 1$, then $id(_{R\ltimes_{n}M}\textbf{H}(X))=id(_{R}X)$.
\end{enumerate}
\end{cor}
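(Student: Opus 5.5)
The plan is to derive both parts directly from Theorem \ref{thm: 4.2} combined with the characterizations in Corollary \ref{cor: 2.11}. Two facts will be used throughout. First, $\textbf{U}\textbf{T}(X)=\coprod_{i=0}^{n}M^{\otimes_{R}i}\otimes_{R}X$ contains $X$ as a direct summand. Second, $\textbf{U}\textbf{H}(X)=\coprod_{i=0}^{n}\Hom^{i}_{R}(M,X)$ contains $X$ as a direct summand.

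For (1), fix $m\geq 1$. To show $pd(_{R\ltimes_{n}M}\textbf{T}(X))\leq pd(_{R}X)$, suppose $pd(_{R}X)=k<\infty$. Theorem \ref{thm: 4.2}(1) gives $\Ext^{k+1}_{R\ltimes_{n}M}(\textbf{T}(X),Y)\cong\Ext^{k+1}_{R}(X,\textbf{U}(Y))=0$ for every left $R\ltimes_{n}M$-module $Y$, so $pd(\textbf{T}(X))\leq k$.

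For the reverse inequality, suppose $pd(\textbf{T}(X))=k<\infty$. Let $N$ be any left $R$-module and take $Y=\textbf{H}(N)$. Since $(\textbf{U},\textbf{H})$ is an adjoint pair, $\textbf{U}(Y)$ contains $N$ as a summand. Then $\Ext^{k+1}_{R}(X,N)$ is a direct summand of $\Ext^{k+1}_{R}(X,\textbf{U}\textbf{H}(N))\cong\Ext^{k+1}_{R\ltimes_{n}M}(\textbf{T}(X),\textbf{H}(N))=0$. Hence $pd(_{R}X)\leq k$.

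The case $k=0$ deserves a separate remark, since Theorem \ref{thm: 4.2} is stated only for $m\geq1$. The argument above with $m=k+1\geq 1$ already covers it, and Corollary \ref{cor: 2.11}(1) gives the same conclusion directly. Infinite dimensions are handled by the two inequalities together.

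The flat dimension statement in (1) is proved the same way using Theorem \ref{thm: 4.2}(3). If $fd(_{R}X)=k$, then $\Tor^{R\ltimes_{n}M}_{k+1}(W,\textbf{T}(X))\cong\Tor^{R}_{k+1}(\textbf{U}(W),X)=0$ for all right modules $W$, so $fd(\textbf{T}(X))\leq k$.

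For the converse I would take $W=\textbf{T}(G)$ for an arbitrary right $R$-module $G$, using the right-module analogue of $\textbf{T}$. Then $\textbf{U}(W)$ contains $G$ as a direct summand, so $\Tor^{R}_{k+1}(G,X)$ is a summand of a vanishing group. Alternatively one could dualize via character modules, but the summand argument is simpler.

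For (2), the argument is dual, using Theorem \ref{thm: 4.2}(2). If $id(_{R}X)=k$, then $\Ext^{k+1}_{R\ltimes_{n}M}(Y,\textbf{H}(X))\cong\Ext^{k+1}_{R}(\textbf{U}(Y),X)=0$, so $id(\textbf{H}(X))\leq k$. Conversely, take $Y=\textbf{T}(N)$ for an arbitrary left $R$-module $N$. Then $\textbf{U}(Y)$ has $N$ as a summand, and $\Ext^{k+1}_{R}(N,X)$ is a summand of $\Ext^{k+1}_{R\ltimes_{n}M}(\textbf{T}(N),\textbf{H}(X))=0$.

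There is no real obstacle here. The only point needing care is choosing test modules $Y$ and $W$ whose underlying $R$-module contains the given $R$-module as a direct summand, so that Ext and Tor over $R$ can be recovered. The functors $\textbf{T}$ and $\textbf{H}$ provide exactly such modules.
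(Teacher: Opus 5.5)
Your proof is correct and matches the paper's intended argument: the paper states Corollary \ref{cor: 4.3} without proof as an immediate consequence of Theorem \ref{thm: 4.2}, and you derive the two inequalities (including the infinite case) from it exactly that way. For the reverse inequalities, the simpler test modules are $\textbf{Z}(N)$ and $\textbf{Z}(G)$, because $\textbf{U}\textbf{Z}=id_{R{\rm-Mod}}$ by Proposition \ref{prop: 2.3}, so no direct-summand step is needed; also, $N$ being a summand of $\textbf{U}\textbf{H}(N)$ follows from the explicit form of $\textbf{H}$, not from the adjunction $(\textbf{U},\textbf{H})$.
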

\begin{thm}\label{thm: 4.4}The following conditions are equivalent for a left $R\ltimes_{n}M$-module $(X,f)$:
\begin{enumerate}\item The sequence $M^{\otimes_{R}n+1}\otimes_{R} X\stackrel{(M\otimes f) \cdots(M^{\otimes_{R}n}\otimes f)}\longrightarrow  M\otimes_{R} X\stackrel{f}\longrightarrow X$ is exact. \item  $\Tor^{R\ltimes_{n}M}_{1}(\textbf{Z}(R), (X,f))=0$. \item  $\Tor^{R\ltimes_{n}M}_{1}(\textbf{Z}(G), (X,f))\cong \Tor^{R}_{1}(G, \coker(f))$ for any right $R$-module $G$. \item $\Ext^{1}_{R\ltimes_{n}M}((X,f),\textbf{Z}(Q))\cong \Ext^{1}_{R}(\coker(f),Q)$ for any left $R$-module $Q$.
\end{enumerate}
\end{thm}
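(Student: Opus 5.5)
The plan is to compute everything from the short exact sequence of Lemma \ref{lem: 2.6}(1),
$$0\rightarrow(\textstyle\coprod_{i=1}^{n}(M^{\otimes_{R}i}\otimes_{R}X),h)\stackrel{\lambda}\rightarrow\textbf{T}(X)\stackrel{\rho}\rightarrow(X,f)\rightarrow0,$$
together with a projective presentation. I will take an epimorphism $\theta:\textbf{T}(P)\rightarrow(X,f)$ with $P$ a free left $R$-module, for instance $P=X^{(X)}$ mapped via $\rho$, or simply $\textbf{T}(R)^{(I)}$. Its kernel $(K,k)$ gives $0\rightarrow(K,k)\rightarrow\textbf{T}(P)\rightarrow(X,f)\rightarrow0$. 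Applying $\textbf{Z}(G)\otimes_{R\ltimes_nM}-$ and using Lemma \ref{lem: 4.1}(2), which says $\textbf{Z}(G)\otimes(Y,g)\cong G\otimes_R\coker(g)$, yields
$$0\rightarrow\Tor_1(\textbf{Z}(G),(X,f))\rightarrow G\otimes_R\coker(k)\rightarrow G\otimes_RP\rightarrow G\otimes_R\coker(f)\rightarrow0.$$
On the other hand, the snake lemma applied to the diagram of $M\otimes-$ of the sequence over the sequence itself (exactly as in the proof of Theorem \ref{thm: 2.12}(2)) gives the exact sequence
$$\ker(f)\rightarrow\coker(k)\rightarrow P\rightarrow\coker(f)\rightarrow0,$$
using $\ker(\mu)=0$ on $\textbf{T}(P)$ after the $M\otimes$ shift. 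Let $N$ be the image of $\ker(f)$ in $\coker(k)$. The key claim is that $N=0$ if and only if condition (1) holds. I would verify this by chasing with the explicit form of $\ker(\rho)=\im(\lambda)$ from Lemma \ref{lem: 2.6}(1). The connecting map sends $z\in\ker(f)$ to the class of a lift in $K$, and it vanishes exactly when $z$ is in $\im((M\otimes f)\cdots(M^{\otimes_R n}\otimes f))$, modulo the image of $M\otimes K$. This identification is the main obstacle. I would try first with $P$ replaced by $X$ and $\theta=\rho$, where $K\cong\coprod_{i\ge1}M^{\otimes i}\otimes X$ and $h$ is explicit. Then $\coker(h)$ can be computed directly: it is $M\otimes X$ modulo the image of $M^{\otimes n+1}\otimes X$ under $(M\otimes f)\cdots$, by telescoping the components of $h$. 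The map $\coker(h)\rightarrow X$ is induced by $f$. Hence the sequence reads $0\rightarrow \ker f/\im(M\otimes f_n)\rightarrow\coker(h)\rightarrow X\rightarrow\coker f\rightarrow 0$, with $f_n$ as in Lemma \ref{lem: 2.7}. This is justified because $\Tor$ may be computed from any flat presentation, and $\textbf{T}(X)$ need not be flat, so I must keep the projective version. Alternatively, I can take a free $P\rightarrow X$, compose to get $\textbf{T}(P)\rightarrow\textbf{T}(X)\rightarrow(X,f)$, and compare kernels.

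Granting the sequence $0\rightarrow D\rightarrow\coker(k)\rightarrow P\rightarrow\coker f\rightarrow0$, where $D$ is the homology of the sequence in (1) (up to the identification above), I can finish as follows.

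(1)$\Rightarrow$(3): If $D=0$, then $0\rightarrow\coker(k)\rightarrow P\rightarrow\coker(f)\rightarrow0$ is a projective presentation over $R$. Tensoring with $G$ gives $\Tor^R_1(G,\coker f)$ as the kernel of $G\otimes\coker(k)\rightarrow G\otimes P$, which matches the $\Tor_1$ over $R\ltimes_nM$ above.

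(3)$\Rightarrow$(2): Take $G=R$.

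(2)$\Rightarrow$(1): With $G=R$, the vanishing of $\Tor_1$ forces $\coker(k)\rightarrow P$ to be monic. Hence $D=0$.

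(1)$\Leftrightarrow$(4): This is dual. Apply $\Hom(-,\textbf{Z}(Q))$, and use $\Hom_{R\ltimes_nM}((Y,g),\textbf{Z}(Q))\cong\Hom_R(\coker g,Q)$, which follows from the adjunction $(\textbf{C},\textbf{Z})$ of Proposition \ref{prop: 2.3}. This gives $\Ext^1((X,f),\textbf{Z}(Q))=\coker(\Hom(P,Q)\rightarrow\Hom(\coker k,Q))$. If $D=0$, this equals $\Ext^1_R(\coker f,Q)$. Conversely, take $Q$ to be an injective cogenerator $E$. Then $\Ext^1_R(\coker f,E)=0$, so $\Hom(\coker k,E)\rightarrow\Hom(D,E)$ must vanish, and since it is surjective this forces $\Hom(D,E)=0$, whence $D=0$.
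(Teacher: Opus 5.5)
There is a genuine gap at the step the whole theorem turns on. The rest is sound once you define $D$ simply as $\ker(\coker(k)\to P)$, which is $\Tor_1^{R\ltimes_nM}(\textbf{Z}(R),(X,f))$ by Lemma~\ref{lem: 4.1}(2). With that definition, (2)$\Rightarrow$(3), (2)$\Rightarrow$(4) and (3)$\Rightarrow$(2) are as in the paper. Your injective-cogenerator argument, which amounts to $\Ext^1_{R\ltimes_nM}((X,f),\textbf{Z}(E))\cong\Hom_R(D,E)$, is a valid substitute for the paper's character-module duality in (4)$\Rightarrow$(2).

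The gap is identifying this $D$ with the homology $\ker(f)/\im(M\otimes f_n)$ of the sequence in (1), where $f_n=f(M\otimes f)\cdots(M^{\otimes_R n-1}\otimes f)$. That identification is the entire content of (1)$\Leftrightarrow$(2), and you grant it rather than prove it. The one concrete justification you give is false. On $M\otimes_R\textbf{T}(P)$ the map $\mu$ kills the top summand, so $\ker(\mu)=M^{\otimes_R n+1}\otimes_R P$, which is nonzero in general. If $\ker(\mu)$ were $0$, the snake lemma would give $\Tor_1\cong\ker(f)$. That already fails for $(X,f)=\textbf{T}(R)$ when $M^{\otimes_R n+1}\neq 0$: there $\Tor_1=0$ but $\ker(f)\neq 0$. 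Your first fallback ($\theta=\rho$ on $\textbf{T}(X)$) does not compute $\Tor_1$, since $\textbf{T}(X)$ is not flat; you would separately need $\Tor_1^{R\ltimes_nM}(\textbf{Z}(R),\textbf{T}(X))=0$. The second fallback is not carried out.

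You can repair the step inside your own framework:
\begin{enumerate}
\item Take $\theta=\rho\circ\textbf{T}(\phi)$ for an epimorphism $\phi:P\to X$.
\item In the snake sequence $\ker(\mu)\to\ker(f)\to\coker(k)\to P\to\coker(f)\to0$, the first map is the restriction of $M\otimes\theta$ to the top summand $M\otimes_R M^{\otimes_R n}\otimes_R P$. That restriction is $(M\otimes f_n)(M^{\otimes_R n+1}\otimes\phi)$.
\item Its image is $\im(M\otimes f_n)$, because $M^{\otimes_R n+1}\otimes\phi$ is onto.
\item Hence the image of $\ker(f)$ in $\coker(k)$ is exactly $\ker(f)/\im(M\otimes f_n)$, which is what you need.
\end{enumerate}

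The paper avoids this chase by resolving the other variable instead:
\begin{enumerate}
\item It tensors the sequence of right modules $0\to(\coprod_{i=1}^nM^{\otimes_R i},h)\to\textbf{T}(R)\to\textbf{Z}(R)\to0$ with $(X,f)$. The middle term is the regular module, hence projective.
\item It identifies $(\coprod_{i=1}^nM^{\otimes_R i},h)\cong M\otimes_R(\textbf{T}(R)/(0,\ldots,0,M^{\otimes_R n}))$.
\item So the relevant term becomes $M\otimes_R\coker(f_n)$, mapping to $X$ via $\alpha_{n+1}$.
\item It then reads off (1)$\Leftrightarrow$(2) from Lemma~\ref{lem: 2.7}(1).
\end{enumerate}
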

\begin{proof}(1) $\Leftrightarrow$ (2)  By Lemma \ref{lem: 2.6}(1), there is an exact sequence in Mod-$R\ltimes_{n}M$ $$0\rightarrow  (\coprod_{i=1}^{n}M^{\otimes_{R}i},h)\rightarrow\textbf{T}(R)\rightarrow\textbf{Z}(R)\rightarrow 0,$$ which induces the exact sequence $$0\rightarrow\Tor^{R\ltimes_{n}M}_{1}(\textbf{Z}(R), (X,f))\rightarrow(\coprod_{i=1}^{n}M^{\otimes_{R}i},h)\otimes_{R\ltimes_{n}M} (X,f)\rightarrow  X\rightarrow\coker(f)\rightarrow 0.$$ Since $(0,0,\cdots,0,M^{\otimes_{R}n})$ is an ideal of $R\ltimes_{n}M$, we get the exact sequence in Mod-$R\ltimes_{n}M$ $$M\otimes_{R}(0,0,\cdots,0,M^{\otimes_{R}n})\rightarrow M\otimes_{R}\textbf{T}(R)\rightarrow M\otimes_{R}(\textbf{T}(R)/(0,0,\cdots,0,M^{\otimes_{R}n}))\rightarrow 0.$$ On the other hand, we have the exact sequence  in Mod-$R\ltimes_{n}M$ $$\textbf{Z}(M^{\otimes_{R}n+1})\rightarrow\textbf{T}(M)\rightarrow(\coprod_{i=1}^{n}M^{\otimes_{R}i},h)\rightarrow 0.$$ So $(\coprod_{i=1}^{n}M^{\otimes_{R}i},h)\cong M\otimes_{R}(\textbf{T}(R)/(0,0,\cdots,M^{\otimes_{R}n}))$. 

Let $f_{n}=f(M\otimes f) \cdots(M^{\otimes_{R}n-1}\otimes f): M^{\otimes_{R}n}\otimes_{R} X\rightarrow X$. Then $$(\coprod_{i=1}^{n}M^{\otimes_{R}i},h)\otimes_{R\ltimes_{n}M} (X,f)\cong M\otimes_{R}(\textbf{T}(R)/(0,0,\cdots,M^{\otimes_{R}n}))\otimes_{R\ltimes_{n}M} (X,f)\cong M\otimes_{R}\coker(f_{n}).$$ Therefore (1) $\Leftrightarrow$ (2) holds by Lemma \ref{lem: 2.7}(1).

(2) $\Rightarrow$ (3) and (2) $\Rightarrow$ (4) There is an exact sequence in $R\ltimes_{n}M$-Mod $$0\rightarrow (K,g)\rightarrow \textbf{T}(P)\rightarrow (X,f)\rightarrow 0$$ with $P$ a projective left $R$-module.
By Lemma \ref{lem: 4.1}(2), we get the exact sequence $$0=\Tor^{R\ltimes_{n}M}_{1}(\textbf{Z}(R), (X,f))\rightarrow\coker(g)\rightarrow P\rightarrow\coker(f)\rightarrow 0.$$
For any right $R$-module $G$, we have the following commutative diagram with exact rows $$\xymatrix{0\ar[r]&\Tor^{R\ltimes_{n}M}_{1}(\textbf{Z}(G), (X,f))\ar[d]\ar[r]&\textbf{Z}(G)\otimes_{R\ltimes_{n}M}(K,g)\ar[d]_{\cong}\ar[r]&\textbf{Z}(G)\otimes_{R\ltimes_{n}M}\textbf{T}(P)\ar[d]_{\cong}\\0\ar[r]&\Tor^{R}_{1}(G, \coker(f))\ar[r]&G\otimes_{R}\coker(g)\ar[r]&G\otimes_{R} P.}$$ Therefore $\Tor^{R\ltimes_{n}M}_{1}(\textbf{Z}(G), (X,f))\cong \Tor^{R}_{1}(G, \coker(f))$.

Let $Q$ be any left $R$-module. Since $(\textbf{C}, \textbf{Z})$ is an adjoint pair, the following commutative diagram with exact rows {\small $$\xymatrix{\Hom_{R\ltimes_{n}M}(\textbf{T}(P),\textbf{Z}(Q))\ar[d]_{\cong}\ar[r]&\Hom_{R\ltimes_{n}M}((K,g),\textbf{Z}(Q))\ar[d]_{\cong}
\ar[r]&\Ext^{1}_{R\ltimes_{n}M}((X,f),\textbf{Z}(Q))\ar[d]\ar[r]&0\\\Hom_{R}( P,Q)\ar[r]&\Hom_{R}(\coker(g),Q)\ar[r]&\Ext^{1}_{R}(\coker(f),Q)\ar[r]&0}$$} implies  that $\Ext^{1}_{R\ltimes_{n}M}((X,f),\textbf{Z}(Q))\cong \Ext^{1}_{R}(\coker(f),Q)$.

(3) $\Rightarrow (2)$  is clear by letting $G=R$.

(4) $\Rightarrow (2)$ By \cite[p.360]{R}, we have $$\Tor_{1}^{R\ltimes_{n}M}(\textbf{Z}(R),(X,f))^{+}\cong\Ext^{1}_{{R\ltimes_{n}M}}((X,f),\textbf{Z}(R)^{+})$$$$\cong\Ext^{1}_{{R\ltimes_{n}M}}((X,f),\textbf{Z}(R^{+}))\cong \Ext^{1}_{R}(\coker(f),R^{+})=0.$$ So $\Tor_{1}^{R\ltimes_{n}M}(\textbf{Z}(R),(X,f))=0.$
\end{proof}
\begin{thm}\label{thm: 4.5}The following conditions are equivalent for a left $R\ltimes_{n}M$-module $(X,f)$:
\begin{enumerate}\item  $\Tor_{i}^{R\ltimes_{n}M}(\textbf{Z}(R),(X,f)) = 0 $ for any $i\geq 1$.\item  $\Tor_{i}^{R\ltimes_{n}M}(\textbf{Z}(G),(X,f))\cong \Tor^{R}_{i}(G,\coker(f))$ for any right $R$-module $G$ and $i\geq 1$.\item  $\Ext^{i}_{R\ltimes_{n}M}((X,f),\textbf{Z}(Q))\cong \Ext^{i}_{R}(\coker(f), Q)$ for any left $R$-module $Q$ and $i\geq 1$.
\end{enumerate}
\end{thm}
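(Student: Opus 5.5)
The plan is to prove (1) $\Rightarrow$ (2) and (1) $\Rightarrow$ (3) by dimension shifting along a resolution built from modules of the form $\textbf{T}(P)$. The reverse implications then follow from the case $i=1$ as in Theorem \ref{thm: 4.4}.

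\emph{Building the resolution.} Assume (1). Take an exact sequence $0\rightarrow (K_1,g_1)\rightarrow \textbf{T}(P_0)\rightarrow (X,f)\rightarrow 0$ with $P_0$ a projective left $R$-module. Then iterate: $0\rightarrow (K_{k+1},g_{k+1})\rightarrow \textbf{T}(P_k)\rightarrow (K_k,g_k)\rightarrow 0$, each $P_k$ projective. Every $\textbf{T}(P_k)$ is projective by Corollary \ref{cor: 2.11}, so the long exact Tor sequence gives
$$\Tor_{i}^{R\ltimes_{n}M}(\textbf{Z}(R),(K_k,g_k))\cong\Tor_{i+k}^{R\ltimes_{n}M}(\textbf{Z}(R),(X,f))=0 \quad (i\geq 1).$$
Hence each syzygy $(K_k,g_k)$ also satisfies condition (2) of Theorem \ref{thm: 4.4}.

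\emph{Exactness after applying $\textbf{C}$.} By Lemma \ref{lem: 4.1}(2) with $G=R$, applying $\textbf{Z}(R)\otimes_{R\ltimes_nM}-$ to the short exact sequences gives exact sequences $0\rightarrow \coker(g_{k+1})\rightarrow P_k\rightarrow \coker(g_k)\rightarrow 0$; this uses that $\Tor_1(\textbf{Z}(R),(K_k,g_k))=0$. Splicing them produces a projective resolution $\cdots\rightarrow P_1\rightarrow P_0\rightarrow \coker(f)\rightarrow 0$ of $\coker(f)$ over $R$, whose $k$-th syzygy is $\coker(g_k)$.

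\emph{Comparing Tor.} For any right $R$-module $G$, Lemma \ref{lem: 4.1}(2) identifies the complex $\textbf{Z}(G)\otimes_{R\ltimes_nM}\textbf{T}(P_\bullet)$ with $G\otimes_R P_\bullet$, naturally in the modules. Since $\textbf{T}(P_\bullet)$ is a projective resolution of $(X,f)$ and $P_\bullet$ is a projective resolution of $\coker(f)$, taking homology gives $\Tor_i^{R\ltimes_nM}(\textbf{Z}(G),(X,f))\cong\Tor_i^R(G,\coker(f))$ for all $i\geq 1$. Alternatively, one can apply Theorem \ref{thm: 4.4}(3) to each $(K_k,g_k)$ and dimension-shift, exactly as in the proof of Theorem \ref{thm: 4.2}.

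\emph{Comparing Ext.} The adjunction $(\textbf{C},\textbf{Z})$ gives $\Hom_{R\ltimes_nM}(\textbf{T}(P_k),\textbf{Z}(Q))\cong\Hom_R(\textbf{C}\textbf{T}(P_k),Q)=\Hom_R(P_k,Q)$, naturally. So the two Hom complexes agree, and their cohomology gives $\Ext^{i}_{R\ltimes_nM}((X,f),\textbf{Z}(Q))\cong \Ext^i_R(\coker(f),Q)$ for all $i\geq 1$.

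\emph{Converses.} (2) $\Rightarrow$ (1) is immediate on taking $G=R$, since $\Tor^R_i(R,-)=0$. For (3) $\Rightarrow$ (1), use the duality of \cite[p.360]{R} as in Theorem \ref{thm: 4.4}:
$$\Tor_i^{R\ltimes_nM}(\textbf{Z}(R),(X,f))^+\cong\Ext^i_{R\ltimes_nM}((X,f),\textbf{Z}(R^+))\cong\Ext^i_R(\coker(f),R^+)=0,$$
because $R^+$ is injective. Hence the Tor groups vanish.

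The main obstacle is the exactness of the $\textbf{C}$-image sequences. This requires that the vanishing in (1) passes to every syzygy, and that the isomorphisms of Lemma \ref{lem: 4.1}(2) are natural, so that the identifications commute with the differentials. I would verify naturality directly from $\textbf{Z}(G)\otimes_{R\ltimes_nM}-\cong G\otimes_R\textbf{C}(-)$, which is a natural isomorphism of functors. With that in hand, the degree-by-degree comparison is routine.
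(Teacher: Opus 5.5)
Your proposal is correct and follows essentially the paper's route: resolve $(X,f)$ by modules $\textbf{T}(P_k)$, use the vanishing in (1) together with Lemma \ref{lem: 4.1}(2) to see that applying $\textbf{C}$ gives a projective resolution of $\coker(f)$, and obtain the converses from $G=R$ and from the duality with $R^{+}$. The only difference is packaging. The paper dimension-shifts and applies Theorem \ref{thm: 4.4} to the syzygy $(K_{i-1},h_{i-1})$, which is your stated alternative. Your main line instead compares the whole complexes through the natural isomorphisms $\textbf{Z}(G)\otimes_{R\ltimes_nM}-\cong G\otimes_R\textbf{C}(-)$ and $\Hom_{R\ltimes_nM}(-,\textbf{Z}(Q))\cong\Hom_R(\textbf{C}(-),Q)$.
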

\begin{proof}(1) $\Rightarrow$ (2) and (1) $\Rightarrow$ (3) Applying Theorem \ref{thm: 4.4},  it is enough to prove that $\Tor_{i}^{R\ltimes_{n}M}(\textbf{Z}(G),(X,f))\cong \Tor^{R}_{i}(G,\coker(f))$  for any right $R$-module $G$ and $i\geq 2$, and $\Ext^{i}_{R\ltimes_{n}M}((X,f),\textbf{Z}(Q))\cong \Ext^{i}_{R}(\coker(f), Q)$ for any left $R$-module $Q$ and $i\geq 2$.

There is an exact sequence in $R\ltimes_{n}M$-Mod  $$0\rightarrow (K_{i-1},h_{i-1})\rightarrow \textbf{T}(P_{i-2})\rightarrow\cdots\rightarrow\textbf{T}(P_{1})\rightarrow \textbf{T}(P_{0})\rightarrow (X,f)\rightarrow 0$$ with each $P_{k}$ a projective left $R$-module. By (1) and Lemma \ref{lem: 4.1}(2), we obtain the exact sequence $$0\rightarrow\coker(h_{i-1})\rightarrow P_{i-2}\rightarrow\cdots\rightarrow P_{1}\rightarrow P_{0}\rightarrow\coker(f)\rightarrow 0.$$
 Note that $\Tor_{1}^{R\ltimes_{n}M}(\textbf{Z}(R),(K_{i-1},h_{i-1}))\cong\Tor_{i}^{R\ltimes_{n}M}(\textbf{Z}(R),(X,f))=0$.  By Theorem \ref{thm: 4.4}, $$\Tor_{i}^{R\ltimes_{n}M}(\textbf{Z}(G),(X,f))\cong\Tor_{1}^{R\ltimes_{n}M}(\textbf{Z}(G),(K_{i-1},h_{i-1}))$$$$\cong \Tor^{R}_{1}(G,\coker(h_{i-1}))\cong \Tor^{R}_{i}(G,\coker(f)),$$  $$\Ext^{i}_{R\ltimes_{n}M}((X,f),\textbf{Z}(Q))\cong\Ext^{1}_{R\ltimes_{n}M}((K_{i-1},h_{i-1}),\textbf{Z}(Q))$$$$\cong \Ext^{1}_{R}(\coker(h_{i-1}),Q)\cong \Ext^{i}_{R}(\coker(f),Q).$$

(2) $\Rightarrow (1)$  is clear by letting $G=R$.

(3) $\Rightarrow (1)$ Since  $\Ext^{i}_{R\ltimes_{n}M}((X,f),\textbf{Z}(R^{+}))\cong \Ext^{i}_{R}(\coker(f), R^{+})=0$ for any $i\geq 1$, $$\Tor_{i}^{R\ltimes_{n}M}(\textbf{Z}(R),(X,f))^{+}\cong \Ext^{i}_{R\ltimes_{n}M}((X,f),\textbf{Z}(R)^{+})\cong\Ext^{i}_{R\ltimes_{n}M}((X,f),\textbf{Z}(R^{+}))= 0.$$ Thus $\Tor_{i}^{R\ltimes_{n}M}(\textbf{Z}(R),(X,f)) = 0$.
\end{proof}
As a consequence of  Theorem \ref{thm: 4.5} and Lemma \ref{lem: 2.6}(3), we have
\begin{cor}\label{cor: 4.6}If $(X,f)$ is a left $R\ltimes_{n}M$-module such that $\Tor_{i}^{R\ltimes_{n}M}(\textbf{Z}(R),(X,f)) = 0 $ for any $i\geq 1$, then $pd(_{R\ltimes_{n}M}(X,f))=pd(_{R}\coker(f))$ and $fd(_{R\ltimes_{n}M}(X,f))=fd(_{R}\coker(f))$.
\end{cor}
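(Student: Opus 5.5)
Under the hypothesis $\Tor_{i}^{R\ltimes_{n}M}(\textbf{Z}(R),(X,f))=0$ for all $i\geq 1$, I would prove the two equalities by two inequalities each. Theorem \ref{thm: 4.5} supplies $\Ext^{i}_{R\ltimes_{n}M}((X,f),\textbf{Z}(Q))\cong\Ext^{i}_{R}(\coker(f),Q)$ and $\Tor_{i}^{R\ltimes_{n}M}(\textbf{Z}(G),(X,f))\cong\Tor^{R}_{i}(G,\coker(f))$ for all $i\geq 1$. Lemma \ref{lem: 2.6}(3) lets us reduce an arbitrary $R\ltimes_{n}M$-module to modules of the form $\textbf{Z}(-)$, which is exactly the class these isomorphisms control.

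\emph{Lower bound.} Suppose $pd(_{R\ltimes_{n}M}(X,f))=k<\infty$. Then for every $Q$ we have $\Ext^{k+1}_{R}(\coker(f),Q)\cong\Ext^{k+1}_{R\ltimes_{n}M}((X,f),\textbf{Z}(Q))=0$, so $pd(_{R}\coker(f))\leq k$. Likewise $fd(_{R\ltimes_{n}M}(X,f))=k$ gives $\Tor^{R}_{k+1}(G,\coker(f))\cong\Tor_{k+1}^{R\ltimes_{n}M}(\textbf{Z}(G),(X,f))=0$ for all $G$, hence $fd(_{R}\coker(f))\leq k$.

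\emph{Upper bound.} Suppose $pd(_{R}\coker(f))=k$. We need $\Ext^{k+1}_{R\ltimes_{n}M}((X,f),N)=0$ for every left $R\ltimes_{n}M$-module $N$. Lemma \ref{lem: 2.6}(3) applied to $N=(Y,g)$ gives a finite chain of short exact sequences
\[0\rightarrow(\im(u_{j}),u_{j+1})\rightarrow(\im(u_{j-1}),u_{j})\rightarrow\textbf{Z}(\coker(u_{j}))\rightarrow 0,\]
ending with $\textbf{Z}(\im(u_{n-1}))$. So $N$ is an iterated extension of at most $n+1$ modules of the form $\textbf{Z}(Q)$. Each of these satisfies $\Ext^{k+1}((X,f),\textbf{Z}(Q))\cong\Ext^{k+1}_{R}(\coker(f),Q)=0$. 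Working upward from the bottom of the chain with the long exact $\Ext$ sequence, the vanishing passes to extensions, so $\Ext^{k+1}((X,f),N)=0$.

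The flat case is the same. Apply Lemma \ref{lem: 2.6}(3) to a right $R\ltimes_{n}M$-module $W$ (the right-module analogue), and use $\Tor_{k+1}^{R\ltimes_{n}M}(\textbf{Z}(G),(X,f))\cong\Tor^{R}_{k+1}(G,\coker(f))=0$ together with the long exact $\Tor$ sequences to get $\Tor_{k+1}^{R\ltimes_{n}M}(W,(X,f))=0$.

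\emph{Infinite case.} If one side is infinite, the contrapositive of the finite argument shows the other side is infinite as well.

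\emph{Main obstacle.} The only delicate point is using Lemma \ref{lem: 2.6}(3) for right modules, for the flat dimension. This rests on the symmetric version of the category $\Omega$, which the paper asserts exists ("analogous functors for right modules").
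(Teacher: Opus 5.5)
Your proof is correct and is exactly the argument the paper intends: the paper gives the corollary only as a consequence of Theorem \ref{thm: 4.5} and Lemma \ref{lem: 2.6}(3). You supply the missing details, namely the lower bounds from the isomorphisms of Theorem \ref{thm: 4.5}, and the upper bounds from the finite filtration by modules $\textbf{Z}(-)$ combined with the long exact $\Ext$/$\Tor$ sequences. The right-module point you flag is harmless: the filtration is just $W\supseteq WJ\supseteq\cdots\supseteq WJ^{n+1}=0$ with $J=\coprod_{i=1}^{n}M^{\otimes_{R}i}$, and each factor is annihilated by $J$, hence is of the form $\textbf{Z}(G)$.
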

\begin{thm}\label{thm: 4.7}The following conditions are equivalent for a left $R\ltimes_{n}M$-module  $[Y,g]$:
\begin{enumerate}\item The sequence $Y\stackrel{g}\longrightarrow \Hom_{R}(M,Y)\stackrel{{\rm Hom}_{R}^{n}(M,g)\cdots{\rm Hom}_{R}(M,g)}\longrightarrow \Hom^{n+1}_{R}(M,Y)$ is exact.\item $\Ext^{1}_{R\ltimes_{n}M}(\textbf{Z}(R),[Y,g])=0$. \item $\Ext^{1}_{R\ltimes_{n}M}(\textbf{Z}(Q),[Y,g])\cong \Ext^{1}_{R}(Q,\ker(g))$ for any  left $R$-module $Q$.
\end{enumerate}
\end{thm}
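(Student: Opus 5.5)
This is the dual of Theorem \ref{thm: 4.4}, and I will prove it by dualizing that argument in the category $\Upsilon$. I use the exact sequence of Lemma \ref{lem: 2.6}(2) and Lemma \ref{lem: 2.7}(2) in place of their $\Omega$-counterparts.

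\emph{(1) $\Leftrightarrow$ (2).} Start from an injective copresentation of $[Y,g]$. By Theorem \ref{thm: 2.9}, choose an injective left $R$-module $E$ and a monomorphism $[Y,g]\rightarrow \textbf{H}(E)$, for instance by embedding $\ker(g_{n+1})=Y$ suitably. More simply, take $E$ injective with $Y\hookrightarrow E$, and use the unit $[Y,g]\rightarrow\textbf{H}(Y)\rightarrow\textbf{H}(E)$. This composite is monic because $\textbf{U}$ of it has the first component $Y\hookrightarrow E$. This gives a short exact sequence $0\rightarrow[Y,g]\rightarrow\textbf{H}(E)\rightarrow[L,d]\rightarrow 0$. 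Applying $\Hom_{R\ltimes_{n}M}(\textbf{Z}(R),-)$ and the adjunction $(\textbf{Z},\textbf{K})$ yields the exact sequence
$$0\rightarrow\ker(g)\rightarrow E\rightarrow \ker(d)\rightarrow \Ext^{1}_{R\ltimes_{n}M}(\textbf{Z}(R),[Y,g])\rightarrow 0,$$
using $\Ext^1(\textbf{Z}(R),\textbf{H}(E))=0$. A cleaner route, which mirrors the proof of (1)$\Leftrightarrow$(2) in Theorem \ref{thm: 4.4}, is to apply $\Hom(-,[Y,g])$ to $0\rightarrow(\coprod_{i=1}^{n}M^{\otimes_{R}i},h)\rightarrow\textbf{T}(R)\rightarrow\textbf{Z}(R)\rightarrow 0$. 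This gives
$$0\rightarrow\ker(g)\rightarrow Y\rightarrow\Hom_{R\ltimes_nM}((\coprod_{i=1}^{n}M^{\otimes_{R}i},h),[Y,g])\rightarrow\Ext^{1}(\textbf{Z}(R),[Y,g])\rightarrow 0.$$
Using $(\coprod_{i=1}^{n}M^{\otimes_{R}i},h)\cong M\otimes_{R}(\textbf{T}(R)/(0,\dots,0,M^{\otimes_{R}n}))$, as established in Theorem \ref{thm: 4.4}, together with Hom–tensor adjunction, the middle Hom becomes $\Hom_{R}(M,\Hom_{R\ltimes_nM}(\textbf{T}(R)/(0,\dots,M^{\otimes_{R} n}),[Y,g]))\cong\Hom_{R}(M,\ker(g_{n}))$. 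The map $Y\rightarrow\Hom_{R}(M,\ker(g_{n}))$ is then identified with $\beta_{n+1}$ of Lemma \ref{lem: 2.7}(2), where $\ker(g_{n+1})=Y$. Hence $\Ext^{1}=0$ iff $\beta_{n+1}$ is surjective. By Lemma \ref{lem: 2.7}(2) with $i=n+1$, this holds iff the sequence in (1) is exact. The main obstacle is this identification. One must check that $\Hom(\textbf{T}(R)/(0,\dots,M^{\otimes n}),[Y,g])\cong\ker(g_{n})$, i.e.\ elements of $Y$ killed by $M^{\otimes n}$, and that the connecting maps agree with $\beta_{n+1}$ up to isomorphism. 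This is the exact dual of the tensor computation in Theorem \ref{thm: 4.4}, so I will verify it elementwise.

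\emph{(2) $\Rightarrow$ (3).} Take a short exact sequence $0\rightarrow[Y,g]\rightarrow\textbf{H}(E)\rightarrow[L,d]\rightarrow 0$ with $E$ injective, as above. Applying $\textbf{K}$ and using (2) via the four-term sequence above, with $\Hom(\textbf{Z}(R),-)\cong\textbf{K}$, gives an exact sequence $0\rightarrow\ker(g)\rightarrow E\rightarrow\ker(d)\rightarrow 0$. For any $Q$, apply $\Hom(\textbf{Z}(Q),-)$. Adjointness $(\textbf{Z},\textbf{K})$ gives the commutative diagram
$$\Hom(\textbf{Z}(Q),\textbf{H}(E))\rightarrow\Hom(\textbf{Z}(Q),[L,d])\rightarrow\Ext^{1}(\textbf{Z}(Q),[Y,g])\rightarrow 0,$$
which is vertically isomorphic to $\Hom_{R}(Q,E)\rightarrow\Hom_{R}(Q,\ker(d))\rightarrow\Ext^{1}_{R}(Q,\ker(g))\rightarrow 0$. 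The right-hand zeros come from injectivity of $\textbf{H}(E)$ and of $E$. The five lemma then gives $\Ext^{1}_{R\ltimes_nM}(\textbf{Z}(Q),[Y,g])\cong\Ext^{1}_{R}(Q,\ker(g))$.

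\emph{(3) $\Rightarrow$ (2).} Put $Q=R$; then $\Ext^{1}_{R}(R,\ker(g))=0$.
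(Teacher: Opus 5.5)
Your proposal is correct and follows the paper's proof essentially step for step. For (1)$\Leftrightarrow$(2) you apply $\Hom_{R\ltimes_{n}M}(-,[Y,g])$ to $0\rightarrow(\coprod_{i=1}^{n}M^{\otimes_{R}i},h)\rightarrow\textbf{T}(R)\rightarrow\textbf{Z}(R)\rightarrow 0$, identify the third term with $\Hom_{R}(M,\ker(g_{n}))$, and invoke Lemma \ref{lem: 2.7}(2) with $i=n+1$; for (2)$\Rightarrow$(3) you use $0\rightarrow[Y,g]\rightarrow\textbf{H}(E)\rightarrow[L,d]\rightarrow 0$ and the $(\textbf{Z},\textbf{K})$ adjunction, exactly as the paper does.

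The only slip is notational. For left modules the isomorphism you need is $(\coprod_{i=1}^{n}M^{\otimes_{R}i},h)\cong(\textbf{T}(R)/(0,\dots,0,M^{\otimes_{R}n}))\otimes_{R}M$, the mirror of the right-module statement in Theorem \ref{thm: 4.4}. This is exactly the form your Hom--tensor adjunction uses.
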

\begin{proof}(1) $\Leftrightarrow$ (2) By Lemma \ref{lem: 2.6}(1), there is an exact sequence in $R\ltimes_{n}M$-Mod $$0\rightarrow  (\coprod_{i=1}^{n}M^{\otimes_{R}i},h)\rightarrow\textbf{T}(R)\rightarrow\textbf{Z}(R)\rightarrow 0,$$
 which induces the exact sequence $$0\rightarrow\ker(g)\rightarrow Y\rightarrow  \Hom_{R\ltimes_{n}M}((\coprod_{i=1}^{n}M^{\otimes_{R}i},h),[Y,g])\rightarrow \Ext^{1}_{R\ltimes_{n}M}(\textbf{Z}(R),[Y,g])\rightarrow 0.$$
Note that $(\coprod_{i=1}^{n}M^{\otimes_{R}i},h)\cong(\textbf{T}(R)/(0,0,\cdots,M^{\otimes_{R}n}))\otimes_{R}M$. 

Let $g_{n}=\Hom_{R}^{n-1}(M,g)\cdots\Hom_{R}(M,g)g: Y\rightarrow \Hom_{R}^{n}(M,Y)$. Then $$\Hom_{R\ltimes_{n}M}((\coprod_{i=1}^{n}M^{\otimes_{R}i},h),[Y,g])\cong \Hom_{R\ltimes_{n}M}((\textbf{T}(R)/(0,0,\cdots,M^{\otimes_{R}n}))\otimes_{R}M,[Y,g])$$$$\cong \Hom_{R}(M,\ker(g_{n})).$$ So (1) $\Leftrightarrow$ (2) holds by Lemma \ref{lem: 2.7}(2).

(2) $\Rightarrow$ (3) There is an exact sequence in $R\ltimes_{n}M$-Mod $$0\rightarrow[Y,g]\rightarrow\textbf{H}(E)\rightarrow[L,h]\rightarrow 0$$ with $E$ an injective left $R$-module,
which  induces the exact sequence
$$0\rightarrow\Hom_{R\ltimes_{n}M}(\textbf{Z}(R),[Y,g])\rightarrow\Hom_{R\ltimes_{n}M}(\textbf{Z}(R),\textbf{H}(E))$$$$\rightarrow\Hom_{R\ltimes_{n}M}(\textbf{Z}(R),[L,h])
\rightarrow\Ext^{1}_{R\ltimes_{n}M}(\textbf{Z}(R),[Y,g])= 0.$$ Since $(\textbf{Z}, \textbf{K})$ is an adjoint pair, we get the exact sequence
$0\rightarrow\ker(g)\rightarrow E\rightarrow\ker(h)\rightarrow 0.$

For any  left $R$-module $Q$, we get  the following commutative diagram with exact rows: {\small $$\xymatrix{\Hom_{R\ltimes_{n}M}(\textbf{Z}(Q),\textbf{H}(E))\ar[d]_{\cong}\ar[r]&\Hom_{R\ltimes_{n}M}(\textbf{Z}(Q),[L,h])\ar[d]_{\cong}
\ar[r]&\Ext^{1}_{R\ltimes_{n}M}(\textbf{Z}(Q),[Y,g])\ar[d]\ar[r]&0\\\Hom_{R}(Q, E)\ar[r]&\Hom_{R}(Q,\ker(h))\ar[r]&\Ext^{1}_{R}(Q,\ker(g))\ar[r]&0.}$$} Thus $\Ext^{1}_{R\ltimes_{n}M}(\textbf{Z}(Q),[Y,g])\cong \Ext^{1}_{R}(Q,\ker(g)).$

(3) $\Rightarrow (2)$  is clear by letting $Q=R$.
\end{proof}
\begin{thm}\label{thm: 4.8}The following conditions are equivalent for a left $R\ltimes_{n}M$-module $[Y,g]$:
\begin{enumerate}\item $\Ext_{R\ltimes_{n}M}^{i}(\textbf{Z}(R), [Y,g])=0$ for any $i\geq 1$.\item $\Ext^{i}_{R\ltimes_{n}M}(\textbf{Z}(Q),[Y,g])\cong\Ext^{i}_{R}(Q,\ker(g))$ for any left $R$-module $Q$ and  $i\geq 1$.
\end{enumerate}
\end{thm}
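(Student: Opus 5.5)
The plan is to dimension-shift by injective resolutions, in the same way Theorem \ref{thm: 4.5} was deduced from Theorem \ref{thm: 4.4}, now using Theorem \ref{thm: 4.7} as the base case.

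The implication (2) $\Rightarrow$ (1) is immediate. Take $Q=R$ in (2). Since $R$ is a projective left $R$-module, $\Ext^{i}_{R}(R,\ker(g))=0$ for all $i\geq 1$, and hence $\Ext^{i}_{R\ltimes_{n}M}(\textbf{Z}(R),[Y,g])=0$.

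For (1) $\Rightarrow$ (2), the case $i=1$ is exactly Theorem \ref{thm: 4.7}, (2) $\Rightarrow$ (3). For $i\geq 2$:

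1. Since $\textbf{H}(E)$ is injective over $R\ltimes_{n}M$ whenever $E$ is an injective left $R$-module (Corollary \ref{cor: 2.11}(2)), and every module embeds in such an $\textbf{H}(E)$ (use the unit of the adjunction $(\textbf{U},\textbf{H})$ composed with $\textbf{H}$ of an embedding of $Y$ into an injective left $R$-module, as in the proof of Theorem \ref{thm: 4.7}), we can build an exact sequence
$$0\rightarrow[Y,g]\rightarrow\textbf{H}(E_{0})\rightarrow\textbf{H}(E_{1})\rightarrow\cdots\rightarrow\textbf{H}(E_{i-2})\rightarrow[L_{i-1},h_{i-1}]\rightarrow 0$$
with each $E_{k}$ an injective left $R$-module.

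2. Apply $\Hom_{R\ltimes_{n}M}(\textbf{Z}(R),-)$. By the adjunction $(\textbf{Z},\textbf{K})$, this is $\textbf{K}$ on each term, and $\textbf{K}\textbf{H}(E_{k})\cong E_{k}$. Each $\textbf{H}(E_{k})$ is injective and hypothesis (1) holds, so dimension shifting along the short exact pieces gives the exact sequence
$$0\rightarrow\ker(g)\rightarrow E_{0}\rightarrow\cdots\rightarrow E_{i-2}\rightarrow\ker(h_{i-1})\rightarrow 0.$$
At each intermediate cosyzygy $[L_{k},h_{k}]$ we need $\Ext^{1}_{R\ltimes_{n}M}(\textbf{Z}(R),[L_{k},h_{k}])=0$. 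This follows from $\Ext^{1}(\textbf{Z}(R),[L_{k},h_{k}])\cong\Ext^{k+1}(\textbf{Z}(R),[Y,g])=0$. Consequently $\ker(h_{i-1})$ is an $(i-1)$-st cosyzygy of $\ker(g)$ in $R$-Mod.

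3. We also have $\Ext^{1}_{R\ltimes_{n}M}(\textbf{Z}(R),[L_{i-1},h_{i-1}])\cong\Ext^{i}_{R\ltimes_{n}M}(\textbf{Z}(R),[Y,g])=0$. So Theorem \ref{thm: 4.7} applies to $[L_{i-1},h_{i-1}]$ and gives
$$\Ext^{1}_{R\ltimes_{n}M}(\textbf{Z}(Q),[L_{i-1},h_{i-1}])\cong\Ext^{1}_{R}(Q,\ker(h_{i-1})).$$

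4. Combining, with dimension shifting over $R\ltimes_{n}M$ (using injectivity of the $\textbf{H}(E_{k})$) on the left and over $R$ (using the exact sequence from step 2) on the right:
$$\Ext^{i}_{R\ltimes_{n}M}(\textbf{Z}(Q),[Y,g])\cong\Ext^{1}_{R\ltimes_{n}M}(\textbf{Z}(Q),[L_{i-1},h_{i-1}])\cong\Ext^{1}_{R}(Q,\ker(h_{i-1}))\cong\Ext^{i}_{R}(Q,\ker(g)).$$

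The main point requiring care is step 2: the exactness of the $\ker$-sequence at every stage. This relies on the vanishing of $\Ext^{1}(\textbf{Z}(R),-)$ on every cosyzygy, which in turn follows from (1) by dimension shifting.
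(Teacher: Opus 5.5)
Your proposal is correct and follows the paper's proof essentially step for step. Both build a partial resolution $0\rightarrow[Y,g]\rightarrow\textbf{H}(E_0)\rightarrow\cdots\rightarrow\textbf{H}(E_{i-2})\rightarrow[L_{i-1},h_{i-1}]\rightarrow 0$, use (1) and the adjunction $(\textbf{Z},\textbf{K})$ to get the exact sequence of kernels over $R$, apply Theorem \ref{thm: 4.7} to the last cosyzygy, and dimension-shift on both sides; (2) $\Rightarrow$ (1) comes from taking $Q=R$. Your explicit check that $\Ext^{1}_{R\ltimes_{n}M}(\textbf{Z}(R),-)$ vanishes on each intermediate cosyzygy is exactly the point the paper leaves implicit when it says ``By (1)''.
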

\begin{proof}(1) $\Rightarrow (2)$  By Theorem \ref{thm: 4.7}, it is enough to show that $\Ext^{i}_{R\ltimes_{n}M}(\textbf{Z}(Q),[Y,g])\cong\Ext^{i}_{R}(Q,\ker(g))$ for any left $R$-module $Q$ and $i\geq 2$.

There is an exact sequence in $R\ltimes_{n}M$-Mod $$0\rightarrow [Y,g]\rightarrow \textbf{H}(E^{0})\rightarrow \textbf{H}(E^{1})\rightarrow\cdots\rightarrow \textbf{H}(E^{i-2}) \rightarrow [L^{i-1},h^{i-1}]\rightarrow 0$$ with each $E^{k}$ an injective left $R$-module. By (1), we get the exact sequence {\small $$0\rightarrow\Hom_{R\ltimes_{n}M}(\textbf{Z}(R),[Y,g])\rightarrow\Hom_{R\ltimes_{n}M}(\textbf{Z}(R),\textbf{H}(E^{0}))\rightarrow\Hom_{R\ltimes_{n}M}(\textbf{Z}(R),\textbf{H}(E^{1}))$$$$
\rightarrow\cdots\rightarrow\Hom_{R\ltimes_{n}M}(\textbf{Z}(R),\textbf{H}(E^{i-2}))\rightarrow\Hom_{R\ltimes_{n}M}(\textbf{Z}(R),[L^{i-1},h^{i-1}])\rightarrow 0.$$} Since $(\textbf{Z}, \textbf{K})$ is an adjoint pair, we get the exact sequence
$$0\rightarrow\ker(g)\rightarrow E^{0}\rightarrow E^{1}\rightarrow\cdots\rightarrow E^{i-2}\rightarrow\ker(h^{i-1})\rightarrow 0.$$
Note that $\Ext^{1}_{R\ltimes_{n}M}(\textbf{Z}(R),[L^{i-1},h^{i-1}])\cong\Ext^{i}_{R\ltimes_{n}M}(\textbf{Z}(R),[Y,g])=0$. By Theorem \ref{thm: 4.7},
{\small $$\Ext^{i}_{R\ltimes_{n}M}(\textbf{Z}(Q),[Y,g])\cong\Ext^{1}_{R\ltimes_{n}M}(\textbf{Z}(Q),[L^{i-1},h^{i-1}])\cong\Ext^{1}_{R}(Q,\ker(h^{i-1}))\cong\Ext^{i}_{R}(Q,\ker(g)).$$}
(2) $\Rightarrow (1)$  is clear by letting $Q=R$.
\end{proof}
As a consequence of  Theorem \ref{thm: 4.8} and Lemma \ref{lem: 2.6}(3), we have
\begin{cor}\label{cor: 4.9}If $[Y,g]$ is a left $R\ltimes_{n}M$-module such that $\Ext_{R\ltimes_{n}M}^{i}(\textbf{Z}(R), [Y,g])=0$ for any  $i\geq 1$, then $id(_{R\ltimes_{n}M}[Y,g])=id(_{R}\ker(g))$.
\end{cor}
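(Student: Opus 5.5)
We prove $id(_{R\ltimes_{n}M}[Y,g])=id(_{R}\ker(g))$ by establishing the inequality in each direction, using Theorem \ref{thm: 4.8} for one direction and Lemma \ref{lem: 2.6}(3) for the other.

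\emph{Lower bound.} Take $Q$ arbitrary. By Theorem \ref{thm: 4.8}, the hypothesis gives
$$\Ext^{i}_{R}(Q,\ker(g))\cong\Ext^{i}_{R\ltimes_{n}M}(\textbf{Z}(Q),[Y,g])\quad\text{for all } i\geq 1.$$
If $id([Y,g])=m<\infty$, the right side vanishes for $i>m$. Hence $\Ext^{i}_{R}(Q,\ker(g))=0$ for every left $R$-module $Q$ and $i>m$, so $id(\ker(g))\leq id([Y,g])$.

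\emph{Upper bound.} Assume $id(\ker(g))=m<\infty$. It suffices to show $\Ext^{i}_{R\ltimes_{n}M}(N,[Y,g])=0$ for all left $R\ltimes_{n}M$-modules $N$ and $i>m$.

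Write $N=(X,f)$. Lemma \ref{lem: 2.6}(3) gives a finite filtration of $N$ by submodules whose successive quotients are all of the form $\textbf{Z}(Q)$ for suitable left $R$-modules $Q$: the pieces are $\textbf{Z}(\coker(u_k))$ together with the final piece $\textbf{Z}(\im(u_{n-1}))$.

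For each such quotient, Theorem \ref{thm: 4.8} gives, for $i>m$,
$$\Ext^{i}_{R\ltimes_{n}M}(\textbf{Z}(Q),[Y,g])\cong\Ext^{i}_{R}(Q,\ker(g))=0.$$
Now apply the long exact $\Ext(-,[Y,g])$ sequences along the short exact sequences of Lemma \ref{lem: 2.6}(3), inducting from the bottom of the filtration upward. At each step, both outer terms vanish in degrees $i>m$, so the middle term does too. This yields $\Ext^{i}_{R\ltimes_{n}M}((X,f),[Y,g])=0$ for $i>m$, hence $id([Y,g])\leq m$. The argument is a finite dévissage and needs no extra hypotheses.

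\emph{Infinite case.} If either dimension is infinite, the two implications above show the other is infinite as well, so the equality also holds in that case.

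The main point needing care is that the dévissage involves only finitely many steps; this holds because $u_n=0$ in Lemma \ref{lem: 2.6}(3).
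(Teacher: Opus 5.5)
Your proof is correct and follows the paper's intended argument: the paper derives Corollary~\ref{cor: 4.9} directly from Theorem~\ref{thm: 4.8} and Lemma~\ref{lem: 2.6}(3). Theorem~\ref{thm: 4.8} identifies $\Ext^{i}_{R\ltimes_{n}M}(\textbf{Z}(Q),[Y,g])$ with $\Ext^{i}_{R}(Q,\ker(g))$, and Lemma~\ref{lem: 2.6}(3) gives the finite filtration of every $(X,f)$ by modules of the form $\textbf{Z}(Q)$; these are exactly the two-sided comparison and the finite d\'evissage you carry out.
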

\bigskip
\bigskip
\centerline {\bf ACKNOWLEDGEMENTS}
\bigskip
This research was supported by NSFC (12271249). The author wants to
express his gratitude to the referee for the very helpful comments
and suggestions.

\end{document}